\newtheorem{theorem}{Theorem}[section]
\newtheorem{lemma}{Lemma}[section]
\newtheorem{definition}{Definition}[section]
\newtheorem{example}{Example}[section]
\newtheorem{proposition}{Proposition}[section]
\newtheorem{remark}{Remark}[section]
\newtheorem{alemma}{Lemma}
\newenvironment{proof}{{\noindent \bf Proof:}}{\hfill$\Box$\medskip}
\definecolor{lred}{rgb}{1,0.8,0.8}
\definecolor{lblue}{rgb}{0.8,0.8,1}
\definecolor{dred}{rgb}{0.6,0,0}
\definecolor{dblue}{rgb}{0,0,0.5}
\definecolor{dgreen}{rgb}{0,0.5,0.5}
 \title{Calmness of partial perturbation to composite rank constraint systems and its applications}
\author{Yitian Qian\footnote{School of Mathematics, South China University of Technology, Guangzhou.},\ \
 Shaohua Pan\footnote{(shhpan@scut.edu.cn) School of Mathematics, South China University of Technology, Guangzhou.}\ \ {\rm and}\ \
 Yulan Liu\footnote{School of Applied Mathematics, Guangdong University of Technology, Guangzhou.}}
\begin{document}

 \maketitle

 \begin{abstract}
  This paper is concerned with the calmness of a partial perturbation to the composite rank
  constraint system, an intersection of the rank constraint set and a general closed set,
  which is shown to be equivalent to a local Lipschitz-type error bound and also a global
  Lipschitz-type error bound under a certain compactness. Based on its lifted formulation,
  we derive two criteria for identifying those closed sets such that the associated partial
  perturbation possesses the calmness, and provide a collection of examples to
  demonstrate that the criteria are satisfied by common nonnegative and positive
  semidefinite rank constraint sets. Then, we use the calmness of this perturbation
  to obtain several global exact penalties for rank constrained optimization
  problems, and a family of equivalent DC surrogates for rank regularized problems.
 \end{abstract}

 \noindent
 {\bf Keywords:}\ Composite rank constraint systems; calmness; error bound; exact penalty

 \noindent
 {\bf AMS:} 90C31; 54C60; 49K40
 \section{Introduction}\label{sec1}

 Let $\mathbb{R}^{n\times m}$ and $\mathbb{S}^{n}$ respectively denote the space of
 all $n\times m\,(n\le m)$ real matrices and the space of all $n\times n$ real
 symmetric matrices, which are equipped with the trace inner product
 $\langle\cdot,\cdot\rangle$ and its induced Frobenius norm $\|\cdot\|_F$;
 and let $\mathbb{X}$ represent $\mathbb{R}^{n\times m}$ or $\mathbb{S}^{n}$.
 Fix any integer $r\in\{1,2,\ldots,n\}$. Consider the rank constrained optimization problem
 \begin{equation}\label{rank-constr}
  \min_{X\in\Omega}\Big\{f(X)\ \ {\rm s.t.}\ {\rm rank}(X)\le r\Big\}
 \end{equation}
 where $\Omega\subset\mathbb{X}$ is a closed set and $f\!:\mathbb{X}\to(-\infty,+\infty]$
 is a lower bounded function. This model is suitable for the scenario where $r$ is a tight
 upper estimation for the rank of the target matrix. If such $r$ is unavailable,
 one would prefer the rank regularized model
 \begin{equation}\label{rank-reg}
  \min_{X\in\Omega}\Big\{f(X)+\nu{\rm rank}(X)\Big\}
 \end{equation}
 where $\nu>0$ is the regularization parameter. Throughout this paper, we assume that
 the set $\Gamma_{\!r}:=\big\{X\!\in\Omega\,|\,{\rm rank}(X)\le r\big\}$ is nonempty,
 and $f$ is coercive and locally Lipschitz continuous on the set $\Omega$.
 The coercive assumption of $f$ on the set $\Omega$ is very mild and
 guarantees that problem \eqref{rank-constr} has a nonempty solution set. In fact,
 when $f$ does not satisfy this assumption, one may consider replacing $f$ with
 $f+\frac{\mu}{2}\|\cdot\|_F^2$ for a tiny $\mu>0$.

 Models \eqref{rank-constr} and \eqref{rank-reg} have a host of applications
 in statistics \cite{Negahban11}, system identification and control \cite{Fazel02},
 finance \cite{Pietersz04}, machine learning \cite{Kulis09,Hajek16}, and quantum tomography \cite{Gross11}.
 In particular, model \eqref{rank-constr} with $\mathbb{X}=\mathbb{S}^n$ frequently
 arises from the positive semidefinite (PSD) relaxations for combinational and
 graph optimization problems (see, e.g., \cite{Goemans95,Helmberg98,Dukanovic07}).
 Note that ${\rm rank}(X)\le r$ if and only if $\|X\|_*-\|X\|_{(r)}=0$, where $\|\cdot\|_*$
 and $\|\cdot\|_{(r)}$ denote the nuclear norm and the Ky-Fan $r$-norm of matrices, respectively.
 We are interested in the calmness of the following perturbation to the composite rank constraint
 system $\Gamma_{\!r}$:
 \begin{equation}\label{MSr}
  \mathcal{S}_{r}(\tau):=\big\{X\!\in\Omega\,|\, \|X\|_*-\|X\|_{(r)}=\tau\big\}
  \quad{\rm for}\ \tau\in\mathbb{R}.
 \end{equation}

 Clearly, ${\rm dom}\mathcal{S}_{r}:=\{\tau\in\mathbb{R}\,|\,\mathcal{S}_{r}(\tau)\ne\emptyset\}\subseteq\mathbb{R}_{+}$
 because $\|X\|_*\ge\|X\|_{(r)}$ for all $X\in\mathbb{X}$.

 Motivated by the fact that the difference-of-convex (DC) algorithm has been extensively
 explored (see, e.g., \cite{LeThi18,Horst99}), when the set $\Omega$ does not have a simple
 structure (say, the projection onto $\Omega$ has no closed form), it is natural to deal with
 problem \eqref{rank-constr} by penalizing the DC reformulation $\|X\|_*\!-\!\|X\|_{(r)}=0$ of
 the rank constraint, and then develop effective algorithms for solving
 the obtained DC penalized problem
 \begin{equation}\label{epenalty-constr}
  \min_{X\in\Omega}\Big\{f(X)+\rho\big[\|X\|_*\!-\!\|X\|_{(r)}\big]\Big\}
 \end{equation}
 or the factorized form of the penalized problem \eqref{epenalty-constr}, where $\rho>0$ is the penalty parameter. 
 As far as we know, the idea to penalize the DC reformulation of
 the rank constraint first appeared in the technical report \cite{GS-Major}.
 Recently, for the quadratic assignment problem, Jiang et al. \cite{JiangZD21} developed
 a proximal DC approach by the penalized problem \eqref{epenalty-constr} of its equivalent
 rank constrained doubly nonnegative reformulation; and for the unconstrained binary polynomial program,
 Qian and Pan \cite{QianPan21} developed a relaxation approach by the factorized formulation
 of the penalized problem \eqref{epenalty-constr} of its equivalent PSD program.
 The encouraging numerical results in \cite{JiangZD21,QianPan21} inspire us to
 explore the relation between global (or local) optimal solutions of the penalized
 problem \eqref{epenalty-constr} and those of the origin problem \eqref{rank-constr} for
 more closed sets $\Omega$. As will be shown in Section \ref{sec4.1}, the calmness of
 $\mathcal{S}_{r}$ is the key to achieve the goal. This is a motivation
 for us to study the calmness of $\mathcal{S}_{r}$ at $0$.

 Another motivation for studying the calmness of $\mathcal{S}_{r}$ at $0$ is to derive
 equivalent DC surrogates for the rank regularized problem \eqref{rank-reg}.
 It is well known that nonconvex surrogate methods are more effective than
 the nuclear norm convex surrogate method (see, e.g., \cite{Mohan12,BiPan17,Lu14}).
 Take into account that the efficiency of some nonconvex surrogates, such as the Schatten $p$-norm
 \cite{Lai13,Mohan12} and the log-determinant \cite{Fazel03}, depends on their
 approximation level to the rank function. The authors in \cite{LiuBiPan18} derived
 a class of equivalent DC surrogates by the uniformly partial calmness
 of the MPEC reformulation of \eqref{rank-reg}, which includes the matrix version of
 the popular SCAD \cite{Fan01} and MCP \cite{ZhangCH10} surrogates. However,
 the assumption there (see \cite[Theorem 4.2]{LiuBiPan18}) is very restrictive on
 the set $\Omega$ and it may not hold even for a closed ball on the elementwise
 norm of matrices. Then, it is natural to ask if there is a practical criterion
 for identifying more classes of $\Omega$ to obtain such surrogates.

 The last but not least one is to characterize the normal cone to the set $\Gamma_{\!r}$,
 which plays a significant role in deriving the optimality conditions of \eqref{rank-constr}
 (see \cite{LiXiu20}) and verifying the KL property of exponent $1/2$ for its extended objective function.
 Indeed, the two tasks involve the characterization on the normal cone to $\Gamma_{\!r}$.
 By \cite[Section 3.1]{Ioffe08}, the calmness of $\mathcal{S}_{r}$ at $0$ or the
 equivalent metric qualification is enough to achieve an upper inclusion for the normal
 cone to $\Gamma_{\!r}$ in terms of the normal cones to $\Omega$ and the rank constraint set.

 The Aubin property of $\mathcal{S}_{r}$ at $0$ implies its calmness at $0$,
 but one can check by the Mordukhovich criterion \cite{Mordu84} (see also \cite[Theorem 9.40]{RW98})
 that the Aubin property of $\mathcal{S}_{r}$ at $0$ does not hold. In the past few decades,
 there have been a large number of research works on the calmness of a multifunction
 or equivalently the subregularity of its inverse mapping (see, e.g., \cite{BaiYe19,Henrion05,DR09,Gfrerer11,Zheng10})
 and the closely related error bounds of a general lsc function (see, e.g., \cite{Fabian10,Meng12,Ngai09,Kruger15,WuYe03}).
 A collection of criteria have been proposed in these literatures for identifying the calmness of a multifunction,
 but most of them are neighborhood-type and to check if they hold or not
 is not an easy task for a specific $\Omega$.
 One contribution of this work is to present two practical criteria for identifying
 those closed $\Omega$ such that the associated perturbation $\mathcal{S}_{r}$ is
 calm at $0$ for any $X\!\in\mathcal{S}_{r}(0)$; see Section \ref{sec3.1}.
 Although our criteria are stronger than those coming from the above works,
 they are point-type and as will be illustrated in Section \ref{sec3.2},
 they hold for many common nonnegative and PSD composite rank constraint systems.
 Interestingly, the two criteria are precisely the linear regularity of $\Gamma_{r}$
 when it is regarded as an intersection of the set $\Omega$ and the rank constraint set
 or an intersection of a closed set and the positive semidefinite rank constraint set.
 Liner regularity of collections of sets was earliest introduced in \cite{Mordu84} as
 the generalized nonseparation property, and was recently employed in \cite{Lewis09}
 to achieve the local linear convergence for alternating and averaged nonconvex projections.
 For more discussions on the linear regularity of collections of sets, refer to \cite{Kruger06,Ng07}.

 As will be shown in Section \ref{sec3.1}, the calmness of $\mathcal{S}_{r}$ at $0$
 for any $X\in\mathcal{S}_{r}(0)$ is equivalent to a local Lipschitz-type error bound
 and also a global Lipschitz-type error bound under the compactness of $\Omega$.
 To the best of our knowledge, few works discuss the error bounds for rank constrained
 optimization problems except \cite{BiPan16,LiuLu20}. In \cite{BiPan16} the error bound
 was obtained for $\Gamma_{\!r}$ only involving three special $\Omega$ by constructing
 a feasible point technically, while in \cite{LiuLu20} the error bound was established
 only for the spectral norm unit ball $\Omega$. The two papers did not provide a criterion
 to identify the set $\Omega$ such that $\Gamma_{\!r}$ has this property.

 The other contribution of this work is to apply the calmness of $\mathcal{S}_{r}$
 to establishing several classes of global exact penalties for the rank constrained
 problem \eqref{rank-constr}, and deriving a family of equivalent DC surrogates for
 the rank regularized problem \eqref{rank-reg}. For the former, we show that
 the penalized problem \eqref{epenalty-constr}, the Schatten $p$-norm penalized problem,
 and the truncated difference penalized problem of $\|\cdot\|_*$ and $\|\cdot\|_F$
 are all the global exact penalty for problem \eqref{rank-constr},
 which not only generalizes the exact penalty result of \cite{LiuLu20} to more types of $\Omega$,
 but also first verifies the exact penalization for the truncated difference of
 $\|\cdot\|_*$ and $\|\cdot\|_F$ introduced in \cite{MaTH17}. For the latter, we greatly
 improve the result of \cite[Theorem 4.2]{LiuBiPan18} by weakening the restriction
 there on the set $\Omega$; see Section \ref{sec4.2}.

 \section{Notation and preliminaries}\label{sec2}

 Throughout this paper, for each $r\in\{1,2,\ldots,n\}$, we write
 $\Lambda_{r}:=\big\{X\in\mathbb{X}\,|\,{\rm rank}(X)\le r\big\}$ and
 $\Lambda_{r}^{\!+}\!:=\big\{X\!\in\mathbb{S}_{+}^n\,|\,{\rm rank}(X)\le r\big\}$.
 The notation $\mathbb{O}^{n}$ represents the set of all $n\times n$ matrices
 with orthonormal columns, and $I$ and $e$ denote an identity matrix
 and a vector of all ones, respectively, whose dimensions are known from the context.
 For a given $X\in\mathbb{X}$, $\lambda(X)=(\lambda_1(X),\ldots,\lambda_n(X))^{\mathbb{T}}$
 and $\sigma(X)=(\sigma_1(X),\ldots,\sigma_n(X))^{\mathbb{T}}$ denote
 the eigenvalue value and singular value vectors of $X$ arranged in a nonincreasing order.
 For $X\in\mathbb{S}^n$,
 $\mathbb{O}^{n}(X):=\{P\in\mathbb{O}^{n}\,|\, X=P{\rm Diag}(\lambda(X))P^{\mathbb{T}}\}$.
 For a closed set $C\subset\mathbb{X}$, $\Pi_{C}$ denotes the projection mapping
 onto $C$, and for a given $X\in\mathbb{X}$, if $\Pi_C(X)$ is non-unique,
 then $\Pi_C(X)$ denotes any point chosen from this set; ${\rm dist}(X,C)$ means
 the distance from $X$ to the set $C$ in terms of the Frobenius norm; and $\delta_C$
 represents the indicator function of $C$, i.e., $\delta_C(x)=0$ if $x\in C$
 and $+\infty$ otherwise. The notation $\mathbb{B}(\overline{X},\delta)$ denotes
 a closed ball of radius $\delta>0$ centered at $\overline{X}$ with interior
 denoted by $\mathbb{B}^{\circ}(\overline{X},\delta)$, and $\mathbb{B}_{\mathbb{X}}$
 means the unit ball in $\mathbb{X}$. For a linear operator
 $\mathcal{A}\!:\mathbb{X}\to\mathbb{R}^p$, the notation $\mathcal{A}^*$
 denotes its adjoint mapping.
 \subsection{Calmness and subregularity}\label{sec2.1}

  The notion of calmness of a multifunction was first introduced in \cite{Ye97}
  under the term ``pseudo upper-Lipschitz continuity'' owing to the fact that
  it is a combination of Aubin's pseudo-Lipschitz continuity
  and Robinson's upper-Lipschitz continuity \cite{Robinson81}, and the term
 ``calmness'' was later coined in \cite{RW98}. Let $\mathbb{Y}$ and $\mathbb{Z}$
  respectively represent a finite dimensional real vector space equipped with
  the inner product $\langle \cdot,\cdot\rangle$ and its induced norm $\|\cdot\|$.
  A multifunction $\mathcal{M}\!:\mathbb{Y}\rightrightarrows\mathbb{Z}$ is said to
  be calm at $\overline{y}$ for $\overline{z}\in\mathcal{M}(\overline{y})$ if
  there exists a constant $\gamma\ge0$ together with $\varepsilon>0$ and $\delta>0$
  such that for all $y\in\mathbb{B}(\overline{y},\varepsilon)$,
  \begin{equation}\label{calm-def}
   \mathcal{M}(y)\cap\mathbb{B}(\overline{z},\delta)
   \subseteq\mathcal{M}(\overline{y})+\gamma\|y-\overline{y}\|\mathbb{B}_{\mathbb{Z}}.
  \end{equation}
 By \cite[Exercise 3H.4]{DR09}, the neighborhood restriction $\mathbb{B}(\overline{y},\varepsilon)$
 on $y$ in \eqref{calm-def} can be removed. As observed by Henrion and Outrata \cite{Henrion05},
 the calmness of $\mathcal{M}$ at $\overline{y}$ for $\overline{z}\in\mathcal{M}(\overline{y})$
 is equivalent to the (metric) subregularity of its inverse at $\overline{z}$ for
 $\overline{y}\in\mathcal{M}^{-1}(\overline{z})$. Subregularity was introduced
 by Ioffe in \cite{Ioffe79} (under a different name) as a constraint qualification
 related to equality constraints in nonsmooth optimization problems, and was later
 extended to generalized equations. Recall that a multifunction
 $\mathcal{F}\!:\mathbb{Z}\rightrightarrows\mathbb{Y}$ is called (metrically) subregular
 at $\overline{z}$ for $\overline{y}\in\mathcal{F}(\overline{z})$ if there exist
 a constant $\kappa\ge 0$ along with $\varepsilon>0$ such that
 \begin{equation}\label{subregular}
  {\rm dist}(z,\mathcal{F}^{-1}(\overline{y}))\le \kappa{\rm dist}(\overline{y},\mathcal{F}(z))
  \quad{\rm for\ all}\ z\in\mathbb{B}(\overline{z},\varepsilon).
 \end{equation}
 The calmness and subregularity have already been studied by many authors under
 various names (see, e.g., \cite{Henrion02,Henrion05,Ioffe08,Gfrerer11,DR09,Zheng10}
 and the references therein).
\subsection{Normal and tangent cones}\label{sec2.2}

 Let $S\subseteq\mathbb{X}$ be a closed set. The Fr\'{e}chet (regular) normal cone to $S$
 at $\overline{x}\in S$ is defined as
 \[
   \widehat{\mathcal{N}}_{S}(\overline{x})
   :=\bigg\{v\in\mathbb{X}\,|\,\limsup_{\overline{x}\ne x\xrightarrow[S]{}\overline{x}}
    \frac{\langle v,x-\overline{x}\rangle}{\|x-\overline{x}\|}\le 0\bigg\},
 \]
 and the limiting (also called Mordukhovich) normal cone to $S$ at $\overline{x}$ is defined by
 \[
   \mathcal{N}_{S}(\overline{x}):=\Big\{v\in\mathbb{X}\,|\,\exists x^k\xrightarrow[S]{}\overline{x},
   v^k\to v\ {\rm with}\ v^k\in\widehat{\mathcal{N}}_{S}(x^k)\ {\rm for\ all}\ k\Big\}.
 \]
 The tangent (also called Bouligand or contingent)
 cone to $S$ at $\overline{x}$ is defined as
 \[
   \mathcal{T}_{S}(\overline{x}):=\Big\{u\in\mathbb{X}\,|\,\exists u^k\to u,t_k\downarrow 0\ {\rm such\ that}\
   \overline{x}+t_ku^k\in S\ {\rm for\ all}\ k\Big\}.
 \]
 The following lemmas provide the characterization on the normal cone
 to $\mathbb{S}_{+}^n$ and $\Lambda_{r}$.
 \begin{lemma}\label{lemma21}
 (see \cite[Example 2.65]{BS00}) Fix any $X\!\in\mathbb{S}_{+}^n$
  with ${\rm rank}(X)\!=k$. Let $X$ have the eigenvalue decomposition as
 $P{\rm Diag}(\lambda(X))P^{\mathbb{T}}$ with $P\in\mathbb{O}^n$,
 and let $P_1$ be the submatrix consisting of the first $k$ columns of $P$. Then,
 $\mathcal{N}_{\mathbb{S}_{+}^n}(X)=\big\{W\in\mathbb{S}_{-}^n\,|\,P_1^{\mathbb{T}}WP=0\big\}$.
 \end{lemma}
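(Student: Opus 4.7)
The plan is to reduce the statement to the classical complementarity characterization of normal cones to the PSD cone, and then translate that complementarity condition into the claimed spectral identity. First, since $\mathbb{S}_{+}^n$ is a closed convex cone, the limiting normal cone agrees with the standard convex-analytic normal cone. Because $\mathbb{S}_{+}^n$ is self-dual under the trace inner product, the standard formula for the normal cone to a closed convex cone at a feasible point yields
\[
\mathcal{N}_{\mathbb{S}_{+}^n}(X)=\big(\mathbb{S}_{+}^n\big)^{\circ}\cap\{X\}^{\perp}=\big\{W\in\mathbb{S}_{-}^n\,|\,\langle W,X\rangle=0\big\}.
\]
So the whole task reduces to showing that, under $W\preceq 0$ and $X\succeq 0$ with ${\rm rank}(X)=k$, the complementarity condition $\langle W,X\rangle=0$ is equivalent to the spectral condition $P_1^{\mathbb{T}}WP=0$.

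Next, I would use the partial eigendecomposition $X=P_1\Sigma_1 P_1^{\mathbb{T}}$, where $\Sigma_1:={\rm Diag}(\lambda_1(X),\ldots,\lambda_k(X))\succ 0$. Cyclicity of the trace gives
\[
\langle W,X\rangle={\rm tr}\!\left(\Sigma_1^{1/2}P_1^{\mathbb{T}}WP_1\Sigma_1^{1/2}\right).
\]
Since $P_1^{\mathbb{T}}WP_1\preceq 0$, the conjugated matrix $\Sigma_1^{1/2}P_1^{\mathbb{T}}WP_1\Sigma_1^{1/2}$ is negative semidefinite, and a negative semidefinite matrix of zero trace must vanish. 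Invertibility of $\Sigma_1^{1/2}$ then forces $P_1^{\mathbb{T}}WP_1=0$.

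The slightly more delicate step, which I expect to be the only real obstacle, is to upgrade $P_1^{\mathbb{T}}WP_1=0$ to the full relation $P_1^{\mathbb{T}}WP=0$, i.e.\ to show that the off-diagonal block $P_1^{\mathbb{T}}WP_2$ also vanishes. I would write $P^{\mathbb{T}}WP$ in the block partition induced by $P=[P_1\ P_2]$; its upper-left $k\times k$ block is zero, and the whole matrix remains negative semidefinite. Testing the corresponding quadratic form on vectors of the form $(u,\varepsilon v)$ and letting $\varepsilon\to 0^+$ forces $u^{\mathbb{T}}(P_1^{\mathbb{T}}WP_2)v=0$ for all $u,v$, and hence $P_1^{\mathbb{T}}WP_2=0$, as desired.

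Finally, the converse direction is immediate: if $P_1^{\mathbb{T}}WP=0$ and $W\preceq 0$, then in particular $P_1^{\mathbb{T}}WP_1=0$, whence $\langle W,X\rangle={\rm tr}(\Sigma_1 P_1^{\mathbb{T}}WP_1)=0$, so $W$ lies in the complementarity description of $\mathcal{N}_{\mathbb{S}_{+}^n}(X)$ established in the first step. Combining the two inclusions yields the lemma. No tools beyond convex duality for self-dual cones and elementary block-matrix positive semidefinite algebra are needed.
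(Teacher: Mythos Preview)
Your argument is correct. Note, however, that the paper does not supply its own proof of this lemma: it is stated with a citation to \cite[Example~2.65]{BS00} and used as a known background fact. So there is no ``paper's proof'' to compare against.

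As a self-contained derivation your route is the standard one: reduce to the complementarity description $\mathcal{N}_{\mathbb{S}_{+}^n}(X)=\{W\in\mathbb{S}_{-}^n\mid\langle W,X\rangle=0\}$ via self-duality of the PSD cone, then translate $\langle W,X\rangle=0$ into the block condition. The step you flagged as ``slightly more delicate''---forcing the off-diagonal block $P_1^{\mathbb{T}}WP_2$ to vanish from $P_1^{\mathbb{T}}WP_1=0$ and $P^{\mathbb{T}}WP\preceq 0$---is handled correctly by your quadratic-form test on $(u,\varepsilon v)$; an equivalent (and perhaps cleaner) way to phrase it is that for a negative semidefinite block matrix whose leading principal block is zero, the corresponding off-diagonal block must vanish, since the linear-in-$t$ expression $2t\,u^{\mathbb{T}}Bv+v^{\mathbb{T}}Cv\le 0$ for all $t\in\mathbb{R}$ forces $u^{\mathbb{T}}Bv=0$. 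Either formulation closes the argument without gaps.
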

 \begin{lemma}\label{Ncone-MRr}
  (see \cite[Proposition 3.6]{Luke13})
  Fix any $r\in\{1,2,\ldots,r\}$. Consider any $X\!\in\Lambda_r$ with the SVD as
  $U{\rm Diag}(\sigma(X))V^{\mathbb{T}}$ and write $\beta\!:=\big\{i\,|\,\sigma_i(X)=0\big\}$.
  If ${\rm rank}(X)=r$, then
  \(
   \widehat{\mathcal{N}}_{\Lambda_r}(X)
   =\mathcal{N}_{\Lambda_r}(X)=\big\{U_{\!\beta}HV_{\!\beta}^{\mathbb{T}}\,|\, H\in\mathbb{R}^{|\beta|\times|\beta|}\big\};
  \)
 and if ${\rm rank}(X)\!<r$, then it holds that
 $\widehat{\mathcal{N}}_{\Lambda_r}(X)\!=\{0\}\subset\mathcal{N}_{\Lambda_r}(X)
 \!=\!\big\{W\in\mathbb{X}\,|\,{\rm rank}(W)\!\le n\!-r\big\}
 \cap\big\{U_{\beta}HV_{\!\beta}^{\mathbb{T}}\,|\,H\in\mathbb{R}^{|\beta|\times|\beta|}\big\}$.
 \end{lemma}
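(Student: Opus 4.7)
My plan is to obtain both normal cones from the contingent/tangent cone $\mathcal{T}_{\Lambda_{r}}(X)$, appealing to the identity $\widehat{\mathcal{N}}_{\Lambda_{r}}(X)=\mathcal{T}_{\Lambda_{r}}(X)^{\circ}$ for Fr\'echet normals and a standard sequential closure argument for limiting normals. I would first compute the tangent cone via a local SVD parametrization. Writing $k={\rm rank}(X)$ and splitting $U=[U_{\alpha},U_{\beta}]$, $V=[V_{\alpha},V_{\beta}]$ with $\alpha=\{1,\ldots,k\}$ and the given $\beta=\{k+1,\ldots,n\}$, I would block-decompose
\[
U^{\mathbb{T}}HV=\begin{pmatrix}A & B\\ C & D\end{pmatrix}
\]
for each direction $H\in\mathbb{X}$, with $D=U_{\beta}^{\mathbb{T}}HV_{\beta}$. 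Since ${\rm Diag}(\sigma_{\alpha}(X))+tA$ is invertible for small $t>0$, the Schur complement yields ${\rm rank}(X+tH)=k+{\rm rank}\!\big(tD+O(t^{2})\big)$; passing to the limit along curves $X+tH+o(t)\in\Lambda_{r}$ produces
\[
\mathcal{T}_{\Lambda_{r}}(X)=\Big\{H\in\mathbb{X}\,\big|\,{\rm rank}(U_{\beta}^{\mathbb{T}}HV_{\beta})\le r-k\Big\}.
\]

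\textbf{Case ${\rm rank}(X)=r$.} Here $r-k=0$, so the tangent cone collapses to the linear subspace $\{H:U_{\beta}^{\mathbb{T}}HV_{\beta}=0\}$, whose orthogonal complement is exactly $\{U_{\beta}GV_{\beta}^{\mathbb{T}}\,|\,G\in\mathbb{R}^{|\beta|\times|\beta|}\}$. Moreover, the Schur complement representation shows that $\Lambda_{r}\cap\mathbb{B}^{\circ}(X,\delta)$ coincides with the smooth manifold $\{H:D=C({\rm Diag}(\sigma_{\alpha}(X))+A)^{-1}B\}$ for $\delta$ small, so $\Lambda_{r}$ is regular at $X$ and the Fr\'echet and limiting normal cones both equal the stated subspace.

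\textbf{Case ${\rm rank}(X)<r$.} Now $r-k\ge 1$, so the tangent cone contains the entire subspace $\{H:D=0\}$ together with all matrices whose only nonzero block is a rank-one $D$; these together linearly span $\mathbb{X}$, so their polar is $\{0\}$, giving $\widehat{\mathcal{N}}_{\Lambda_{r}}(X)=\{0\}$. For the limiting cone I would approximate $X$ by $X^{j}:=X+\varepsilon_{j}U_{\beta}MV_{\beta}^{\mathbb{T}}$ with $\varepsilon_{j}\downarrow 0$ and $M\in\mathbb{R}^{|\beta|\times|\beta|}$ of rank $r-k$, so that ${\rm rank}(X^{j})=r$, and apply Case 1 at each $X^{j}$. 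Every Fr\'echet normal $W^{j}=U^{j}_{\beta^{j}}H^{j}(V^{j}_{\beta^{j}})^{\mathbb{T}}$ then has rank at most $n-r$; orthonormality of $[U^{j}_{\alpha^{j}},U^{j}_{\beta^{j}}]$ combined with $X^{j}\to X$ forces, along a subsequence, the columns of $U^{j}_{\beta^{j}}$ and $V^{j}_{\beta^{j}}$ into ${\rm span}(U_{\beta})$ and ${\rm span}(V_{\beta})$, so any limit $W$ has the form $U_{\beta}GV_{\beta}^{\mathbb{T}}$ with ${\rm rank}(W)\le n-r$ by lower semicontinuity of rank. Conversely, for any such target $W=U_{\beta}GV_{\beta}^{\mathbb{T}}$, I would choose $M$ of rank $r-k$ whose column (resp.\ row) space is orthogonal to that of $G$ (resp.\ $G^{\mathbb{T}}$), which is possible precisely because ${\rm rank}(G)\le n-r=|\beta|-(r-k)$, ensuring $W\in\widehat{\mathcal{N}}_{\Lambda_{r}}(X^{j})$ for every $j$.

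The main obstacle I expect is the rigorous tangent-cone identity, specifically uniform control of the $O(t^{2})$ correction and the exclusion of curved arcs $t\mapsto X+tH+o(t)\in\Lambda_{r}$ along directions with ${\rm rank}(U_{\beta}^{\mathbb{T}}HV_{\beta})>r-k$. This is typically handled via an Eckart--Young argument applied to the Schur complement on the $(\beta,\beta)$-block after block-diagonalization; once that identity is in hand, the rest reduces to polar-cone arithmetic and a routine semicontinuous passage to the limit.
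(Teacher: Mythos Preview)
The paper does not supply its own proof of this lemma; it is quoted from \cite[Proposition 3.6]{Luke13} and used as a black box. So there is no in-paper argument to compare against, and your proposal stands as a self-contained derivation of a result the authors import.

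Your route---tangent cone via a Schur-complement reduction, then polarity for the Fr\'echet cone, then sequential closure for the limiting cone---is the standard one and is essentially how the cited reference proceeds. The rank-$r$ case is clean: near such a point $\Lambda_r$ is a smooth manifold, so Clarke regularity holds and both normal cones equal the stated linear subspace. Your identification of the tangent-cone identity as the technical crux is accurate; the Eckart--Young/Schur argument you sketch is the right tool, and once $\mathcal{T}_{\Lambda_r}(X)=\{H:\,{\rm rank}(U_\beta^{\mathbb T}HV_\beta)\le r-k\}$ is in hand the rest is bookkeeping.

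In the rank-deficient case your computation of $\widehat{\mathcal{N}}_{\Lambda_r}(X)=\{0\}$ is correct (the tangent cone is symmetric under $H\mapsto -H$ and linearly spans $\mathbb{X}$, so its polar is trivial). However, your treatment of $\mathcal{N}_{\Lambda_r}(X)$ conflates the two inclusions. You introduce the \emph{specific} sequence $X^{j}=X+\varepsilon_jU_\beta MV_\beta^{\mathbb T}$ and then assert that ``any limit $W$'' of Fr\'echet normals along it lies in the target set; but the inclusion $\mathcal{N}_{\Lambda_r}(X)\subseteq\{\cdot\}$ must be established for \emph{every} sequence $X^{j}\to X$ in $\Lambda_r$ with $W^{j}\in\widehat{\mathcal{N}}_{\Lambda_r}(X^{j})$, not only for the constructed one. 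The repair is short: for an arbitrary such sequence with ${\rm rank}(X^{j})=r$, pass to a subsequence so that $U^{j}_{\beta^{j}}\to P$; from $(U^{j}_{\beta^{j}})^{\mathbb T}X^{j}=0$ one gets $P^{\mathbb T}X=0$, whence the columns of $P$ lie in ${\rm span}(U_\beta)$, and similarly on the $V$ side, while ${\rm rank}(W)\le n-r$ follows from lower semicontinuity. Sequences with ${\rm rank}(X^{j})<r$ contribute only $W=0$, since you have already shown the Fr\'echet cone vanishes there. Your ``Conversely'' paragraph, with $M$ chosen so that its column and row spaces avoid those of $G$, then correctly gives the reverse inclusion. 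So the ingredients are all present; you just need to separate the two directions and make explicit that the $\subseteq$ argument runs over arbitrary approximating sequences.
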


 By Lemma \ref{lemma21} and \ref{Ncone-MRr}, when $X\!\in\mathbb{S}_{+}^n$,
 it is not hard to verify that $\mathcal{N}_{\mathbb{S}_{+}^n}(X)
 \subseteq\mathcal{N}_{\Lambda_r}(X)$, which implies that
 $\mathcal{N}_{\mathbb{S}_{+}^n}(X)+\mathcal{N}_{\Lambda_r}(X)=\mathcal{N}_{\Lambda_r}(X)$.
 The following lemma provides a characterization on the normal cone to
 the composite set $\Lambda_{r}^{\!+}$.
 \begin{lemma}\label{Ncone-MRr+}
  Fix any $r\in\{1,2,\ldots,n\}$. Consider any point $X\in\Lambda_{r}^{\!+}$. If ${\rm rank}(X)=r$,
  then $\widehat{\mathcal{N}}_{\Lambda_{r}^{\!+}}(X)\!=\mathcal{N}_{\Lambda_{r}^{\!+}}(X)
    =\mathcal{N}_{\Lambda_{r}}(X)$;
  and if ${\rm rank}(X)<r$, then it holds that
  \begin{equation*}
    \mathcal{N}_{\mathbb{S}_{+}^n}(X)\subseteq
   \widehat{\mathcal{N}}_{\Lambda_{r}^{\!+}}(X)\subseteq
   \mathcal{N}_{\Lambda_{r}^{\!+}}(X)\subseteq
   \mathcal{N}_{\mathbb{S}_{+}^n}(X)+\mathcal{N}_{\Lambda_{r}}(X).
  \end{equation*}
 \end{lemma}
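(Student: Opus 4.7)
The plan is to split according to whether ${\rm rank}(X)$ equals $r$ or is strictly less, and within the rank-deficient case to prove the right inclusion by a sequential argument that further bifurcates on the rank of the approximating sequence.

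For ${\rm rank}(X)=r$, I would first show that $\Lambda_r^{\!+}$ and $\Lambda_r$ coincide in a neighborhood of $X$. By continuity of the eigenvalue map, any $X'$ sufficiently close to $X$ has $\lambda_1(X'),\ldots,\lambda_r(X')$ close to $\lambda_1(X),\ldots,\lambda_r(X)>0$ and hence strictly positive; if additionally ${\rm rank}(X')\le r$, then $\lambda_{r+1}(X')=\cdots=\lambda_n(X')=0$, so $X'\in\mathbb{S}_{+}^n$ and thus $X'\in\Lambda_r^{\!+}$. Consequently $\Lambda_r^{\!+}\cap U=\Lambda_r\cap U$ on some neighborhood $U$ of $X$, which forces $\widehat{\mathcal{N}}_{\Lambda_r^{\!+}}(X)=\widehat{\mathcal{N}}_{\Lambda_r}(X)$ and $\mathcal{N}_{\Lambda_r^{\!+}}(X)=\mathcal{N}_{\Lambda_r}(X)$; by Lemma \ref{Ncone-MRr} both reduce to $\mathcal{N}_{\Lambda_r}(X)$.

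For ${\rm rank}(X)<r$, the left inclusion follows from $\Lambda_r^{\!+}\subseteq\mathbb{S}_{+}^n$ together with convexity of $\mathbb{S}_{+}^n$ (so $\mathcal{N}_{\mathbb{S}_{+}^n}(X)=\widehat{\mathcal{N}}_{\mathbb{S}_{+}^n}(X)\subseteq\widehat{\mathcal{N}}_{\Lambda_r^{\!+}}(X)$), and the middle inclusion is immediate. For the right inclusion I would invoke the sequential definition: pick $W\in\mathcal{N}_{\Lambda_r^{\!+}}(X)$ realized by $X^k\to X$ in $\Lambda_r^{\!+}$ and $W^k\to W$ with $W^k\in\widehat{\mathcal{N}}_{\Lambda_r^{\!+}}(X^k)$, then extract a subsequence on which either ${\rm rank}(X^k)=r$ for all $k$ or $s_k:={\rm rank}(X^k)<r$ for all $k$. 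In the first subcase the already-established first part of the lemma yields $W^k\in\mathcal{N}_{\Lambda_r}(X^k)$, and outer semicontinuity of $\mathcal{N}_{\Lambda_r}(\cdot)$ gives $W\in\mathcal{N}_{\Lambda_r}(X)$, which lies in the required sum.

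The second subcase is the crux and the main obstacle: the Mordukhovich intersection rule on $\mathbb{S}_{+}^n\cap\Lambda_r$ cannot be invoked because $\mathcal{N}_{\mathbb{S}_{+}^n}(X)\cap(-\mathcal{N}_{\Lambda_r}(X))\ne\{0\}$ whenever ${\rm rank}(X)<r<n$. As a bypass I would prove directly that $\widehat{\mathcal{N}}_{\Lambda_r^{\!+}}(X^k)=\mathcal{N}_{\mathbb{S}_{+}^n}(X^k)$ by exploiting the smooth parametrization $\Lambda_r^{\!+}=\{FF^{\mathbb{T}}:F\in\mathbb{R}^{n\times r}\}$. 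Writing $X^k=P{\rm Diag}(\lambda(X^k))P^{\mathbb{T}}$ and letting $P_{\!\beta}$ gather the $n-s_k$ eigenvectors for the zero eigenvalues, a block calculation that splits $\Delta F$ into components in and orthogonal to the range of $X^k$ and uses both first-order variations and rescaled second-order variations of the form $F(t)=F^k+\sqrt{t}\,[0\mid\Delta F_2]$ yields $\mathcal{T}_{\Lambda_r^{\!+}}(X^k)=\{u\in\mathbb{S}^n:P_{\!\beta}^{\mathbb{T}}uP_{\!\beta}\text{ is PSD of rank }\le r-s_k\text{ in }\mathbb{S}^{n-s_k}\}$. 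Since $r-s_k\ge 1$, the closed convex hull of $\Lambda_{r-s_k}^{\!+}$ inside $\mathbb{S}^{n-s_k}$ is the entire PSD cone, so the closed convex hull of $\mathcal{T}_{\Lambda_r^{\!+}}(X^k)$ coincides with $\mathcal{T}_{\mathbb{S}_{+}^n}(X^k)$; taking polars delivers the claimed identity. Outer semicontinuity of the normal-cone mapping to the convex set $\mathbb{S}_{+}^n$ then forces $W\in\mathcal{N}_{\mathbb{S}_{+}^n}(X)$, which sits inside the required sum and completes the right inclusion.
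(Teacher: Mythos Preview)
Your proposal is correct but follows a genuinely different route from the paper. The paper's proof is a short metric-qualification argument: it first records that ${\rm dist}(Y,\Lambda_r^{\!+})={\rm dist}(Y,\Lambda_r)$ for every $Y\in\mathbb{S}_+^n$, and from this derives the global bound ${\rm dist}(Z,\Lambda_r^{\!+})\le 2\,{\rm dist}(Z,\mathbb{S}_+^n)+{\rm dist}(Z,\Lambda_r)$ for all $Z\in\mathbb{S}^n$; this distance inequality is precisely the hypothesis of Ioffe--Outrata's calmness rule \cite[Section 3.1]{Ioffe08}, which delivers $\mathcal{N}_{\Lambda_r^{\!+}}(X)\subseteq\mathcal{N}_{\mathbb{S}_+^n}(X)+\mathcal{N}_{\Lambda_r}(X)$ in one stroke, and together with the generic lower inclusion $\mathcal{N}_{\mathbb{S}_+^n}(X)+\widehat{\mathcal{N}}_{\Lambda_r}(X)\subseteq\widehat{\mathcal{N}}_{\Lambda_r^{\!+}}(X)$ from \cite[Theorem~6.42]{RW98} both rank cases fall out by inspecting Lemma~\ref{Ncone-MRr}. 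Your argument instead works directly with the local geometry: for ${\rm rank}(X)=r$ you show $\Lambda_r^{\!+}$ and $\Lambda_r$ coincide near $X$, and for ${\rm rank}(X)<r$ you bypass the failure of the normal-cone qualification by establishing the pointwise identity $\widehat{\mathcal{N}}_{\Lambda_r^{\!+}}(Y)=\mathcal{N}_{\mathbb{S}_+^n}(Y)$ at every rank-deficient $Y$, via the tangent-cone formula and the fact that the convex hull of $\Lambda_{r-s}^{\!+}$ is the full PSD cone. Your approach is more self-contained (no appeal to \cite{Ioffe08}) and yields the finer structural information that the regular normal cone at rank-deficient points is exactly $\mathcal{N}_{\mathbb{S}_+^n}$; the paper's approach is considerably shorter, avoids the tangent-cone computation entirely, and the distance inequality it proves is reused verbatim later in Theorem~\ref{Lcriterion2}.
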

 \begin{proof}
  Notice that for any $Y\!\in\mathbb{S}_{+}^n$, ${\rm dist}(Y,\Lambda_{r}^{\!+})={\rm dist}(Y,\Lambda_{r})$.
  Hence, for any $Z\in\mathbb{S}^n$,
  \begin{align}\label{R+set}
  {\rm dist}(Z,\Lambda_{r}^{\!+})&\le\!\|Z\!-\!\Pi_{\mathbb{S}_{+}^n}(Z)\|_F
   +{\rm dist}(\Pi_{\mathbb{S}_{+}^n}(Z),\Lambda_{r}^{\!+})\nonumber\\
  &=\!\|Z\!-\!\Pi_{\mathbb{S}_{+}^n}(Z)\|_F
    +{\rm dist}(\Pi_{\mathbb{S}_{+}^n}(Z),\Lambda_{r})\nonumber\\
  &\le 2{\rm dist}(Z,\mathbb{S}_{+}^n)+{\rm dist}(Z,\Lambda_{r}).
  \end{align}
  Then, by \cite[Section 3.1]{Ioffe08}, $\mathcal{N}_{\Lambda_{r}^{\!+}}(X)
   \subseteq\!\mathcal{N}_{\mathbb{S}_{+}^n}(X)+\!\mathcal{N}_{\Lambda_{r}}(X)$.
  Along with \cite[Theorem 6.42]{RW98},
  \begin{equation}\label{temp-inclusion2}
   \mathcal{N}_{\mathbb{S}_{+}^n}(X)+\widehat{\mathcal{N}}_{\Lambda_r}(X)
   \subseteq\widehat{\mathcal{N}}_{\Lambda_{r}^{\!+}}(X)\subseteq\mathcal{N}_{\Lambda_{r}^{\!+}}(X)
   \subseteq\mathcal{N}_{\mathbb{S}_{+}^n}(X)+\mathcal{N}_{\Lambda_{r}}(X).
  \end{equation}
  When ${\rm rank}(X)=r$, since $\widehat{\mathcal{N}}_{\Lambda_r}(X)=\mathcal{N}_{\Lambda_r}(X)$
  and $\mathcal{N}_{\mathbb{S}_{+}^n}(X)+\mathcal{N}_{\Lambda_r}(X)=\mathcal{N}_{\Lambda_r}(X)$,
  the last inclusions become the desired equalities. When ${\rm rank}(X)<r$,
  since $\widehat{\mathcal{N}}_{\Lambda_r}(X)=\{0\}$,
  we have $\mathcal{N}_{\mathbb{S}_{+}^n}(X)+\widehat{\mathcal{N}}_{\Lambda_r}(X)
  =\mathcal{N}_{\mathbb{S}_{+}^n}(X)$, which by \eqref{temp-inclusion2}
  yields the desired inclusions.
 \end{proof}

 Next we recall from \cite{Gfrerer14,Ye18} the directional version of
 limiting normal cone to a set.
 \begin{definition}\label{dir-normal}
  Given a set $S\subseteq\mathbb{X}$, a point $z\in S$ and a direction
  $d\in\mathbb{X}$, the limiting normal cone to $S$ in direction $d$
  at $z$ is defined by
  \[
   \mathcal{N}_S(z;d):=\!\Big\{v\in\mathbb{X}\!:\,\exists t_k\downarrow 0,d^k\to d,v^k\to v\ {\rm with}\
   v^k\in\widehat{\mathcal{N}}_S(z+t_kd^k)\Big\},
  \]
  and the inner limiting normal cone to $S$ in direction $d$
  at $z$ is defined by
  \[
   \mathcal{N}_S^{i}(z;d):=\!\Big\{v\in\mathbb{Z}\!:\,\forall\,t_k\downarrow 0,d^k\to d,v^k\to v\ {\rm with}\
   v^k\in\widehat{\mathcal{N}}_S(z+t_kd^k)\Big\}.
  \]
 \end{definition}

 From Definition \ref{dir-normal}, it is obvious that $\mathcal{N}_S(z;d)=\emptyset$ if $d\notin \mathcal{T}_S(z)$,
 $\mathcal{N}_S(z;d)\subseteq \mathcal{N}_S(z)$, and $\mathcal{N}_S(z;0)\!=\mathcal{N}_S(z)$.
 When $S$ is convex and $d\in\mathcal{T}_S(z)$,
 $\mathcal{N}_S(z;d)=\mathcal{N}_{\mathcal{T}_S(z)}(d)$.
 \begin{proposition}\label{dnormal-cone}
  Fix any $r\in\{1,2,\ldots,n\}$ and any $X\in\Lambda_r^{\!+}$ with ${\rm rank}(X)=r$. Then,
  \[
    \mathcal{N}_{\Lambda_r^{\!+}}^{i}(X;H)=\mathcal{N}_{\Lambda_r^{\!+}}(X;H)
    =\mathcal{N}_{\Lambda_r^{\!+}}(X)\quad{\rm for\ all}\ H\in\mathcal{T}_{\Lambda_r^{\!+}}(X).
  \]
  In particular, for any $X\in\Lambda_{r}$ with ${\rm rank}(X)=r$, it also holds that
  \[
    \mathcal{N}_{\Lambda_{r}}^{i}(X;H)=\mathcal{N}_{\Lambda_{r}}(X;H)
    =\mathcal{N}_{\Lambda_{r}}(X)\quad{\rm for\ all}\ H\in\mathcal{T}_{\Lambda_{r}}(X).
  \]
 \end{proposition}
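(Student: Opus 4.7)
The plan is to exploit that $\mathrm{rank}(X)=r$ forces $\Lambda_{r}^{\!+}$ (and $\Lambda_{r}$) to locally coincide with a smooth manifold of fixed rank, and then deduce the desired equalities from the resulting continuity of the normal-cone map along any sequence that stays in this manifold.

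First I would invoke lower semicontinuity of the rank to secure a neighborhood $\mathcal{U}$ of $X$ in which every $Y\in\Lambda_{r}^{\!+}\cap\mathcal{U}$ necessarily has $\mathrm{rank}(Y)=r$. Consequently, for any $t_k\downarrow 0$ and $d^k\to H\in\mathcal{T}_{\Lambda_{r}^{\!+}}(X)$ with $Y^k:=X+t_kd^k\in\Lambda_{r}^{\!+}$, the rank of $Y^k$ is exactly $r$ for all large $k$. Lemma \ref{Ncone-MRr+} applied at $Y^k$ then yields $\widehat{\mathcal{N}}_{\Lambda_{r}^{\!+}}(Y^k)=\mathcal{N}_{\Lambda_{r}^{\!+}}(Y^k)=\mathcal{N}_{\Lambda_{r}}(Y^k)$, and Lemma \ref{Ncone-MRr} identifies this common value as the explicit linear subspace determined by the trailing singular-vector blocks of $Y^k$. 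The inclusions
\[
\mathcal{N}_{\Lambda_{r}^{\!+}}^{i}(X;H)\subseteq\mathcal{N}_{\Lambda_{r}^{\!+}}(X;H)\subseteq\mathcal{N}_{\Lambda_{r}^{\!+}}(X)
\]
are immediate from Definition \ref{dir-normal}, so the real work is to prove the reverse inclusion $\mathcal{N}_{\Lambda_{r}^{\!+}}(X)\subseteq\mathcal{N}_{\Lambda_{r}^{\!+}}^{i}(X;H)$.

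For that, I would fix an arbitrary $v\in\mathcal{N}_{\Lambda_{r}^{\!+}}(X)=\mathcal{N}_{\Lambda_{r}}(X)$ and, given any admissible sequences $t_k,d^k$ as above, take $v^k$ to be the orthogonal projection of $v$ onto the linear subspace $\mathcal{N}_{\Lambda_{r}}(Y^k)\subseteq\widehat{\mathcal{N}}_{\Lambda_{r}^{\!+}}(Y^k)$; it then suffices to show $v^k\to v$. The hard part will be this subspace-continuity step. Because $\mathrm{rank}(Y^k)=r=\mathrm{rank}(X)$ and $Y^k\to X$, the $r$-th singular value of $Y^k$ stays uniformly bounded away from $0$ while the $(r+1)$-th vanishes identically, so a persistent spectral gap is available; a standard Davis--Kahan/Wedin perturbation estimate then forces the orthogonal projectors onto the trailing singular subspaces of $Y^k$ to converge in operator norm to those of $X$. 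This makes the set-valued map $Y\mapsto\mathcal{N}_{\Lambda_{r}}(Y)$ continuous along the sequence, and since $v$ lies in the limit subspace its projections $v^k$ converge to $v$, as required. The ``in particular'' statement for $\Lambda_{r}$ follows by the same argument, invoking only Lemma \ref{Ncone-MRr}; the PSD constraint plays no role in the continuity step.
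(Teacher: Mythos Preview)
Your proposal is correct and follows essentially the same route as the paper: reduce to the inclusion $\mathcal{N}_{\Lambda_r^{\!+}}(X)\subseteq\mathcal{N}_{\Lambda_r^{\!+}}^{i}(X;H)$, use lower semicontinuity of the rank to force $\mathrm{rank}(Y^k)=r$ along any approaching sequence in $\Lambda_r^{\!+}$, and then push a given normal vector $v$ into $\widehat{\mathcal{N}}_{\Lambda_r^{\!+}}(Y^k)$ by projecting it onto that linear subspace. The paper's choice $W^k=P_\beta^k(P_\beta^k)^{\mathbb T}WP_\beta^k(P_\beta^k)^{\mathbb T}$ is precisely your orthogonal projection. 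The only substantive difference is in how the subspace-continuity step is justified: the paper argues by compactness of $\mathbb{O}^n$, passing to a convergent subsequence $P^k\to\widetilde P\in\mathbb{O}^n(X)$ and then using the block structure of eigenspaces to conclude $\widetilde P_\beta\widetilde P_\beta^{\mathbb T}=P_\beta P_\beta^{\mathbb T}$ (hence $W^k\to W$ along every subsequence, so along the full sequence); you instead invoke a Davis--Kahan/Wedin bound using the persistent spectral gap. Both arguments are valid; yours is slightly more direct but imports an external perturbation theorem, while the paper's is self-contained. One minor point: the paper reads the definition of $\mathcal{N}_S^{i}(z;d)$ as ``for every $t_k\downarrow 0$ there exist $d^k\to d$ and $v^k\to v$ with \ldots'', and accordingly invokes Clarke regularity (derivability) of $\Lambda_r^{\!+}$ at $X$ to manufacture $d^k$ from an arbitrary $t_k$; you instead quantify over all $(t_k,d^k)$ with $X+t_kd^k\in\Lambda_r^{\!+}$, which in fact proves a stronger statement and makes the derivability step unnecessary.
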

 \begin{proof}
  Fix any $0\ne H\in\mathcal{T}_{\Lambda_r^{\!+}}(X)$. Then
  $\mathcal{N}_{\Lambda_r^{\!+}}^{i}(X;H)\subseteq\mathcal{N}_{\Lambda_r^{\!+}}(X;H)
  \subseteq\mathcal{N}_{\Lambda_r^{\!+}}(X)$. So, for the first part, it suffices to prove that
  $\mathcal{N}_{\Lambda_r^{\!+}}(X)\subseteq\mathcal{N}_{\Lambda_r^{\!+}}^{i}(X;H)$.
  Pick any $W\!\in\!\mathcal{N}_{\Lambda_r^{\!+}}(X)$.
  Let $X$ have the eigenvalue decomposition as $P{\rm Diag}(\lambda(X))P^{\mathbb{T}}$
  and write $\beta:=\{i\,|\,\lambda_i(X)=0\}$. By Lemma \ref{Ncone-MRr},
  $W\!=P_{\beta}P_{\beta}^{\mathbb{T}}WP_{\beta}P_{\beta}^{\mathbb{T}}$.
  Since $\Lambda_r^{\!+}$ is Clarke regular at $X$ by Lemma \ref{Ncone-MRr},
  from $H\in\mathcal{T}_{\Lambda_r^{\!+}}(X)$ it follows that for any $\tau_k\downarrow 0$,
  there exists a sequence $\{X^k\}\subseteq\Lambda_{r}^{\!+}$ with $X^k=X+\tau_k(H+\frac{o(\tau_k)}{\tau_k})$.
  Since ${\rm rank}(X)=r$ and $X^k\to X$, there exists
  $\overline{k}\in\mathbb{N}$ such that ${\rm rank}(X^k)\ge r$ for all $k\ge\overline{k}$.
  Along with $\{X^k\}\subseteq\Lambda_r^{\!+}$, we have ${\rm rank}(X^k)=r$
  for each $k\ge\overline{k}$, which implies that $\lambda_i(X^k)\ne 0$
  for $i\notin\beta$ and $\lambda_i(X^k)=0$ for $i\in\beta$
  when $k$ is large enough. For each $k$, let
  $W^k=P_{\beta}^k(P_{\beta}^k)^{\mathbb{T}}WP_{\beta}^k(P_{\beta}^k)^{\mathbb{T}}$
  with $P^k\in\mathbb{O}^{n}(X^k)$. By Lemma \ref{Ncone-MRr},
  we have $W^k\in\widehat{\mathcal{N}}_{\Lambda_r^{\!+}}(X^k)$ for all $k$
  large enough. Since the sequence $\{P^k\}$ is bounded, we may assume that
  (if necessary taking a subsequence) that $P^k\to\widetilde{P}$.
  Clearly, $\widetilde{P}\in\mathbb{O}^{n}(X)$ and
  $W^k\to\widetilde{W}:=\widetilde{P}_{\beta}\widetilde{P}_{\beta}^{\mathbb{T}}
  W\widetilde{P}_{\beta}\widetilde{P}_{\beta}^{\mathbb{T}}$.
  Let $\mu_1>\mu_2>\cdots>\mu_l$ be the distinct eigenvalues of $X$
  and $a_k:=\{i\ |\ \lambda_i(X)=\mu_k\}$ for $k=1,2,\ldots,l$.
  Since $\widetilde{P}\in\mathbb{O}^{n}(X)$ and $P\in\mathbb{O}^{n}(X)$,
  there exists $Q={\rm BlkDiag}(Q_1,\ldots,Q_l)$ with $Q_k\in \mathbb{O}^{|a_k|}$
  for $k=1,2,\ldots,l$ such that $\widetilde{P}=PQ$, which implies that
  $\widetilde{P}_{\beta}\widetilde{P}_{\beta}^{\mathbb{T}}=P_{\beta}P_{\beta}^{\mathbb{T}}$.
  Thus, $\widetilde{W}=W$. By Definition \ref{dir-normal},
  we conclude that $W\in\mathcal{N}_{\Lambda_r^{\!+}}^{i}(X;H)$.
  Using the same arguments, we obtain the second part.
  \end{proof}

  When ${\rm rank}(X)\!<r$, for every $H\in\mathcal{T}_{\Lambda_r^{\!+}}(X)$,
  a tighter upper estimation for $\mathcal{N}_{\Lambda_r^{\!+}}(X;H)$
  than $\mathcal{N}_{\Lambda_r^{\!+}}(X)$ can not be achieved since,
  the exact expression of $\widehat{\mathcal{N}}_{\Lambda_r^{\!+}}(Z)$
  for $Z\in\mathbb{S}^n$ with  ${\rm rank}(Z)<r$ is unavailable.
  Such a difficulty also appears in sparsity constraint sets.
 \section{Calmness of mapping $\mathcal{S}_{r}$ and examples}\label{sec3}

 In this section we establish the calmness of the mapping $\mathcal{S}_{r}$
 under a regularity condition, and illustrate that this condition can be satisfied
 via a collection of common examples.
 \subsection{Calmness of mapping $\mathcal{S}_{r}$}\label{sec3.1}

 First, we achieve the calmness of $\mathcal{S}_{r}$ at $0$ for any
 $\overline{X}\in\mathcal{S}_{r}(0)$ or equivalently a Lipschitz-type local
 error bound for the set $\Gamma_{\!r}$ at any $\overline{X}\in\Gamma_{\!r}$, under a condition coming from
 the metric regularity of a lifted formulation of $\mathcal{S}_{r}$ at $(\overline{X},\overline{X})$ for the origin.
 \begin{theorem}\label{Lcriterion1}
  Consider any $\overline{X}\!\in\Gamma_{\!r}$.
  The mapping $\mathcal{S}_{r}$ is calm at $0$ for $\overline{X}$ if and only if
  either of the following equivalent conditions holds:
  \begin{itemize}
   \item[(i)] there exists a constant $\gamma\ge 0$ along with $\delta>0$ such that
              for all $X\in\mathbb{B}(\overline{X},\delta)$,
              \begin{equation}\label{Lebound1}
                {\rm dist}(X,\Gamma_{\!r})\le\gamma\big[{\rm dist}(X,\Omega)+{\rm dist}(X,\Lambda_{r})\big];
              \end{equation}

  \item[(ii)] $\mathcal{F}_{r}(X,Y)\!:=\!\left\{\!\begin{array}{cl}
               \!\{X\!-\!Y\}\!&\!{\rm if}\ (X,Y)\in\Omega\times\!\Lambda_{r}\!\\
               \emptyset\!& {\rm otherwise}
               \end{array}\right.$ is subregular at $(\overline{X},\overline{X})$ for the origin.
  \end{itemize}
  Consequently, the calmness of $\mathcal{S}_{r}$ at $0$ for any $\overline{X}\in\Gamma_{\!r}$
  is implied by the following condition
  \begin{equation}\label{criterion1}
   [-\mathcal{N}_{\Omega}(\overline{X})]\cap\mathcal{N}_{\Lambda_{r}}(\overline{X})=\{0\}.
  \end{equation}
 \end{theorem}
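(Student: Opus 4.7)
The plan is to first establish the chain of equivalences: calmness of $\mathcal{S}_{r}$ at $0$ for $\overline{X}$ $\Leftrightarrow$ (i) $\Leftrightarrow$ (ii), and then show that \eqref{criterion1} forces condition (i). The glue connecting the calmness definition (which involves the scalar residual $\|X\|_*-\|X\|_{(r)}$) to condition (i) (which involves $\mathrm{dist}(X,\Lambda_r)$) is the elementary pair of inequalities
\[
 \mathrm{dist}_F(X,\Lambda_r)\;\le\;\|X\|_*-\|X\|_{(r)}\;=\;\sum_{i>r}\sigma_i(X)\;\le\;\sqrt{n-r}\,\mathrm{dist}_F(X,\Lambda_r),
\]
obtained from $\mathrm{dist}_F(X,\Lambda_r)^2=\sum_{i>r}\sigma_i(X)^2$ and Cauchy--Schwarz. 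I will keep this as a standing tool throughout.

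To prove calmness $\Rightarrow$ (i), given $X$ near $\overline{X}$ I would pass to $Y:=\Pi_{\Omega}(X)$, which lies in $\Omega$ with $\|Y-\overline{X}\|_F\le 2\|X-\overline{X}\|_F$. Setting $\tau:=\|Y\|_*-\|Y\|_{(r)}$, the point $Y$ belongs to $\mathcal{S}_r(\tau)\cap\mathbb{B}(\overline{X},\delta)$, so calmness supplies $Z\in\Gamma_r$ with $\|Y-Z\|_F\le\gamma\tau$; the bound $\tau\le\sqrt{n-r}\,\mathrm{dist}_F(Y,\Lambda_r)\le\sqrt{n-r}[\mathrm{dist}(X,\Omega)+\mathrm{dist}(X,\Lambda_r)]$ together with the triangle inequality then yields (i). The reverse is immediate: if $X\in\mathcal{S}_r(\tau)\cap\mathbb{B}(\overline{X},\delta)$ then $X\in\Omega$ and $\mathrm{dist}(X,\Lambda_r)\le\tau$, so (i) gives $\mathrm{dist}(X,\Gamma_r)\le\gamma\tau$, which is exactly calmness (only $\tau\ge 0$ matters since $\mathrm{dom}\,\mathcal{S}_r\subseteq\mathbb{R}_+$). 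For (i) $\Leftrightarrow$ (ii), I expand subregularity of $\mathcal{F}_r$ as
\[
 \mathrm{dist}\bigl((X,Y),\{(Z,Z):Z\in\Gamma_r\}\bigr)\le\kappa\|X-Y\|_F\quad\forall(X,Y)\in\Omega\times\Lambda_r\text{ near }(\overline X,\overline X).
\]
From (i), given such $(X,Y)$ I apply (i) to $X\in\Omega$ (noting $\mathrm{dist}(X,\Lambda_r)\le\|X-Y\|_F$) to get a witness $Z\in\Gamma_r$ with both $\|X-Z\|_F$ and $\|Y-Z\|_F$ of order $\|X-Y\|_F$. Conversely, from (ii) I project a point $X$ separately onto $\Omega$ and onto $\Lambda_r$, feed the pair into the subregularity estimate, and recover (i) after one triangle inequality.

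For the final implication, condition \eqref{criterion1} is the classical linear-regularity/transversality condition for the pair $\{\Omega,\Lambda_r\}$ at $\overline{X}\in\Omega\cap\Lambda_r=\Gamma_r$. Applying \cite[Section 3.1]{Ioffe08} (already invoked in Lemma \ref{Ncone-MRr+}) one obtains a constant $\gamma\ge 0$ and $\delta>0$ such that $\mathrm{dist}(X,\Omega\cap\Lambda_r)\le\gamma[\mathrm{dist}(X,\Omega)+\mathrm{dist}(X,\Lambda_r)]$ for all $X\in\mathbb{B}(\overline{X},\delta)$, which is exactly (i); composing with the equivalences proved above yields the calmness of $\mathcal{S}_r$ at $0$ for $\overline{X}$.

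I expect the main obstacle to be the calmness $\Rightarrow$ (i) step: one cannot feed $X$ itself into the calmness estimate (it need not lie in $\Omega$), so the projection-to-$\Omega$ trick plus the quantitative pass from the nuclear-norm residual to $\mathrm{dist}_F(\cdot,\Lambda_r)$ has to be executed with some care, in particular keeping track of the local neighborhoods so that both $X$ and $\Pi_\Omega(X)$ sit inside the ball where the calmness constant is valid. The rest is bookkeeping with triangle inequalities, plus direct invocation of the cited linear-regularity result for the final implication.
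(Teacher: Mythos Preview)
Your proposal is correct and follows essentially the same route as the paper for the equivalences calmness $\Leftrightarrow$ (i) $\Leftrightarrow$ (ii): projecting onto $\Omega$ and passing between $\|X\|_*-\|X\|_{(r)}$ and $\mathrm{dist}(X,\Lambda_r)$ for the first equivalence, and feeding projected pairs $(\Pi_\Omega(X),\Pi_{\Lambda_r}(X))$ into the subregularity estimate for the second. The only packaging difference is in the final implication: the paper obtains it by showing that \eqref{criterion1} is exactly the Mordukhovich criterion for the Aubin property of $\mathcal{F}_r^{-1}$ at the origin (via an explicit computation of $\mathcal{N}_{\mathrm{gph}\,\mathcal{F}_r^{-1}}$ using $\mathrm{gph}\,\mathcal{F}_r^{-1}=\mathcal{L}^{-1}(\Omega\times\Lambda_r\times\{0\})$), hence metric regularity of $\mathcal{F}_r$, hence subregularity $=$ (ii); you instead invoke the transversality $\Rightarrow$ linear-regularity implication as a black box. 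These are the same argument at bottom, but be aware that the specific use of \cite{Ioffe08} in Lemma~\ref{Ncone-MRr+} is for the \emph{reverse} direction (distance bound $\Rightarrow$ normal-cone sum rule), so if you keep your shortcut you should cite the forward implication in that reference, or simply carry out the Mordukhovich-criterion computation as the paper does.
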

 \begin{proof}
  By the definition, the calmness of the mapping $\mathcal{S}_{r}$
  at $0$ for $\overline{X}$ is equivalent to the existence of $\gamma'\ge 0$
  and $\delta'>0$ such that for all $Z\in\mathbb{B}(\overline{X},\delta')$,
  \begin{equation}\label{calm-ineq}
    {\rm dist}(Z,\mathcal{S}_{r}(0))\le\gamma'{\rm dist}(0,\mathcal{S}_{r}^{-1}(Z))
    =\left\{\begin{array}{cl}
    \!\gamma'[\|Z\|_*\!-\!\|Z\|_{(r)}]&{\rm if}\ Z\in\Omega;\\
     \infty & {\rm otherwise}
     \end{array}\right.
 \end{equation}
 where the equality is due to the definition of $\mathcal{S}_{r}$ and the fact that
 ${\rm dom}\mathcal{S}_{r}=\mathbb{R}_{+}$.

 \noindent
 {\bf(i)} If there exist $\gamma\ge 0$ and $\delta>0$ such that inequality \eqref{Lebound1}
 holds for all $X\in\mathbb{B}(\overline{X},\delta)$, then inequality \eqref{calm-ineq}
 obviously holds, and the mapping $\mathcal{S}_{r}$ is calm at $0$ for $\overline{X}$.
 Now assume that $\mathcal{S}_{r}$ is calm at $0$ for $\overline{X}$, i.e.,
 there exist $\gamma'\ge 0$ and $\delta'>0$ such that inequality \eqref{calm-ineq}
 holds for all $X\in\mathbb{B}(\overline{X},\delta')$. We will show that
 inequality \eqref{Lebound1} holds with $\delta=\delta'/2$ and $\gamma=1\!+\!\sqrt{n}\gamma'$.
 Pick any $X\in\mathbb{B}(\overline{X},\delta)$. If $X\in\Omega$, by \eqref{calm-ineq}
 the inequality \eqref{Lebound1} holds with $\gamma=\sqrt{n}\gamma'$.
 If $X\notin\Omega$, since $\|\Pi_{\Omega}(X)-\overline{X}\|_F\le 2\|X\!-\!\overline{X}\|_F\le\delta'$,
 from \eqref{calm-ineq} we have
 \begin{align*}
  {\rm dist}(X,\Gamma_{\!r})&\le \|X-\Pi_{\Omega}(X)\|_F+{\rm dist}(\Pi_{\Omega}(X),\mathcal{S}_{r}(0))
  \le {\rm dist}(X,\Omega)+\gamma'{\textstyle\sum_{i=r+1}^n}\sigma_i(\Pi_{\Omega}(X))\\
  &={\rm dist}(X,\Omega)+\gamma'\min_{Z\in\Lambda_{r}}\|Z-\Pi_{\Omega}(X)\|_*\\
  &\le{\rm dist}(X,\Omega)+\gamma'\|X-\Pi_{\Omega}(X)\|_*+\gamma'\min_{Z\in\Lambda_{r}}\|Z-X\|_*\\
  &={\rm dist}(X,\Omega)+\sqrt{n}\gamma'\|X-\Pi_{\Omega}(X)\|_F+\gamma'{\textstyle\sum_{i=r+1}^n}\sigma_i(X)\\
  &\le (1\!+\!\sqrt{n}\gamma'){\rm dist}(X,\Omega)+\gamma'{\textstyle\sum_{i=r+1}^n}\sigma_i(X)
 \end{align*}
 where the equalities are due to Lemma \ref{lemma-SVD} in Appendix.
 So, \eqref{Lebound1} holds with $\gamma=1\!+\!\sqrt{n}\gamma'$.

 \medskip
 \noindent
 {\bf(ii)} It suffices to argue that $\mathcal{F}_{\!r}$ is subregular at
 $(\overline{X},\overline{X})$ for the origin iff part (i) holds.

 \noindent
 $\Longrightarrow$. Since the mapping $\mathcal{F}_{\!r}$ is subregular at $(\overline{X},\overline{X})$
 for the origin, there exists a constant $\kappa\!\ge 0$ along with $\varepsilon\!>0$ such that for all
 $(X,Y)\in\mathbb{B}((\overline{X},\overline{X}),\varepsilon)\cap(\Omega\times\Lambda_{r})$,
 \begin{equation}\label{ineq-MG}
   {\rm dist}((X,Y),\mathcal{F}_{\!r}^{-1}(0))\le\kappa{\rm dist}(0,\mathcal{F}_{\!r}(X,Y)).
  \end{equation}
  Pick any $X\!\in\mathbb{B}(\overline{X},\varepsilon/4)$.
  Obviously, $(X,X)\in\mathbb{B}((\overline{X},\overline{X}),\varepsilon)$.
  If $X\in\Omega\cap\Lambda_{r}=\Gamma_{\!r}$, part (i) automatically holds.
  If $X\in\Omega\backslash\Lambda_{r}$, by noting that $\|\Pi_{\Lambda_{r}}(X)\!-\!\overline{X}\|_F\le2\|X\!-\!\overline{X}\|_F\le\varepsilon/2$,
  we have $(X,\Pi_{\Lambda_{r}}(X))\in\mathbb{B}((\overline{X},\overline{X}),\varepsilon)\cap(\Omega\times\Lambda_{r})$,
  and from \eqref{ineq-MG} it follows that
  \begin{align*}
   {\rm dist}(X,\Gamma_{\!r})&\le{\rm dist}((X,X),\mathcal{F}_{\!r}^{-1}(0))
   \le{\rm dist}((X,\Pi_{\Lambda_{r}}(X)),\mathcal{F}_{\!r}^{-1}(0))+\|\Pi_{\Lambda_{r}}(X)\!-\!X\|_F\\
   &\le\kappa{\rm dist}(0,\mathcal{F}_{\!r}(X,\Pi_{\Lambda_{r}}(X)))+{\rm dist}(X,\Lambda_{r})
   =(1\!+\!\kappa){\rm dist}(X,\Lambda_{r})
  \end{align*}
  where the first inequality is due to $\Gamma_{\!r}\times\Gamma_{\!r}\supseteq\mathcal{F}_{\!r}^{-1}(0)$.
  If $X\in\Lambda_{r}\backslash\Omega$, using the similar arguments yields that
  ${\rm dist}(X,\Gamma_{\!r})\le(1\!+\!\kappa){\rm dist}(X,\Omega)$. Finally, we consider the case that
  $X\notin\Omega\cup\Lambda_{r}$. Since $\|\Pi_{\Omega}(X)\!-\overline{X}\|_F\le\varepsilon/2$
  and $\|\Pi_{\Lambda_{r}}(X)\!-\overline{X}\|_F\le\varepsilon/2$, we have
  $(\Pi_{\Omega}(X),\Pi_{\Lambda_{r}}(X))\in\mathbb{B}((\overline{X},\overline{X}),\varepsilon)\cap(\Omega\times\Lambda_{r})$,
  which along with \eqref{ineq-MG} implies that
  \begin{align*}
   &{\rm dist}(X,\Gamma_{\!r})\le{\rm dist}((X,X),\mathcal{F}_{\!r}^{-1}(0))\\
   &\le{\rm dist}((\Pi_{\Omega}(X),\Pi_{\Lambda_{r}}(X)),\mathcal{F}_{\!r}^{-1}(0))+\|(\Pi_{\Omega}(X),\Pi_{\Lambda_{r}}(X))-(X,X)\|_F\\
   &\le\kappa{\rm dist}(0,\mathcal{F}_{\!r}(\Pi_{\Omega}(X),\Pi_{\Lambda_{r}}(X))+{\rm dist}(X,\Omega)+{\rm dist}(X,\Lambda_{r})\\
   &\le(1\!+\!\kappa)\big[{\rm dist}(X,\Omega)+{\rm dist}(X,\Lambda_{r})\big].
  \end{align*}
  The arguments for the above four cases show that part (i) holds.

 \noindent
 $\Longleftarrow$.
  Since part (i) holds, there exist $\gamma\ge 0$ and $\delta>0$ such that
  inequality \eqref{Lebound1} holds for all $Z\in\mathbb{B}(\overline{X},\delta)$.
  Fix any $(X,Y)\in\mathbb{B}((\overline{X},\overline{X}),\delta)\cap(\Omega\times\Lambda_{r})$.
  Since $X\in\mathbb{B}(\overline{X},\delta)\cap\Omega$ and
  $Y\in\mathbb{B}(\overline{X},\delta)\cap\Lambda_{r}$,
  from inequality \eqref{Lebound1} it immediately follows that
  \[
    \max\{{\rm dist}(X,\Gamma_{\!r}),{\rm dist}(Y,\Gamma_{\!r})\}
    \le\gamma\big[{\rm dist}(Y,\Omega)+{\textstyle\sum_{i=r+1}^n}\sigma_i(X)\big].
  \]
  Observe that ${\rm dist}((X,Y),\mathcal{F}_{\!r}^{-1}(0))
  \le\min_{(Z,Z)\in\Gamma_{\!r}\times\Gamma_{\!r}}\|(Z,Z)-(X,Y)\|_F$.
  Then, we have
  \begin{align*}
   {\rm dist}((X,Y),\mathcal{F}_{\!r}^{-1}(0))
   \le {\rm dist}(X,\Gamma_{\!r})+{\rm dist}(Y,\Gamma_{\!r})
   \le 2\gamma\big[{\rm dist}(Y,\Omega)+{\textstyle\sum_{i=r+1}^n}\sigma_i(X)\big].
  \end{align*}
  This shows that the mapping $\mathcal{F}_{\!r}$ is metrically subregular at
  $(\overline{X},\overline{X})$ for the origin.

  From the equivalence between (i) and (ii), the local error bound in part (i) is implied by
  the metric regularity of $\mathcal{F}_{\!r}$ at $(\overline{X},\overline{X})$ for the origin
  or the Aubin property of its inverse at the origin for $(\overline{X},\overline{X})$.
  The latter is equivalent to $[-\mathcal{N}_{\Omega}(\overline{X})]\cap\mathcal{N}_{\Lambda_{r}}(\overline{X})=\{0\}$.
  Indeed, since ${\rm gph}\mathcal{F}_{\!r}^{-1}\!=\!\mathcal{L}^{-1}(\Omega\times\Lambda_{r}\times\{0\})$
  with $\mathcal{L}(G,X,Y)\!:=(X;Y;G-\!X+Y)$ for $G,X,Y\in\mathbb{X}$, from the surjectivity of
  the mapping $\mathcal{L}$ and \cite[Exercise 6.7 \& Proposition 6.41]{RW98},
  \begin{align}\label{normal}
   &\mathcal{N}_{{\rm gph}\mathcal{F}_{\!r}^{-1}}(0,\overline{X},\overline{X})
    =\mathcal{L}^*[\mathcal{N}_{\Omega}(\overline{X})\times\mathcal{N}_{\Lambda_{r}}(\overline{X})\times\mathbb{X}]\nonumber\\
    &=\!\Big\{(\Delta W,\Delta S\!-\!\Delta W,\Delta Z\!+\!\Delta W)\,|\,\Delta S\in\mathcal{N}_{\Omega}(\overline{X}),
    \Delta Z\in\mathcal{N}_{\Lambda_r}(\overline{X})\Big\}.
  \end{align}
   From \cite[Proposition 3.5]{Mordu94} or \cite[Theorem 9.40]{RW98} it follows that
  the mapping $\mathcal{F}_{\!r}$ has the Aubin property at the origin for $\overline{X}$
  iff $D^*\mathcal{F}_{\!r}((0,0)|\overline{X})(0)=\{(0,0)\}$, or equivalently
  \[
    (\Delta G,0,0)\in\mathcal{N}_{{\rm gph}\mathcal{F}_{\!r}^{-1}}(0,\overline{X},\overline{X})
    \ \Longrightarrow\ \Delta G=0.
  \]
  This, together with \eqref{normal}, is equivalent to saying that $[-\mathcal{N}_{\Omega}(\overline{X})]\cap\mathcal{N}_{\Lambda_{r}}(\overline{X})=\{0\}$.
 \end{proof}

  When $\Omega=\mathbb{S}_{+}^n\cap\Xi$ for a closed set $\Xi\subset\mathbb{S}^n$,
  the condition \eqref{criterion1} does not hold because, by letting $\overline{X}$ have the eigenvalue
  decomposition as $\overline{X}\!=\overline{P}{\rm Diag}(\lambda(\overline{X}))\overline{P}^{\mathbb{T}}$
  and taking $\overline{Z}\!=\!\overline{P}_{\!\beta_1}\overline{P}_{\!\beta_1}^{\mathbb{T}}$
  with $\beta_1\subset\beta\!:=\!\{i\,|\,\lambda_i(\overline{X})=0\}$ for $|\beta_1|\le n\!-r$,
  from Lemma \ref{lemma21} and \ref{Ncone-MRr} we have
  $\overline{Z}\in [-\mathcal{N}_{\mathbb{S}_{+}^n}(\overline{X})]\cap\mathcal{N}_{\Lambda_{r}}(\overline{X})$,
  which together with $\mathcal{N}_{\Omega}(\overline{X})\supseteq
  \widehat{\mathcal{N}}_{\Xi}(\overline{X})+\mathcal{N}_{\mathbb{S}_{+}^n}(\overline{X})$
  means that $0\ne\overline{Z}\in[-\mathcal{N}_{\Omega}(\overline{X})]\cap\mathcal{N}_{\Lambda_r}(\overline{X})$.
  The reason is that the separation of $\mathbb{S}_{+}^n$ from $\Lambda_{r}$
  makes it difficult to hold by recalling that $\mathcal{N}_{\mathbb{S}_{+}^n}(X)\subseteq
  \mathcal{N}_{\Lambda_r}(X)$. Inspired by this, for this class of $\Omega$,
 we achieve the calmness of $\mathcal{S}_{r}$  at $0$ for $\overline{X}\in\Gamma_{\!r}$
 by combining $\mathbb{S}_{+}^n$ and $\Lambda_{r}$.
 \begin{theorem}\label{Lcriterion2}
  Let $\Omega=\mathbb{S}_{+}^n\cap\Xi$ for a closed set $\Xi\subset\mathbb{S}^n$.
  Consider any $\overline{X}\in\Gamma_{\!r}$.
  The mapping $\mathcal{S}_{r}$ is calm at $0$ for $\overline{X}$ under either of
  the equivalent conditions:
  \begin{itemize}
   \item[(i)] there exists a constant $\beta\ge 0$ along with $\varepsilon>0$ such that
              for all $X\in\mathbb{B}(\overline{X},\varepsilon)$,
              \begin{equation}\label{Lebound2}
                {\rm dist}(X,\Gamma_{\!r})\le\beta\big[{\rm dist}(X,\Xi)+{\rm dist}(X,\Lambda_{r}^{\!+})\big];
              \end{equation}

  \item[(ii)] $\mathcal{G}_{r}(X,Y)\!:=\!\left\{\!\begin{array}{cl}
               \!\{X\!-\!Y\}\!&\!{\rm if}\ (X,Y)\in\Xi\times\!\Lambda_{r}^{\!+}\!\\
               \emptyset\!& {\rm otherwise}
               \end{array}\right.$ is subregular at $(\overline{X},\overline{X})$ for the origin;
  \end{itemize}
  and the calmness of $\mathcal{S}_{r}$ at $0$ for $\overline{X}$ is equivalent to either of
  conditions (i) and (ii) if in addition there exists a constant $\kappa'\!>0$ along with $\epsilon'\!>0$ such that
  for all $X\!\in\mathbb{B}(\overline{X},\epsilon')$
  \begin{equation}\label{cond-Xi}
    {\rm dist}(X,\Omega)\le\kappa'\big[{\rm dist}(X,\Xi)+{\rm dist}(X,\mathbb{S}_{+}^n)\big].
  \end{equation}
  Consequently, the calmness of $\mathcal{S}_{r}$ at $0$ for any $\overline{X}\in\Gamma_{\!r}$
  is implied by the following condition
  \begin{equation}\label{criterion2}
   [-\mathcal{N}_{\Xi}(\overline{X})]\cap\mathcal{N}_{\Lambda_{r}^{\!+}}(\overline{X})=\{0\}.
  \end{equation}
 \end{theorem}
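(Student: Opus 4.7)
The plan is to transfer the strategy of Theorem \ref{Lcriterion1} to the current setting, with the closed set $\Xi$ playing the role of $\Omega$ and the positive semidefinite rank set $\Lambda_r^{\!+}$ playing that of $\Lambda_r$, while using the extra condition \eqref{cond-Xi} to bridge the gap arising from $\Omega = \mathbb{S}_{+}^n \cap \Xi$. I would first establish (i) $\Leftrightarrow$ (ii) by the direct projection argument mirroring part (ii) of Theorem \ref{Lcriterion1}. For (i) $\Rightarrow$ (ii), substitute any $(X,Y) \in \mathbb{B}((\overline{X},\overline{X}),\varepsilon) \cap (\Xi \times \Lambda_r^{\!+})$ into \eqref{Lebound2}, note ${\rm dist}(X,\Xi) = {\rm dist}(Y,\Lambda_r^{\!+}) = 0$, and bound ${\rm dist}((X,Y), \mathcal{G}_r^{-1}(0))$ by ${\rm dist}(X, \Gamma_r) + {\rm dist}(Y, \Gamma_r) \le 2\beta\|X-Y\|_F$. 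For (ii) $\Rightarrow$ (i), given any $X$ near $\overline{X}$, I would form the pair $(\Pi_\Xi(X), \Pi_{\Lambda_r^{\!+}}(X)) \in \Xi \times \Lambda_r^{\!+}$, apply the subregularity of $\mathcal{G}_r$ to it, and assemble \eqref{Lebound2} via the triangle inequality, handling separately the cases where $X$ already belongs to $\Xi$, to $\Lambda_r^{\!+}$, to both, or to neither.

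Next I would treat the direction (i) $\Rightarrow$ calmness of $\mathcal{S}_r$ at $0$. For any $X \in \mathcal{S}_r(\tau)$ close to $\overline{X}$, the condition $X \in \Omega = \mathbb{S}_{+}^n \cap \Xi$ forces ${\rm dist}(X,\Xi) = 0$ and $\sigma_i(X) = \lambda_i(X) \ge 0$, so
\[
 {\rm dist}(X, \Lambda_r^{\!+}) = \sqrt{{\textstyle\sum_{i=r+1}^n}\lambda_i(X)^2} \le {\textstyle\sum_{i=r+1}^n}\lambda_i(X) = \|X\|_* - \|X\|_{(r)} = \tau,
\]
whence \eqref{Lebound2} yields ${\rm dist}(X,\Gamma_r) \le \beta\tau$. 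For the reverse implication under \eqref{cond-Xi}, I would first reuse the computation of Theorem \ref{Lcriterion1}(i) to derive ${\rm dist}(X,\Gamma_r) \le C_1\,{\rm dist}(X,\Omega) + C_2{\textstyle\sum_{i=r+1}^n} \sigma_i(X)$ on a neighborhood of $\overline{X}$, then invoke \eqref{cond-Xi} to replace ${\rm dist}(X,\Omega)$ by $\kappa'[{\rm dist}(X,\Xi) + {\rm dist}(X,\mathbb{S}_{+}^n)]$, and finally use the inclusions $\Lambda_r^{\!+} \subseteq \mathbb{S}_{+}^n$ and $\Lambda_r^{\!+} \subseteq \Lambda_r$ to majorize ${\rm dist}(X,\mathbb{S}_{+}^n) \le {\rm dist}(X,\Lambda_r^{\!+})$ and ${\textstyle\sum_{i=r+1}^n}\sigma_i(X) \le \sqrt{n-r}\,{\rm dist}(X,\Lambda_r) \le \sqrt{n-r}\,{\rm dist}(X,\Lambda_r^{\!+})$, which together give \eqref{Lebound2} after absorbing all constants into $\beta$.

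Finally, the consequence that \eqref{criterion2} implies the calmness of $\mathcal{S}_r$ at $0$ for every $\overline{X} \in \Gamma_r$ follows by a verbatim transfer of the closing normal-cone calculation in Theorem \ref{Lcriterion1}. Writing ${\rm gph}\,\mathcal{G}_r^{-1} = \mathcal{L}^{-1}(\Xi \times \Lambda_r^{\!+} \times \{0\})$ with the same surjective operator $\mathcal{L}(G,X,Y) = (X;Y;G-X+Y)$, I would apply \cite[Exercise 6.7 \& Proposition 6.41]{RW98} to obtain $\mathcal{N}_{{\rm gph}\mathcal{G}_r^{-1}}(0,\overline{X},\overline{X}) = \mathcal{L}^*[\mathcal{N}_\Xi(\overline{X}) \times \mathcal{N}_{\Lambda_r^{\!+}}(\overline{X}) \times \mathbb{X}]$; the Mordukhovich criterion then translates \eqref{criterion2} into the Aubin property of $\mathcal{G}_r^{-1}$ at the origin for $(\overline{X},\overline{X})$, which is the metric regularity of $\mathcal{G}_r$ and hence forces its subregularity, i.e., (ii). The chain (ii) $\Rightarrow$ (i) $\Rightarrow$ calmness established above then finishes the proof. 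I expect the main technical hurdle to lie in the calmness-to-(i) direction: without \eqref{cond-Xi} one cannot convert ${\rm dist}(X,\Omega)$ into ${\rm dist}(X,\Xi) + {\rm dist}(X,\mathbb{S}_{+}^n)$, and it is precisely the combination of \eqref{cond-Xi} with the inclusions $\Lambda_r^{\!+} \subseteq \Lambda_r \cap \mathbb{S}_{+}^n$ that makes the equivalence tight.
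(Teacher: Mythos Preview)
Your proposal is correct and follows essentially the same route as the paper's proof. The only noticeable variation is in the direction (i) $\Rightarrow$ calmness: you verify the calmness inequality directly for $X\in\Omega$ using ${\rm dist}(X,\Lambda_r^{\!+})\le\|X\|_*-\|X\|_{(r)}$, whereas the paper instead invokes the bound \eqref{R+set} (namely ${\rm dist}(X,\Lambda_r^{\!+})\le 2\,{\rm dist}(X,\mathbb{S}_+^n)+{\rm dist}(X,\Lambda_r)$) to reduce \eqref{Lebound2} to the error bound \eqref{Lebound1} of Theorem \ref{Lcriterion1}(i) and then quotes that theorem. Both arguments are valid; yours is marginally more direct for this implication, while the paper's makes the reduction to Theorem \ref{Lcriterion1} explicit.
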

 \begin{proof}
  Pick any $X\in\mathbb{B}(\overline{X},\varepsilon)$.
  By combining inequality \eqref{Lebound2} with \eqref{R+set}, it follows that
  \begin{align*}
   {\rm dist}(X,\Gamma_{\!r})
  &\le\beta\big[{\rm dist}(X,\Xi)+2{\rm dist}(X,\mathbb{S}_{+}^n)
    +{\textstyle\sum_{i=r+1}^n}\sigma_i(X)\big]\\
  &\le\beta\big[3{\rm dist}(X,\Omega)+{\textstyle\sum_{i=r+1}^n}\sigma_i(X)\big]
  \end{align*}
  So, part (i) of Theorem \ref{Lcriterion1} holds with $\delta=\varepsilon$
  and $\gamma=3\beta$, and $\mathcal{S}_{r}$ is calm at $0$ for $\overline{X}$.
  By following the same arguments as those for part (ii) of Theorem \ref{Lcriterion1},
  it is not hard to verify that $\mathcal{G}_{r}$ is subregular at
 $(\overline{X},\overline{X})$ for the origin if and only if part (i) holds.

 Next under inequality \eqref{cond-Xi} we argue that the calmness of $\mathcal{S}_{r}$
 at $0$ for $\overline{X}$ implies part (i). Indeed, since $\mathcal{S}_{r}$ is calm at
 $0$ for $\overline{X}$, there exist $\gamma'\ge0$ and $\delta'>0$ such that \eqref{calm-ineq}
 holds for all $X\in\mathbb{B}(\overline{X},\delta')$. Let $\varepsilon=\min(\delta',\varepsilon')$
 and pick any $X\in\mathbb{B}(\overline{X},\varepsilon)$.
 By following the same arguments as those for part (i) of Theorem \ref{Lcriterion1}, we have
 \begin{align*}
  {\rm dist}(X,\Gamma_{\!r})
  &\le (1\!+\!\sqrt{n}\gamma'){\rm dist}(X,\Omega)+\gamma'{\textstyle\sum_{i=r+1}^n}\sigma_i(X)\\
  &\le(1\!+\!\sqrt{n}\gamma')\kappa'\big[{\rm dist}(X,\Xi)+{\rm dist}(X,\mathbb{S}_{+}^n)\big]
      +\gamma'{\textstyle\sum_{i=r+1}^n}\sigma_i(X)\\
  &\le (1\!+\!\sqrt{n}\gamma')\kappa'\big[{\rm dist}(X,\Xi)+{\rm dist}(X,\Lambda_{r}^{\!+})\big]
      +\sqrt{n}\gamma'{\rm dist}(X,\Lambda_{r}^{\!+}).
 \end{align*}
 This means that part (i) holds with $\beta=(1\!+\!\sqrt{n}\gamma')(1\!+\!\kappa')$
 and $\varepsilon=\min(\delta',\varepsilon')$.

 From the equivalence between (i) and (ii), the local error bound in part (i) is implied by
 the metric regularity of $\mathcal{G}_{r}$ at $(\overline{X},\overline{X})$ for the origin
 or the Aubin property of its inverse at the origin for $(\overline{X},\overline{X})$.
 The latter is equivalent to $[-\mathcal{N}_{\Xi}(\overline{X})]\cap\mathcal{N}_{\Lambda_{r}^{\!+}}(\overline{X})=\{0\}$
 by following the similar arguments as those for the last part of Theorem \ref{Lcriterion1}.
 \end{proof}
 \begin{remark}\label{SN-cond}
 {\bf(a)} The condition \eqref{cond-Xi} is equivalent to the calmness at $\overline{X}$ of the mapping
  \[
    \mathcal{M}(X,Y):=\big\{W\in\mathbb{S}^n\,|\,X\!+\!W\in\Xi,Y\!+\!W\in\mathbb{S}_{+}^n\big\}
    \ \ {\rm for}\ X,Y\in\mathbb{S}^n.
  \]
  When the set $\Xi$ is convex, from \cite[Corollary 3]{Bauschke99} the condition
  ${\rm ri}(\Xi)\cap\mathbb{S}_{++}^n\ne\emptyset$ is enough for the condition
  \eqref{cond-Xi} to hold. Clearly, there are many classes of closed convex sets $\Xi$
  to satisfy this constraint qualification. When the set $\Xi$ is nonconvex, by noting that
  the Aubin property of $\mathcal{M}$ at $(\overline{X},\overline{X})$ for the origin is equivalent
  to $[-\mathcal{N}_{\Xi}(\overline{X})]\cap\mathcal{N}_{\mathbb{S}_{+}^n}(\overline{X})=\{0\}$.
  So, in this case, $[-\mathcal{N}_{\Xi}(\overline{X})]\cap\mathcal{N}_{\mathbb{S}_{+}^n}(\overline{X})=\{0\}$
  is enough for the condition \eqref{cond-Xi} to hold.

 \medskip
 \noindent
 {\bf(b)} The conditions \eqref{criterion1} and \eqref{criterion2} are pointed, that is,
 they depends only on the reference point. As will be illustrated in Section \ref{sec3.2},
 by using the characterization on $\mathcal{N}_{\Lambda_{r}}(\overline{X})$ and
 $\mathcal{N}_{\Lambda_{r}^{\!+}}(\overline{X})$, it is convenient to check if
 they hold or not. Although many weaker conditions are available to guarantee
 the calmness of $\mathcal{S}_{r}$ at $0$ for any $\overline{X}\in\mathcal{S}_{r}(0)$
 (see, e.g., \cite{Ngai09,Fabian10,Meng12,Kruger15}), they are all neighborhood-type
 and hard to check in practice. Gfrerer \cite{Gfrerer11} proposed a point-type
 criterion to identify the subregularity of a mapping, but as demonstrated below
 his criterion is only applicable to those $\overline{X}$ with ${\rm rank}(\overline{X})=r$.
 By \cite[Proposition 3.8]{Gfrerer11},
 the mapping $\mathcal{G}_{r}$ is subregular at $(\overline{X},\overline{X})$
 if $(0,0,0)\notin {\rm Cr}_0\mathcal{G}_{r}(\overline{X},\overline{X},0)$ where
  \begin{align*}
   {\rm Cr}_{0}\mathcal{G}_{r}(\overline{X},\overline{X},0)
   &\!:=\!\Big\{(F,S,T)\in\mathbb{S}^n\times\mathbb{S}^n\times\mathbb{S}^n\,|\,\exists
   (G^k,H^k)\in\mathcal{S}_{\mathbb{S}^n\times\mathbb{S}^n},R^k\in\mathcal{S}_{\mathbb{S}^n}
       \nonumber\\
   &\qquad\qquad\qquad (F^k,S^k,T^k)\to (F,S,T),t_k\downarrow 0,\\
   &\qquad\quad (-S^k,-T^k,R^k)\in\widehat{\mathcal{N}}_{{\rm gph}\mathcal{G}_{r}}
   ((\overline{X},\overline{X})\!+t_k(G^k,H^k),t_kF^k)\Big\},
 \end{align*}
 where $\mathcal{S}_{\mathbb{S}^n\times\mathbb{S}^n}$ and $\mathcal{S}_{\mathbb{S}^n}$
 respectively denote the unit sphere in the space $\mathbb{S}^n\times\mathbb{S}^n$ and $\mathbb{S}^n$.
 By Definition \ref{dir-normal}, it is not hard to verify that Gfrerer's criterion is equivalent to
 \begin{equation*}
  \!(0,0,W)\notin \mathcal{N}_{{\rm gph}\mathcal{G}_{r}}((\overline{X},\overline{X},0);(G,H,0))
   \ \ {\rm for\ all}\ ((G,H),W)\in \mathcal{S}_{\mathbb{S}^n\times\mathbb{S}^n}\times \mathcal{S}_{\mathbb{S}^n},
 \end{equation*}
 but unfortunately the exact characterization for the directional normal cone to ${\rm gph}\mathcal{G}_{r}$
 is unavailable. By \cite[Theorem 3.1]{Benko19} and \cite[Proposition 3.3]{Ye18},
 if ${\rm rank}(\overline{X})=r$ or the set $\Xi$ is convex, one may obtain a verifiable
 but stronger version of Gfrerer's criterion
 \begin{equation*}
   (W,-W)\notin \mathcal{N}_{\Xi}(\overline{X};G)\times\mathcal{N}_{\Lambda_{r}^{\!+}}(\overline{X};H)
   \ \ {\rm for\ all}\ W\ne 0,
   (G,H)\in[\mathcal{T}_{\Xi}(\overline{X})\times\mathcal{T}_{\Lambda_{r}^{\!+}}(\overline{X})]\backslash\{(0,0)\}.
 \end{equation*}
 Similarly, by applying Gfrerer's criterion to the mapping $\mathcal{F}_{\!r}$,
 if ${\rm rank}(\overline{X})=r$ or the set $\Omega$ is convex, one may obtain
 a verifiable but stronger version of Gfrerer's criterion
 \begin{equation*}
   (W,-W)\notin \mathcal{N}_{\Omega}(\overline{X};G)\times\mathcal{N}_{\Lambda_{r}}(\overline{X};H)
   \ \ {\rm for\ all}\ W\ne 0,
   (G,H)\in[\mathcal{T}_{\Omega}(\overline{X})\times\mathcal{T}_{\Lambda_{r}}(\overline{X})]\backslash\{(0,0)\}.
 \end{equation*}
 Recall that $\mathcal{N}_{\Xi}(\overline{X};G)\!=\!\mathcal{N}_{\mathcal{T}_{\Xi}(\overline{X})}(G)$
 if $\Xi$ is convex. When ${\rm rank}(\overline{X})=r$ and the set $\Xi$ or $\Omega$ is convex,
 by Proposition \ref{dnormal-cone}, the above two conditions are respectively equivalent to
 \begin{subnumcases}{}
  \label{Dir-ncone-rule1}
  [-\mathcal{N}_{\mathcal{T}_{\Xi}(\overline{X})}(G)]\cap\mathcal{N}_{\Lambda_{r}^{\!+}}(\overline{X})=\{0\}
  \quad{\rm for\ all}\ G\in\mathcal{T}_{\Xi}(\overline{X}),\\
  \label{Dir-ncone-rule2}
   [-\mathcal{N}_{\mathcal{T}_{\Omega}(\overline{X})}(G)]\cap\mathcal{N}_{\Lambda_{r}}(\overline{X})=\{0\}
   \quad{\rm for\ all}\ G\in\mathcal{T}_{\Omega}(\overline{X}).
 \end{subnumcases}
 When $\Xi$ or $\Omega$ is not an affine set, it is possible for \eqref{Dir-ncone-rule1}
 or \eqref{Dir-ncone-rule2} to be weaker than the criterion \eqref{criterion2} or \eqref{criterion1},
 but the former is only applicable to those $\overline{X}$ with ${\rm rank}(\overline{X})=r$.
 \end{remark}

 The following theorem implies that under the compactness of the set $\Omega$,
 the calmness of the mapping $\mathcal{S}_{r}$ at $0$ for all $X\in\mathcal{S}_{r}(0)$
 is equivalent to a global error bound for $\Gamma_{r}$.
 \begin{theorem}\label{gebound}
  Let $\Delta\subseteq\mathbb{X}$ be a compact set. If the mapping $\mathcal{S}_{r}$
  is calm at $0$ for any $X\in\mathcal{S}_{r}(0)$, then there exists a constant $\kappa'>0$
  such that for all $X\in\Delta\cap\Omega$
  \[
    {\rm dist}(X,\Gamma_{\!r})\le\kappa'[\|X\|_*-\|X\|_{(r)}].
  \]
 \end{theorem}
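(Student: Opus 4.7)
The plan is to prove the global error bound by contradiction, reducing it to the local error bound furnished by the calmness hypothesis together with Theorem \ref{Lcriterion1}(i). Concretely, I would suppose the conclusion fails and extract a sequence $\{X^k\}\subseteq\Delta\cap\Omega$ with
\[
  \mathrm{dist}(X^k,\Gamma_{\!r})>k\bigl[\|X^k\|_*-\|X^k\|_{(r)}\bigr]
\]
for every positive integer $k$. The compactness of $\Delta$ and the closedness of $\Omega$ would then allow me to pass to a subsequence $X^k\to\overline{X}\in\Delta\cap\Omega$.

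The first key step is to show that the limit point $\overline{X}$ actually lies in $\Gamma_{\!r}$. For this I would fix any $Y\in\Gamma_{\!r}$ (nonempty by assumption) and observe that $\mathrm{dist}(X^k,\Gamma_{\!r})\le\|X^k-Y\|_F$ is uniformly bounded in $k$ by $\sup_{Z\in\Delta}\|Z-Y\|_F<\infty$. Dividing the displayed inequality by $k$ then forces $\|X^k\|_*-\|X^k\|_{(r)}\to 0$, which by continuity of $\|\cdot\|_*$ and $\|\cdot\|_{(r)}$ yields $\|\overline{X}\|_*-\|\overline{X}\|_{(r)}=0$; combined with $\overline{X}\in\Omega$ this gives $\overline{X}\in\mathcal{S}_{r}(0)=\Gamma_{\!r}$.

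Next I would invoke the hypothesis that $\mathcal{S}_{r}$ is calm at $0$ for $\overline{X}$. Via the equivalent Lipschitz-type local error bound in Theorem \ref{Lcriterion1}(i), or directly from the definition \eqref{calm-def} of calmness together with the identity for $\mathrm{dist}(0,\mathcal{S}_r^{-1}(X))$ used in the proof of Theorem \ref{Lcriterion1}, there exist $\gamma\ge 0$ and $\delta>0$ such that for every $X\in\Omega\cap\mathbb{B}(\overline{X},\delta)$,
\[
  \mathrm{dist}(X,\Gamma_{\!r})\le\gamma\bigl[\|X\|_*-\|X\|_{(r)}\bigr].
\]
Since $X^k\to\overline{X}$ and $X^k\in\Omega$, this local inequality applies to $X^k$ for all sufficiently large $k$. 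Combining it with the contradiction hypothesis yields $(k-\gamma)\bigl[\|X^k\|_*-\|X^k\|_{(r)}\bigr]<0$ once $k>\gamma$, which is impossible since the bracketed quantity is nonnegative (and if it vanishes, then $X^k\in\Gamma_{\!r}$ and $\mathrm{dist}(X^k,\Gamma_{\!r})=0>0$ is equally absurd). This contradiction produces the desired uniform constant $\kappa'$.

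The only nontrivial step is making sure the limit point $\overline{X}$ is genuinely feasible, i.e.\ $\overline{X}\in\Gamma_{\!r}$, because without this one cannot invoke calmness at $\overline{X}$; this is where I would use nonemptiness of $\Gamma_{\!r}$ to bound $\mathrm{dist}(X^k,\Gamma_{\!r})$ uniformly and thereby force $\|X^k\|_*-\|X^k\|_{(r)}\to 0$. The rest is a standard gluing of local error bounds into a global one via compactness.
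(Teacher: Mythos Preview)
Your proof is correct but takes a different route from the paper. The paper argues directly: it covers the compact set $\Gamma_{\!r}\cap\Delta$ by finitely many calmness balls via Heine--Borel, takes $\widehat{\kappa}$ as the maximum of the finitely many local constants, and then handles the remaining region $\widetilde{\Omega}=\mathrm{cl}[(\Omega\cap\Delta)\setminus D]$ by showing that $\|Z\|_*-\|Z\|_{(r)}$ is bounded below by some $\widetilde{\kappa}>0$ there (otherwise a limit point of a minimizing sequence would land in $\Gamma_{\!r}\cap\Delta\subseteq D$, a contradiction) while $\mathrm{dist}(\cdot,\Gamma_{\!r}\cap\Delta)$ is bounded above by some $c$; the final constant is $\kappa'=\max(\widehat{\kappa},c/\widetilde{\kappa})$. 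Your argument instead proceeds by contradiction, extracting a convergent subsequence and using boundedness of $\mathrm{dist}(X^k,\Gamma_{\!r})$ to force the limit into $\Gamma_{\!r}$, then invoking calmness once at that single limit point. Both are standard compactness devices for globalizing a local error bound; your version is shorter and avoids the explicit two-region split, while the paper's version is more constructive in spirit (it exhibits $\kappa'$ as a maximum of finitely many identifiable quantities, even if those quantities are themselves not explicit).
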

 \begin{proof}
  Since the mapping $\mathcal{S}_{r}$ is calm at $0$ for all $X\in\Gamma_{\!r}$,
 for every $X\!\in\Gamma_{\!r}$ there exist $\kappa_{\!X}\!\ge0$ and $\varepsilon_{\!X}\!>0$ such that
 for all $Z\!\in\Omega\cap\mathbb{B}(X,\varepsilon_{\!X})$,
 \[
   {\rm dist}(Z,\Gamma_{\!r})\le\kappa_{\!X}\big[\|Z\|_*-\|Z\|_{(r)}\big].
 \]
 Since $\bigcup_{X\in\Gamma_{\!r}\cap\Delta}\mathbb{B}^{\circ}(X,\varepsilon_{X})$
 is an open covering of the compact set $\Gamma_{\!r}\cap\Delta$, by Heine-Borel covering theorem,
 there exist $X^1,\ldots,X^p\in\Gamma_{\!r}\cap\Delta$ such that
 $\Gamma_{\!r}\cap\Delta\subseteq\bigcup_{i=1}^p\mathbb{B}^{\circ}(X^i,\varepsilon_{\!X^i})$.
 Write $\widehat{\kappa}:=\max_{1\le i\le p}\kappa_{\!X^i}$.
 From the last inequality, it then follows that
 \[
   {\rm dist}(Z,\Gamma_{\!r})\le\widehat{\kappa}\big[\|Z\|_*-\|Z\|_{(r)}\big]
   \quad{\rm for\ all}\ Z\!\in{\textstyle\bigcup_{i=1}^p}\big[(\Omega\cap\Delta)\cap\mathbb{B}(X^i,\varepsilon_{\!X^i})\big].
 \]
 Let $D=\bigcup_{i=1}^p[(\Omega\cap\Delta)\cap\mathbb{B}(X^i,\varepsilon_{\!X^i})]$.
 Consider the set $\widetilde{\Omega}={\rm cl}[(\Omega\cap\Delta)\backslash D]$.
 Then, there exists $\widetilde{\kappa}>0$ such that
 $\min_{Z\in\widetilde{\Omega}}[\|Z\|_*-\!\|Z\|_{(r)}]\ge\widetilde{\kappa}$.
 If not, there exists a sequence $\{Z^k\}\subseteq\widetilde{\Omega}$ such that
 $[\|Z^k\|_*-\!\|Z^k\|_{(r)}]\le 1/k$, which by the compactness of the set $\widetilde{\Omega}$
 and the continuity of the function $Z\mapsto\|Z\|_*-\!\|Z\|_{(r)}$ means that
 there is a cluster point, say $\overline{Z}\in\widetilde{\Omega}$, of $\{Z^k\}$ such that
 $\|\overline{Z}\|_*-\|\overline{Z}\|_{(r)}=0$. Then
 $\overline{Z}\in\Gamma_{\!r}\cap\Delta\subseteq\bigcup_{i=1}^p\mathbb{B}^{\circ}(X^i,\varepsilon_{\!X^i})$,
 a contradiction to the fact that $\overline{Z}\in\widetilde{\Omega}$. In addition,
 since the sets $\widetilde{\Omega}$ and $\Gamma_{\!r}\cap\Delta$ are compact,
 there exists a constant $c>0$ such that ${\rm dist}(Z,\Gamma_{\!r}\cap\Delta)\le c$
 for all $Z\in\widetilde{\Omega}$. Together with $\min_{Z\in\widetilde{\Omega}}[\|Z\|_*-\!\|Z\|_{(r)}]\ge\widetilde{\kappa}$,
 for any $Z\in\widetilde{\Omega}$ we have
 \(
   {\rm dist}(Z,\Gamma_{\!r}\cap\Delta)\le(c/\widetilde{\kappa})\big[\|Z\|_*\!-\!\|Z\|_{(r)}\big].
 \)
 Consequently, for all $Z\in\widetilde{\Omega}$,
 $ {\rm dist}(Z,\Gamma_{\!r})\le(c/\widetilde{\kappa})\big[\|Z\|_*\!-\!\|Z\|_{(r)}\big]$.
 Along with the last inequality, the desired result holds with
 $\kappa'=\max(\widehat{\kappa},c/\widetilde{\kappa})$. The proof is completed.
 \end{proof}
  \subsection{Some examples}\label{sec3.2}

 In this part we use the criteria \eqref{criterion1} and \eqref{criterion2} to find
 some closed sets $\Omega$ for which the associated mapping $\mathcal{S}_r$ with
 any $r\in\{1,2,\ldots,n\}$ is calm at $0$ for all $\overline{X}\in\mathcal{S}_r(0)$.
 \begin{example}\label{example3.1}
  Fix any $\varrho>0$. Let $\Omega=\!\{Z\in\mathbb{X}\,|\,|\!\|Z|\!\|\le\varrho\}$
  where $|\!\|\cdot|\!\|$ is an arbitrary matrix norm with dual norm $|\!\|\cdot|\!\|_*$.
  Fix any $\overline{X}\in\Gamma_{\!r}$.
  Pick any $H\in[-\mathcal{N}_{\Omega}(\overline{X})]\cap\mathcal{N}_{\Lambda_r}(\overline{X})$.
  From $H\in\mathcal{N}_{\Lambda_r}(\overline{X})$ and Lemma \ref{Ncone-MRr}, we have
  $\langle H,\overline{X}\rangle=0$. From $-H\in\mathcal{N}_{\Omega}(\overline{X})$ and
  the convexity of $\Omega$, for any $Z\in\Omega$ we have
  $0\le\langle H,Z\!-\!\overline{X}\rangle=\langle H,Z\rangle$, which implies that
  \[
    \varrho|\!\|\!-\!H|\!\|_*=\max_{|\!\|Z|\!\|\le\varrho}\langle Z,-H\rangle
    =\max_{Z\in\Omega}\langle Z,-H\rangle\le 0.
  \]
  So, $H=0$ and the criterion \eqref{criterion1} holds at $\overline{X}$.
  When $\Omega=\{Z\in\mathbb{X}\,|\,\|Z\|_F=\varrho\}$, by noting that
  $\mathcal{N}_{\Omega}(\overline{X})=\{\alpha\overline{X}\,|\,\alpha\in\mathbb{R}\}$,
  one can check that the criterion \eqref{criterion1} holds at any $\overline{X}\in\Gamma_{\!r}$.

  Let $\Xi=\!\{Z\in\mathbb{S}^{n}\,|\,|\!\|Z|\!\|\le\varrho\}$.
  Since $\mathcal{N}_{\Lambda_r^{\!+}}(\overline{X})
  \subseteq\mathcal{N}_{\Lambda_r}(\overline{X})+\mathcal{N}_{\mathbb{S}_{+}^n}(\overline{X})$
  by inequality \eqref{R+set}, to verify that the condition \eqref{criterion2} holds,
  we pick any $H\in[-\mathcal{N}_{\Xi}(\overline{X})]\cap(\mathcal{N}_{\Lambda_r}(\overline{X})+\mathcal{N}_{\mathbb{S}_{+}^n}(\overline{X}))$
  and argue that $H=0$. Indeed, since $H\in\mathcal{N}_{\Lambda_r}(\overline{X})+\mathcal{N}_{\mathbb{S}_{+}^n}(\overline{X})$,
  there exist $H^1\in\mathcal{N}_{\Lambda_r}(\overline{X})$ and $H^2\in\mathcal{N}_{\mathbb{S}_{+}^n}(\overline{X})$
  such that $H=H^1+H^2$. By Lemma \ref{lemma21} and \ref{Ncone-MRr}, we have $\langle H^1,\overline{X}\rangle=0$
  and $\langle H^2,\overline{X}\rangle=0$. So, $\langle\overline{X},H\rangle=0$.
  Using the same arguments as above yields that $H=0$.
 \end{example}
 \begin{example}\label{example3.2}
  Let $\Xi=\{X\!\in\mathbb{S}^n\,|\,\mathcal{A}(X)=e\}$ where $\mathcal{A}\!:\mathbb{S}^n\to\mathbb{R}^n$
  is a linear mapping defined by $\mathcal{A}(X)\!:={\rm diag}(X)$.
  Consider any $\overline{X}\in\Gamma_{\!r}$. Since $\mathcal{N}_{\Lambda_r^{\!+}}(\overline{X})
  \subseteq\mathcal{N}_{\Lambda_r}(\overline{X})+\mathcal{N}_{\mathbb{S}_{+}^n}(\overline{X})$,
  to verify that the criterion \eqref{criterion2} holds, we pick any
  $H\!\in{\rm Range}(\mathcal{A}^*)\cap\big[\mathcal{N}_{\mathbb{S}_{+}^n}(\overline{X})
                +\mathcal{N}_{\Lambda_{r}}(\overline{X})\big]$ and argue that $H=0$.
  Clearly, there exist $y\in\mathbb{R}^n,H^1\!\in\mathcal{N}_{\mathbb{S}_{+}^n}(\overline{X})$
  and $H^2\in\mathcal{N}_{\Lambda_{r}}(\overline{X})$ such that
  $H={\rm Diag}(y)\!=H^1+H^2$. From $H^1\!\in\mathcal{N}_{\mathbb{S}_{+}^n}(\overline{X})$,
  we have $H^1\overline{X}=0$. By Lemma \ref{Ncone-MRr}, $H^2\overline{X}=0$.
  Hence, ${\rm Diag}(y)\overline{X}=0$. Along with ${\rm diag}(\overline{X})=e$,
  we obtain $y=0$ and then $H=0$. For the set $\Omega=\Xi\cap\mathbb{S}_{+}^n$,
  the associated $\Gamma_{\!r}$ is the composite rank constraint set in \cite{Pietersz04},
  and when $r=1$, it is the PSD matrix reformulation of the max-cut problem \cite{Goemans95}.
 \end{example}
 \begin{example}\label{example3.3}
  Fix any $b_1\in\mathbb{R}$ and $b_2\in\mathbb{R}\backslash\{0\}$.
  Let $B,C\in\mathbb{S}^n$ be such that $B-(b_1/b_2)C$ is nonsingular. Let
  $\Xi=\!\{X\in\mathbb{S}^n\,|\,\mathcal{A}(X)=b\}$ where $\mathcal{A}\!:\mathbb{S}^n\to\mathbb{R}^2$
  is a linear mapping defined by
  $\mathcal{A}(X)\!:=\left(\begin{matrix}
  \langle B,X\rangle\\ \langle C,X\rangle
  \end{matrix}\right)$.
  Consider any $\overline{X}\!\in\Gamma_{\!r}$. To verify that the criterion \eqref{criterion2}
  holds at $\overline{X}$, we pick any $H\in{\rm Range}(\mathcal{A}^*)
  \cap\big[\mathcal{N}_{\mathbb{S}_{+}^n}(\overline{X})+\mathcal{N}_{\Lambda_r}(\overline{X})\big]$
  and argue that $H=0$. Since ${\rm Range}(\mathcal{A}^*)=\!\{y_1B+y_2C\,|\, y_1\in\mathbb{R},y_2\in\mathbb{R}\}$,
  there exist $y_1\in\mathbb{R}, y_2\in\mathbb{R}, H^1\!\in\mathcal{N}_{\mathbb{S}_{+}^n}(\overline{X})$
  and $H^2\in\mathcal{N}_{\Lambda_{r}}(\overline{X})$ such that
  $H=y_1B+y_2C=H^1+H^2$. Since $\langle \overline{X},H^1\!+\!H^2\rangle=0$,
  we have $y_1b_1 +y_2b_2=0$, which implies that $H=y_1(B-(b_1/b_2)C)$.
  Together with $H^1\overline{X}=0$ and $H^2\overline{X}=0$, we obtain $y_1(B-\!(b_1/b_2)C)\overline{X}=0$,
  which by the assumption on $B$ and $C$ implies $y_1=0$ and then $y_2=0$. Consequently, $H=0$.
  For the set $\Omega=\Xi\cap\mathbb{S}_{+}^n$, when $r=1$, the set $\Gamma_{\!r}$ is exactly
  the feasible set of the generalized eigenvalue problem \cite{Ge16}.

  When $B=0$ and $b_1=0$, $\Xi=\!\{X\in\mathbb{S}^n\,|\,\langle C,X\rangle=b_2\}$ for
  an arbitrary nonsingular $C\in\mathbb{S}^n$. The above arguments show that
  the associated $\mathcal{S}_{r}$ is calm at $0$ for all $\overline{X}\in\Gamma_{\!r}$.
  The set $\Gamma_{\!r}$ with $\Omega=\Xi\cap\mathbb{S}_{+}^n$ for $C=I$
  often appears in quantum state tomography \cite{Gross11}.
 \end{example}
 \begin{example}\label{example3.4}
  Let $\Xi=\!\{X\!\in\mathbb{S}^{n}\,|\, X_{11}\!=\!1,X_{ii}\!-\frac{1}{2}(X_{1i}+\!X_{i1})\!=0\ {\rm for}\ i=2,\ldots,n\}$.
  Consider any $\overline{X}\in\Gamma_{\!r}$. In order verify that the criterion \eqref{criterion2}
  holds at $\overline{X}$, we pick any $H\!\in{\rm Range}(\mathcal{A}^*)
  \cap[\mathcal{N}_{\mathbb{S}_{+}^n}(\overline{X})+\mathcal{N}_{\Lambda_r}(\overline{X})]$
  and argue that $H=0$, where for any $y\in\mathbb{R}^n$
  \[
    \mathcal{A}^*(y)
    =\left[\begin{matrix}
         y_{1} & -\frac{1}{2}y_{2} &-\frac{1}{2}y_{3} & \cdots & -\frac{1}{2}y_{n} \\
         -\frac{1}{2}y_{2} & y_{2} & 0 & \cdots & 0 \\
         -\frac{1}{2}y_{3} & 0 & y_{3} & \cdots & 0 \\
         \vdots & \vdots & \vdots& \ddots & \vdots \\
         -\frac{1}{2}y_{n} & 0 & 0 & \cdots & y_{n} \\
       \end{matrix}\right]\in\mathbb{S}^n.
  \]
  Clearly, there exist $y\in\mathbb{R}^{n},H^1\!\in\mathcal{N}_{\mathbb{S}_{+}^n}(\overline{X})$
  and $H^2\!\in\mathcal{N}_{\Lambda_r}(\overline{X})$ such that
  $H=H^1+H^2=\!\mathcal{A}^*(y)$. Since $H\overline{X}=0$, for $i=2,3,\ldots,n$,
  we have
  $0=(H\overline{X})_{ii}=y_i(\overline{X}_{ii}-\overline{X}_{1i}/2)$ and
  $0=(H\overline{X})_{i1}=y_i(\overline{X}_{i1}-\overline{X}_{11}/2)$.
  Along with $\overline{X}_{11}=1$ and $\overline{X}_{ii}-\overline{X}_{1i}=0$
  for $i=2,\ldots,n$, we obtain $y_i=0$ for all $i=2,\ldots,n$,
  which implies that $0=\langle H,\overline{X}\rangle=y_{1}\overline{X}_{11}=y_1$.
  Consequently, $H=0$. When $r=1$, the set $\Gamma_{\!r}$ associated to
  $\Omega=\mathbb{S}_{+}^n\cap\Xi$ is precisely the PSD matrix reformulation
  for the unconstrained 0-1 quadratic program.
 \end{example}
 \begin{example}\label{example3.5}
  Let $\Xi=\!\big\{X\!\in\mathbb{S}^{n}\,|\,\langle I,X_{ii}\rangle=1,\langle I,X_{ij}\rangle=0,\,i\!\neq\!j\in\{1,\ldots,k\}\big\}$
  with $n=kp$, where $X_{ij}\in\mathbb{S}^p$ is the $(i,j)$th block of $X$.
  Fix any $\overline{X}\in\Gamma_{\!r}$. Let $\mathcal{A}\!:\mathbb{S}^n\to\mathbb{R}^{k^2}$
  be a linear mapping given by $[\mathcal{A}(X)]_{(i-1)k+j}=\langle I,X_{ij}\rangle$
  for $i,j=1,\ldots,k$. To verify that \eqref{criterion2} holds at $\overline{X}$,
  we pick any $H\in{\rm Range}(\mathcal{A}^*)\cap\big[\mathcal{N}_{\mathbb{S}_{+}^n}(\overline{X})
  +\mathcal{N}_{\Lambda_{r}}(\overline{X})\big]$ and argue $H=0$, where
  \[
    \mathcal{A}^*(y)
    =\left[\begin{matrix}
         y_{11}I & y_{12}I & \cdots & y_{1k}I \\
         y_{21}I & y_{22}I & \cdots & y_{2k}I \\
         \vdots & \vdots & \ddots & \vdots \\
         y_{k1}I & y_{k2}I & \cdots & y_{kk}I \\
      \end{matrix}\right]\quad{\rm for}\ y\in\mathbb{R}^{k^2}.
  \]
  Clearly, there exist $y_{ij}\in\mathbb{R}$ for $i,j=1,\ldots,k$,
  $H^1\in\mathcal{N}_{\mathbb{S}_{+}^{n}}(\overline{X})$
  and $H^2\in\mathcal{N}_{\Lambda_r}(\overline{X})$ such that
  $H=\mathcal{A}^*(y)=H^1+H^2$. Multiplying this equality by $\overline{X}$
  yields that $H\overline{X}=H^1\overline{X}+H^2\overline{X}$.
  Notice that $H^1\overline{X}+H^2\overline{X}=0$. So, for all $i,j=1,2,\ldots,k$,
  $0=(H\overline{X})_{ij}=\sum_{t=1}^ky_{it}\overline{X}_{tj}$, and consequently
  $0=\langle I,\sum_{t=1}^ky_{it}\overline{X}_{tj}\rangle=y_{ij}$.
  This means that $H=0$.

  When $r=1$, the set $\Gamma_{\!r}$ associated to $\Omega=\Xi\cap\mathbb{S}_{+}^n$
  is precisely the PSD reformulation for the orthogonal matrix set
  $\{Y\in\mathbb{R}^{p\times k}\ |\ Y^{\mathbb{T}}Y=I\}$
  with $X={\rm vec}(Y){\rm vec}(Y)^{\mathbb{T}}$.
 \end{example}
 \begin{example}\label{example3.6}
  Let $\Omega=\mathbb{R}_{+}^{n\times n}\cap\Delta$ with $\Delta=\{X\!\in\mathbb{R}^{n\times n}\,|\, Xe=e\}$.
  Consider any $\overline{X}\!\in\Gamma_{\!r}$. To verify that the criterion \eqref{criterion1}
  holds, we pick any $H\in[-\mathcal{N}_{\Omega}(\overline{X})]\cap\mathcal{N}_{\Lambda_{r}}(\overline{X})$
  and argue that $H=0$. Since $\mathcal{N}_{\Omega}(\overline{X})
  \!=\mathcal{N}_{\mathbb{R}_{+}^{n\times n}}(\overline{X})+\mathcal{N}_{\Delta}(\overline{X})$,
  there exist $y\in\mathbb{R}^n,H^1\!\in\mathcal{N}_{\mathbb{R}_{+}^{n\times n}}(\overline{X})$
  and $H^2\!\in\mathcal{N}_{\Lambda_{r}}(\overline{X})$ such that
  $H=ye^{\mathbb{T}}\!-\!H^1=H^2$. Note that $\langle H^1,\overline{X}\rangle=0$
  and $\langle H^2,\overline{X}\rangle =0$. Hence,
  $0=\!\langle ye^{\mathbb{T}},\overline{X}\rangle =\langle y,e\rangle$.
  Since $H^2\overline{X}^{\mathbb{T}}=0$ and $-H^1\in\mathbb{R}_{+}^{n\times n}$,
  we have $ny=ye^{\mathbb{T}}e=y(\overline{X}e)^{\mathbb{T}}e=(H^1\!+\!H^2)\overline{X}^{\mathbb{T}}e
  =H^1\overline{X}^{\mathbb{T}}e\le0$,
  which along with $\langle y,e\rangle=0$ implies that $y=0$.
  Together with $\overline{X}^{\mathbb{T}}\!H^2\!=0$,
  it follows that $\overline{X}^{\mathbb{T}}\!H^1=0$. Thus,
  $0=e^{\mathbb{T}}\overline{X}^{\mathbb{T}}H^1=e^{\mathbb{T}}H^1.$
  Combining this with $-H^1\in\mathbb{R}_{+}^{n\times n}$ yields that $H^1=0$.
  So, $H=0$. The set $\Gamma_{\!r}$ associated to such $\Omega$ appears
  in the transition matrix estimation in low-rank Markov chains \cite{LiWZ18}.
 \end{example}
 \begin{example}\label{example3.7}
  Let $\Omega=\mathbb{R}_{+}^{n\times n}\cap\Xi$ with
  $\Delta=\{X\!\in\mathbb{R}^{n\times n}\,|\, Xe=e,X^{\mathbb{T}}e=e\}$.
  Consider any $\overline{X}\in\Gamma_{\!r}$. To verify that the criterion
  \eqref{criterion1} holds, we pick any $H\in[-\mathcal{N}_{\Omega}(\overline{X})]\cap\mathcal{N}_{\Lambda_{r}}(\overline{X})$
  and argue that $H=0$. Since $\mathcal{N}_{\Omega}(\overline{X})
  =\mathcal{N}_{\mathbb{R}_{+}^{n\times n}}(\overline{X})+\mathcal{N}_{\Delta}(\overline{X})$,
  by the expression of $\mathcal{N}_{\Delta}(\overline{X})$, there exist
  $y\in\mathbb{R}^n, z\in\mathbb{R}^n$, $H^1\!\in\mathcal{N}_{\mathbb{R}_{+}^{n\times n}}(\overline{X})$
  and $H^2\in\mathcal{N}_{\Lambda_{r}}(\overline{X})$ such that
  \begin{equation}\label{Hequa0}
     H=ye^{\mathbb{T}}+ez^{\mathbb{T}}-H^1=H^2.
  \end{equation}
  Since $H^2\overline{X}^{\mathbb{T}}\!=0$, we have
  $ye^{\mathbb{T}}\!+ez^{\mathbb{T}}\overline{X}^{\mathbb{T}}\!=H^1\overline{X}^{\mathbb{T}}.$
  Notice that $H^1\in\mathcal{N}_{\mathbb{R}_{+}^{n\times n}}(\overline{X})$. Hence, $(H^1\overline{X}^{\mathbb{T}})_{ii}=0$,
  and $(H^1\overline{X}^{\mathbb{T}})_{ij}\le0$ for all $i,j\in\{1,\ldots,n\}$. Thus,
  \begin{subnumcases}{}
  \label{yplusz0}
   y_i+\overline{X}_{i1}z_1+\ldots+\overline{X}_{in}z_n=0\ \ {\rm for\ all\ } i=1,\ldots,n;\\
   \label{yplusz1}
   y_i+\overline{X}_{j1}z_1+\ldots+\overline{X}_{jn}z_n\le0\ \ {\rm for\ all\ } i,j=1,\ldots,n.
  \end{subnumcases}
  Adding the inequalities in \eqref{yplusz1} from $j=1$ to $n$ yields that
  $ny_i+z_1+\ldots+z_n\le0$ for $i=1,\ldots,n$.
  Notice that $\langle H^1,\overline{X}\rangle=0$
  and $\langle H^2,\overline{X}\rangle =0$. From \eqref{Hequa0},
  we have $0=\langle ye^{\mathbb{T}}+ez^{\mathbb{T}},\overline{X}\rangle
  =\langle y+z,e\rangle$. From the two sides, we have
  $y_i\le\frac{1}{n}\langle y,e\rangle$ for $i=1,\ldots,n$.
  This means that $y_1=\ldots=y_n$ (if not, there is an index $k$
  such that $y_k<\max\limits_{1\le i\le n}y_i$, and then
  $\frac{1}{n}\langle y,e\rangle=\frac{1}{n}(\sum_{i\neq k}y_i+y_k)
  <\max\limits_{1\le i\le n}y_i\le\frac{1}{n}\langle y,e\rangle)$.
  Since $\overline{X}^{\mathbb{T}}\!H^2=0$, by \eqref{Hequa0},
  $\overline{X}^{\mathbb{T}}\!ye^{\mathbb{T}}+ez^{\mathbb{T}}=\!\overline{X}^{\mathbb{T}}H^1.$
  Since $H^1\!\in\mathcal{N}_{\mathbb{R}_{+}^{n\times n}}(\overline{X})$,
  $(\overline{X}^{\mathbb{T}}H^1)_{ij}\le0$ for all $i,j =1,\ldots,n$. Thus,
  $z_i+\overline{X}_{1j}y_1+\ldots+\overline{X}_{nj}y_n\le0$
  for all $i,j=1,\ldots,n$. Adding these inequalities from $j=1$ to $n$ yields
  that $nz_i+y_1+\ldots+y_n\le0$ for $i=1,\ldots,n$.
  Together with $\langle y+z,e\rangle=0$, it implies that
  $z_i\le\frac{1}{n}\langle z,e\rangle$ for all $i=1,\ldots,n.$
  This means that $z_1=z_2=\ldots=z_n$. Combining with $y_1=y_2=\ldots=y_n$ and \eqref{yplusz0},
  we have $y_1+z_1=0$. This implies that $ye^{\mathbb{T}}+ez^{\mathbb{T}}=0$.
  Together with \eqref{Hequa0}, $-H^1=H^2$. Multiplying this equality by
  $\overline{X}^{\mathbb{T}}$ yields that $H^1\overline{X}^{\mathbb{T}}=0$.
  Thus, $0=H^1\overline{X}^{\mathbb{T}}e=H^1e$.
  Note that $H^1\le0$ since $H^1\in\mathcal{N}_{\mathbb{R}_{+}^{n\times n}}(\overline{X})$.
  Consequently, we have $H^1=0$ and $H=0$. The set $\Gamma_{\!r}$ associated to such $\Omega$
  often appears in those problems aiming to seek a low-rank doubly stochastic matrix \cite{Yang16}.
 \end{example}
 \begin{remark}
  {\bf(a)} For Example \ref{example3.1} and \ref{example3.2}, the calmness of the mapping
  $\mathcal{S}_{r}$ at $0$ for any $\overline{X}\in\Gamma_{\!r}$ was shown in \cite{BiPan16}
  by constructing a point in $\Gamma_{\!r}$ technically. Here we achieve it by checking
  the criterion \eqref{criterion1} directly. For Example \ref{example3.3}-\ref{example3.7},
  to the best our knowledge, there is no work to discuss the calmness of the associated
  rank constraint system.

  \medskip
  \noindent
  {\bf(b)} For the above examples, the criterion \eqref{criterion1} or \eqref{criterion2}
  is shown to hold at any $\overline{X}\in\Gamma_{\!r}$. By \cite[Proposition 4.1]{Attouch10},
  for Example \ref{example3.1}, \ref{example3.6} and \ref{example3.7}, the associated
  function $F(X,Y)\!=\frac{1}{2}\|X-\!Y\|_F^2+\delta_{\Omega}(X)+\delta_{\Lambda_{r}}(Y)$
  for $X,Y\in\mathbb{X}$ has the KL property of exponent $1/2$ at $(\overline{X},\overline{X})$;
  while for Example \ref{example3.2}-\ref{example3.5}, the function $G(X,Y)\!=\frac{1}{2}\|X-\!Y\|_F^2+\delta_{\Xi}(X)+\delta_{\Lambda_{r}^{\!+}}(Y)$
  for $X,Y\in\mathbb{S}^n$ has the KL property of exponent $1/2$ at $(\overline{X},\overline{X})$.
  Thus, for these examples, the proximal alternating minimization method \cite{Attouch10}
  can seek a point of $\Gamma_{\!r}$ in a linear rate.

  \medskip
  \noindent
  {\bf(c)} It is easy to verify that the above $\Omega$ except the one in Example \ref{example3.3}
  are all compact. So, the calmness of the associated $\mathcal{S}_{r}$ implies
  the global error bound as in Theorem \ref{gebound}. In addition, since the set $\Omega$
  in the above examples are regular, when $\Omega$ is from Example \ref{example3.1},
  \ref{example3.6} and \ref{example3.7}, for any $X\in\Gamma_{\!r}$ with ${\rm rank}(X)=r$,
  \(
    \widehat{\mathcal{N}}_{\Gamma_{\!r}}(X)=\mathcal{N}_{\Gamma_{\!r}}(X)
    =\mathcal{N}_{\Omega}(X)+\mathcal{N}_{\Lambda_{r}}(X),
  \)
  and for any $X\in\Gamma_{\!r}$ with ${\rm rank}(X)<r$,
  \(
    \mathcal{N}_{\Omega}(X)\subseteq\widehat{\mathcal{N}}_{\Gamma_{\!r}}(X)
    \subseteq\mathcal{N}_{\Gamma_{\!r}}(X)
    \subseteq\mathcal{N}_{\Omega}(X)+\mathcal{N}_{\Lambda_{r}}(X);
  \)
 when $\Omega$ is from Example \ref{example3.2}-\ref{example3.5}, for any $X\in\Gamma_{\!r}$ with ${\rm rank}(X)=r$,
  \(
    \widehat{\mathcal{N}}_{\Gamma_{\!r}}(X)=\mathcal{N}_{\Gamma_{\!r}}(X)
    =\mathcal{N}_{\Xi}(X)+\mathcal{N}_{\Lambda_{r}^{\!+}}(X)
    =\mathcal{N}_{\Xi}(X)+\mathcal{N}_{\mathbb{S}_{+}^n}(X)+\mathcal{N}_{\Lambda_{r}}(X),
  \)
  and for any $X\in\Gamma_{\!r}$ with ${\rm rank}(X)<r$,
  \(
    \mathcal{N}_{\Xi}(X)+\mathcal{N}_{\mathbb{S}_{+}^n}(X)\subseteq\widehat{\mathcal{N}}_{\Gamma_{\!r}}(X)
    \subseteq\mathcal{N}_{\Gamma_{\!r}}(X)
    \subseteq\mathcal{N}_{\Xi}(X)+\mathcal{N}_{\mathbb{S}_{+}^n}(X)+\mathcal{N}_{\Lambda_{r}}(X).
  \)
 \end{remark}

  To close this part, we demonstrate via an example that the calmness of $\mathcal{S}_r$
  associated to the above $\Omega$ can be used to achieve the calmness of
  $\mathcal{S}_r$ with a more complicated $\Omega$.
 \begin{example}\label{example3.8}
  Let $\Xi=\!\{X\!\in\mathbb{S}^{n+1}\,|\,{\rm diag}(X)=e,\mathcal{A}(X)\le b\}$,
  where $b=(b_1,\ldots,b_m)^{\mathbb{T}}$ is a given vector, and the linear mapping
  $\mathcal{A}\!:\mathbb{S}^{n+1}\to\!\mathbb{R}^m$ is defined as follows:
  \[
    \mathcal{A}(X):=(\langle A_1,X\rangle,\ldots,\langle A_m,X\rangle)^{\mathbb{T}}
    \ \ {\rm with}\ \
    A_i=\left(\begin{matrix}
              0 & c_i^{\mathbb{T}}\\
              c_i & Q_i
        \end{matrix}\right)\ {\rm for}\ i=1,\ldots,m.
  \]
  When $r=1$, the set $\Gamma_{\!r}$ associated to $\Omega=\Xi\cap\mathbb{S}_{+}^{n+1}$
  is precisely the feasible set of the PSD matrix reformulation for
  the following binary quadratic programming problem
  \begin{align*}
  &\min_{x\in\{-1,1\}^n}\langle x,Q_0x\rangle+2\langle c_0,x\rangle\nonumber\\
  &\quad\ {\rm s.t.}\ \ \langle x,Q_ix\rangle+2\langle c_i,x\rangle\le b_i,\ \ i=1,2,\ldots,m.
  \end{align*}
  Next we use Example \ref{example3.2} to argue that the mapping $\mathcal{S}_{1}$ associated to
  $\Gamma_{\!1}$ is calm at $0$ for any $\overline{X}\in\Gamma_{\!1}$.
  Write $\widehat{\Gamma}\!:=\{X\in\mathbb{S}_{+}^{n+1}\,|\,{\rm rank}(X)\le 1,{\rm diag}(X)=e\}$.
  Notice that $\widehat{\Gamma}$ is a discrete set. So, there exists $\delta_1>0$ such that
  for all $X\in\mathbb{B}(\overline{X},\delta_1)$, ${\rm dist}(X,\widehat{\Gamma})=\|X\!-\!\overline{X}\|_F$.
  Since $\Gamma_{\!1}\subseteq\widehat{\Gamma}$, $\Gamma_{\!1}$ is also a discrete set in $\mathbb{S}^{n+1}$.
  Then, there exists $\delta_2\in(0,\delta_1]$ such that for all $X\in\mathbb{B}(\overline{X},\delta_2)$,
  ${\rm dist}(X,\Gamma_{\!1})=\|X\!-\!\overline{X}\|_F={\rm dist}(X,\widehat{\Gamma})$.
  Let $\Delta:=\{X\in\mathbb{S}^{n+1}\,|\,{\rm diag}(X)\!=e\}$. By Example \ref{example3.2}
  and Theorem \ref{Lcriterion2} (i), there exist $\beta\ge 0$ and $\delta_3\!>0$ such that
  \[
   {\rm dist}(X,\widehat{\Gamma})\le\beta\big[{\rm dist}(X,\Delta)+{\rm dist}(X,\Lambda_{1}^{\!+})\big]
   \quad{\rm for\ all}\ X\in\mathbb{B}(\overline{X},\delta_3).
  \]
  Take $\delta:=\min(\delta_2,\delta_3)$. From the last inequality
  it follows that for all $X\in\mathbb{B}(\overline{X},\delta)$,
  \[
  {\rm dist}(X,\Gamma_{\!1})={\rm dist}(X,\widehat{\Gamma})
  \le\beta\big[{\rm dist}(X,\Delta)+{\rm dist}(X,\Lambda_{1}^{\!+})\big]
  \le\beta\big[{\rm dist}(X,\Xi)+{\rm dist}(X,\Lambda_{1}^{\!+})\big].
 \]
 By Theorem \ref{Lcriterion2}, the mapping $\mathcal{S}_{1}$ associated to
 $\Omega=\Xi\cap\mathbb{S}_{+}^{n+1}$ is calm at $0$ for any $\overline{X}\in\Gamma_{\!1}$.
 \end{example}
 \section{Applications of calmness of $\mathcal{S}_{r}$}\label{sec4}

 We apply the calmness of $\mathcal{S}_{r}$ at $0$ to achieving several global exact
 penalties for the rank constrained problem \eqref{rank-constr} and a family of equivalent
 DC surrogates for the rank regularized problem \eqref{rank-reg}. Among others,
 the former covers the penalized problem \eqref{epenalty-constr}, the Schatten $p$-norm penalty
 in \cite{LiuLu20} and the truncated difference of $\|\cdot\|_*$ and $\|\cdot\|_F$ in \cite{MaTH17}.
 \subsection{Global exact penalties for problem \eqref{rank-constr}}\label{sec4.1}

 Recall that problem \eqref{rank-constr} is equivalent to the DC constrained optimization problem
 \begin{equation}\label{Erank-constr}
  \min_{X\in\Omega}\Big\{f(X)\ \ {\rm s.t.}\ \ \|X\|_*-\|X\|_{(r)}=0\Big\}.
 \end{equation}
 By \cite[Lemma 2.1\&Proposition 2.1]{LiuBiPan18},
 we have the following global exact penalty result.
 \begin{theorem}\label{theorem1-rconstr}
  If the mapping $\mathcal{S}_{r}$ is calm at $0$ for every $X\!\in\mathcal{S}_{r}(0)$,
  then problem \eqref{Erank-constr} is partially calm at every local optimal solution
  $X^*$, i.e., there exist $\varepsilon\!>0$ and $\overline{\rho}>0$ such that for all
  $\tau\in\mathbb{R}$ and all $X\in\mathbb{B}(X^*,\varepsilon)\cap\mathcal{S}_{r}(\tau)$,
  $f(X)-f(X^*)+\overline{\rho}[\|X\|_*-\|X\|_{(r)}]\geq 0$, and consequently,
  there exists a threshold $\overline{\rho}>0$ such that the problem \eqref{epenalty-constr}
  associated to every $\rho\ge\overline{\rho}$  has the same global optimal
  solution set as the problem \eqref{rank-constr} does.
\end{theorem}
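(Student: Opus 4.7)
The plan is to reduce the theorem to the two cited general results from \cite{LiuBiPan18}: that calmness of $\mathcal{S}_{r}$ translates into partial calmness of the DC-constrained reformulation \eqref{Erank-constr}, and that partial calmness at every local optimum yields a global exact penalty for \eqref{epenalty-constr}. The bridging ingredient is the local Lipschitz-type error bound from Theorem \ref{Lcriterion1}(i), which is equivalent to calmness of $\mathcal{S}_{r}$ at $0$.

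To derive partial calmness at a local optimum $X^*$, I would first invoke Theorem \ref{Lcriterion1}(i) to obtain constants $\gamma\ge 0$ and $\delta_1>0$ such that ${\rm dist}(X,\Gamma_{\!r})\le\gamma[\|X\|_*-\|X\|_{(r)}]$ for every $X\in\Omega\cap\mathbb{B}(X^*,\delta_1)$; note that the $\mathrm{dist}(X,\Omega)$ contribution vanishes since any $X\in\mathcal{S}_{r}(\tau)$ automatically lies in $\Omega$. The local Lipschitz continuity of $f$ on $\Omega$ provides a constant $L>0$ and radius $\delta_2>0$ with $|f(X)-f(Y)|\le L\|X-Y\|_F$ for $X,Y\in\Omega\cap\mathbb{B}(X^*,\delta_2)$. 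Let $\delta_3>0$ be a radius witnessing local optimality of $X^*$ in $\Gamma_{\!r}$, and choose $\varepsilon$ small enough that $2\varepsilon\le\min(\delta_1,\delta_2,\delta_3)$. For any $X\in\mathbb{B}(X^*,\varepsilon)\cap\mathcal{S}_{r}(\tau)$, pick $\overline{X}\in\Pi_{\Gamma_{\!r}}(X)$. Then $\|X-\overline{X}\|_F\le\gamma\tau$ and by the triangle inequality $\|\overline{X}-X^*\|_F\le 2\|X-X^*\|_F\le 2\varepsilon$, so $\overline{X}$ lies in every relevant neighborhood. Hence
\[
 f(X)-f(X^*) = \big[f(X)-f(\overline{X})\big] + \big[f(\overline{X})-f(X^*)\big] \ge -L\|X-\overline{X}\|_F + 0 \ge -L\gamma\,\tau,
\]
which is precisely partial calmness with $\overline{\rho}=L\gamma$.

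For the second part, I would appeal to \cite[Proposition 2.1]{LiuBiPan18}, which converts partial calmness at every local optimum of \eqref{Erank-constr} into the existence of a global exact penalty threshold $\overline{\rho}$ for the penalized problem \eqref{epenalty-constr}, using the coercivity of $f$ on $\Omega$ that is assumed throughout the paper. The reverse inclusion of solution sets (global solutions of \eqref{rank-constr} being global solutions of \eqref{epenalty-constr}) is automatic since feasible points have zero penalty term. The main obstacle is the globalization step: local partial-calmness constants a priori depend on the base point, and the jump to a single threshold $\overline{\rho}$ requires confining all global minimizers to a compact region via coercivity, so that a uniform constant suffices; this is the content of the cited Proposition, on which I would rely directly rather than reproducing its argument.
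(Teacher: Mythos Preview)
Your proposal is correct and follows essentially the same approach as the paper, which simply cites \cite[Lemma 2.1 \& Proposition 2.1]{LiuBiPan18} without further detail. The only difference is that you spell out explicitly the derivation of partial calmness from the local error bound of Theorem \ref{Lcriterion1}(i) combined with the local Lipschitz continuity of $f$ and local optimality of $X^*$---this is presumably the content of the cited Lemma 2.1---while for the global exact penalty conclusion you invoke the same Proposition 2.1 as the paper does.
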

 \begin{remark}\label{remark1-Epenalty}
  {\bf(a)} From Theorem \ref{theorem1-rconstr} and Section \ref{sec3.2}, we conclude that
  the problem \eqref{epenalty-constr} is a global exact penalty for the problem \eqref{rank-constr}
  with $\Omega$ from Example \ref{example3.1}-\ref{example3.7}.

  \medskip
  \noindent
  {\bf(b)} The partial calmness of problem \eqref{Erank-constr} at every local
  optimal solution implies that every local optimal solution of \eqref{rank-constr}
  is locally optimal to the problem \eqref{epenalty-constr} associated to $\rho\ge\overline{\rho}$.
  Conversely, when a local optimal solution of the problem \eqref{epenalty-constr} associated to
  any $\rho>0$ has rank not more than $r$, it must be locally optimal to the problem \eqref{Erank-constr}.
 \end{remark}

 By Theorem \ref{theorem1-rconstr} and Section \ref{sec3.2}, the following theorem
 shows that for the set $\Omega$ from Example \ref{example3.1}-\ref{example3.7},
 the Schatten $p$-norm penalty for problem \eqref{rank-constr} is a global exact one.
 When $\Omega$ is the spectral norm unit ball,
 we recover the exact penalty result in \cite{LiuLu20}.
 \begin{theorem}\label{theorem2-rconstr}
  If the problem \eqref{epenalty-constr} is a global exact penalty for
  the problem \eqref{rank-constr}, then for any $p\in(0,1)$ the following
  problem is also a global exact penalty of \eqref{rank-constr}:
  \begin{equation}\label{epenalty-Lp}
  \min_{X\in\Omega}\Big\{f(X)+\rho\textstyle{\sum_{i=r+1}^n}[\lambda_i(X)]^p\Big\}.
 \end{equation}
 \end{theorem}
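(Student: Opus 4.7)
The plan is to reduce the Schatten $p$-norm exact penalty to the already-established exact penalty of \eqref{epenalty-constr} via a uniform boundedness argument driven by the coercivity of $f$ on $\Omega$. Fix a global optimal solution $X^*$ of \eqref{rank-constr} and set $v^*\!:=\!f(X^*)$; let $\overline{\rho}>0$ be the exact penalty threshold supplied by the hypothesis for \eqref{epenalty-constr}, so that for every $\rho\ge\overline{\rho}$ and every $X\in\Omega$, $f(X)+\rho[\|X\|_*-\|X\|_{(r)}]\ge v^*$, with equality achieved on $\Gamma_{\!r}$.

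First I would exploit coercivity: the sublevel set $L\!:=\!\{X\in\Omega\,|\,f(X)\le v^*\}$ is closed and bounded, hence compact, so $M\!:=\!\sup_{X\in L}\sigma_1(X)$ is a finite positive constant. The analytic heart of the argument is the elementary concavity inequality: for every $p\in(0,1)$ and every $t\in[0,M]$,
\[
  t^p\ge M^{p-1}\,t,
\]
which follows from $(t/M)^p\ge(t/M)$ since $t/M\in[0,1]$ and $p\le 1$. Applying this coordinate-wise to $\sigma_{r+1}(X),\ldots,\sigma_n(X)$ for any $X\in L$ and summing yields
\[
  \sum_{i=r+1}^n\sigma_i(X)^p\ge M^{p-1}\bigl[\|X\|_*-\|X\|_{(r)}\bigr].
\]

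Second, I would set $\overline{\rho}_p\!:=\!\overline{\rho}\,M^{1-p}$ and show that for every $\rho\ge\overline{\rho}_p$ the two problems share the same global optimal set by two inclusions. For the forward direction, $X^*\in\Gamma_{\!r}$ makes the Schatten tail vanish, so $X^*$ attains value $v^*$ in \eqref{epenalty-Lp}; and for any $X\in\Omega$ the inequality $f(X)+\rho\sum_{i=r+1}^n\sigma_i(X)^p\ge v^*$ is trivial when $f(X)>v^*$, while for $f(X)\le v^*$ one has $X\in L$ and the tail inequality combined with the exactness of \eqref{epenalty-constr} at level $\overline{\rho}$ gives
\[
  f(X)+\rho\sum_{i=r+1}^n\sigma_i(X)^p\ge f(X)+\overline{\rho}\bigl[\|X\|_*-\|X\|_{(r)}\bigr]\ge v^*.
\]
Hence every global minimum of \eqref{epenalty-Lp} equals $v^*$ and is attained at $X^*$.

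For the reverse direction, given any global optimum $\widetilde{X}$ of \eqref{epenalty-Lp}, comparison with the feasible competitor $X^*$ and nonnegativity of the Schatten tail force $f(\widetilde{X})\le v^*$, whence $\widetilde{X}\in L$. Invoking the tail inequality once more,
\[
  f(\widetilde{X})+\overline{\rho}\bigl[\|\widetilde{X}\|_*-\|\widetilde{X}\|_{(r)}\bigr]
  \le f(\widetilde{X})+\rho\sum_{i=r+1}^n\sigma_i(\widetilde{X})^p\le v^*,
\]
and the exactness of \eqref{epenalty-constr} at $\overline{\rho}$ forces equality throughout, together with $\widetilde{X}\in\Gamma_{\!r}$ and $f(\widetilde{X})=v^*$, so $\widetilde{X}$ is a global optimum of \eqref{rank-constr}. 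The main delicate point is securing the uniform scalar $M$ that converts the concave Schatten-$p$ tail into a linear majorant of the nuclear-minus-Ky-Fan gap; this is precisely where the coercivity of $f$ on $\Omega$ enters, and absent it one could not encode the reduction into a single scalar threshold $\overline{\rho}_p$ independent of the candidate $X$.
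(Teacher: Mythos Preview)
Your proof is correct but follows a genuinely different route from the paper's. The paper argues by first letting $\overline{X}_{\!\rho}$ be a global minimizer of \eqref{epenalty-Lp} and using only the \emph{lower boundedness} of $f$ to deduce that $\rho\sum_{i=r+1}^{n}\sigma_i(\overline{X}_{\!\rho})^p$ is bounded above by $f(X^*)-c_0$; this forces $\sigma_i(\overline{X}_{\!\rho})<1$ for $i>r$ once $\rho$ exceeds some $\widehat{\rho}$. On $[0,1)$ one has the strict inequality $t^p>t$ for $t\in(0,1)$, and a sandwich comparison with the minimizer $X_{\!\rho}^*$ of \eqref{epenalty-constr} then squeezes $\sum_{i>r}[\sigma_i(\overline{X}_{\!\rho})^p-\sigma_i(\overline{X}_{\!\rho})]$ to zero, yielding $\overline{X}_{\!\rho}\in\Gamma_{\!r}$. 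Your argument instead invokes the standing \emph{coercivity} assumption to secure a uniform spectral bound $M$ on the sublevel set $L$ and linearizes the concave tail from below via $t^p\ge M^{p-1}t$ on $[0,M]$; this reduces the Schatten-$p$ penalty directly to the nuclear-minus-Ky-Fan penalty and yields an \emph{explicit} threshold $\overline{\rho}_p=\overline{\rho}M^{1-p}$, whereas the paper's threshold $\max(\overline{\rho},\widehat{\rho})$ is only implicit. Your approach also avoids having to postulate the existence of a minimizer of \eqref{epenalty-Lp} at the outset, since you establish $v^*$ as a lower bound attained by $X^*$ directly. One cosmetic point: handle the degenerate case $M=0$ (i.e.\ $L=\{0\}$) separately, since then $\overline{\rho}_p$ as written is undefined, though the conclusion is then trivial.
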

 \begin{proof}
  Pick any $p\in(0,1)$. Since the problem \eqref{epenalty-constr} is a global exact penalty
  for \eqref{rank-constr}, there exists $\overline{\rho}>0$ such that the problem \eqref{epenalty-constr}
  associated to every $\rho>\overline{\rho}$ has the same global optimal solution set as the problem
  \eqref{rank-constr} does. For each $\rho>0$, let $\overline{X}_{\!\rho}$ be a global optimal
  solution of problem \eqref{epenalty-Lp} associated to $\rho$.
  Pick a global optimal solution $X^*$ of \eqref{rank-constr}. Then,
  \[
    f(\overline{X}_{\!\rho})+\rho\textstyle{\sum_{i=r+1}^n}[\sigma_i(\overline{X}_{\!\rho})]^p
    \le f(X^*)+\rho\textstyle{\sum_{i=r+1}^n}[\sigma_i(X^*)]^p=f(X^*).
  \]
  Recall that $f$ is lower bounded. There exists a constant $c_0>-\infty$ such that
  $f(\overline{X}_{\!\rho})>c_0$. Together with the last inequality, we have
  $\rho\textstyle{\sum_{i=r+1}^n}[\sigma_i(\overline{X}_{\!\rho})]^p\le f(X^*)-c_0$,
  which implies that there exists $\widehat{\rho}>0$ such that for all $\rho>\widehat{\rho}$,
  $\sigma_i(\overline{X}_{\!\rho})<1$ for $i=r\!+\!1,\ldots,n$. Now fix any
  $\rho>\max(\overline{\rho},\widehat{\rho})$. Let $X_{\!\rho}^*$ be a global
  optimal solution of \eqref{epenalty-constr} associated to $\rho$. Then,
 \begin{align*}
  f(\overline{X}_{\!\rho})+\rho\textstyle{\sum_{i=r+1}^n}[\sigma_i(\overline{X}_{\!\rho})]^p
  &\le f(X_{\!\rho}^*)+\rho\textstyle{\sum_{i=r+1}^n}[\sigma_i(X_{\!\rho}^*)]^p\\
  &=f(X_{\!\rho}^*)+\rho\textstyle{\sum_{i=r+1}^n}\sigma_i(X_{\!\rho}^*)
  \le f(\overline{X}_{\!\rho})+\rho\textstyle{\sum_{i=r+1}^n}\sigma_i(\overline{X}_{\!\rho})
 \end{align*}
 where the first inequality is due to the feasibility of $X_{\!\rho}^*$ to \eqref{epenalty-Lp},
 the second one is by the feasibility of $\overline{X}_{\!\rho}$ to \eqref{epenalty-constr},
 and the equality is using $\sigma_i(X_{\!\rho}^*)=0$ for all $i=r+1,\ldots,n$,
 since $X_{\rho}^*$ is a global optimal solution of \eqref{rank-constr}.
 From the last inequalities, it follows that
 $\rho\textstyle{\sum_{i=r+1}^n}\big[(\sigma_i(\overline{X}_{\!\rho}))^p-\sigma_i(\overline{X}_{\!\rho})\big]\le 0$.
 Together with $0\le\sigma_i(\overline{X}_{\!\rho})<1$ for all $i=r+1,\ldots,n$,
 we deduce that $\sigma_i(\overline{X}_{\!\rho})=0$ for $i=r\!+\!1,\ldots,n$,
 and hence $\overline{X}_{\!\rho}\in\Gamma_{\!r}$. Substituting this into the last inequalities
 yields that $f(X_{\!\rho}^*)=f(\overline{X}_{\!\rho})$. This means that every global optimal
 solution of the problem \eqref{epenalty-Lp} associated to $\rho>\max(\overline{\rho},\widehat{\rho})$
 is globally optimal to \eqref{rank-constr}. In addition, it is easy to argue that every global
 solution of \eqref{rank-constr} is globally optimal to \eqref{epenalty-Lp} associated to any $\rho>0$.
 Thus, the problem \eqref{epenalty-Lp} is a global exact penalty of \eqref{rank-constr}.
\end{proof}

 For each $r\in\{1,2,\ldots,n\}$, let $H_{r}(X)\!:=\sum_{i=1}^{r-1}\sigma_i(X)+\!\sqrt{\sum_{i=r}^n\sigma_i^2(X)}$
 for $X\in\mathbb{X}$. By Lemma \ref{alemma1} in Appendix, problem \eqref{rank-constr}
 is equivalent to the DC constrained problem
 \begin{equation}\label{Erank-constr2}
  \min_{X\in\Omega}\Big\{f(X)\ \ {\rm s.t.}\ \ \|X\|_*-H_{r}(X)=0\Big\},
 \end{equation}
 a truncated difference reformulation of $\|\cdot\|_*$ and $\|\cdot\|_F$ for
 the rank constrained problem. Recently, for the least squared function $f$,
 Ma et al. \cite{MaTH17} studied the penalty problem
 \begin{equation}\label{epenalty-Nnorm}
  \min_{X\in\Omega} f(X)+\rho\big[\|X\|_*-H_{r}(X)\big],
 \end{equation}
 but did not verify its global exactness. By Theorem \ref{theorem1-rconstr} and Section \ref{sec3.2},
 the following theorem shows that it is a global exact penalty of \eqref{rank-constr}
 when $\Omega$ is from Example \ref{example3.1}-\ref{example3.7}.
 \begin{theorem}\label{theorem3-rconstr}
 For each $r\in\{1,2,\ldots,n\}$, let $\mathcal{M}_{r}\!:\mathbb{R}\to\mathbb{X}$ be
 the mapping defined by
 \[
   \mathcal{M}_{r}(\tau):=\big\{X\in\Omega\,|\,\|X\|_*\!-\!H_{r}(X)=\tau\big\}.
 \]
 Fix any $\overline{X}\!\in\Gamma_{\!r}$. Then $\mathcal{S}_{r}$ is calm at $0$
 for $\overline{X}$ iff $\mathcal{M}_{r}$ is calm at $0$ for $\overline{X}$.
 Consequently, when $\Omega$ satisfies the criterion \eqref{criterion1}
 or \eqref{criterion2}, there exists $\overline{\rho}>0$ such that the problem \eqref{epenalty-Nnorm}
 associated to every $\rho\ge\overline{\rho}$ has the same global optimal
 solution set as problem \eqref{rank-constr} does.
 \end{theorem}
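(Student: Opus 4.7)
The plan is to reduce the equivalence to a purely algebraic comparison between the two DC residuals $\|X\|_*-\|X\|_{(r)}$ and $\|X\|_*-H_{r}(X)$. Observe first that $\mathcal{S}_{r}^{-1}(0)=\mathcal{M}_{r}^{-1}(0)=\Gamma_{\!r}$: the first equality is immediate, and the second is the content of the appendix lemma used to pass from \eqref{rank-constr} to \eqref{Erank-constr2}. Consequently, the calmness characterisation \eqref{calm-ineq} shows that calmness of $\mathcal{S}_{r}$ (respectively $\mathcal{M}_{r}$) at $0$ for $\overline{X}$ is equivalent to a local Lipschitz-type error bound for $\Gamma_{\!r}$ at $\overline{X}$ whose right-hand residual is $\|X\|_*-\|X\|_{(r)}$ (respectively $\|X\|_*-H_{r}(X)$). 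The equivalence will therefore drop out as soon as these two residuals are bounded by each other by a constant independent of $X$.

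Writing $a=\sigma_r(X)$, $b=\sum_{i=r+1}^{n}\sigma_i(X)$ and $c=\sqrt{\sum_{i=r+1}^{n}\sigma_i^2(X)}$, the two residuals become $b$ and $a+b-\sqrt{a^2+c^2}$. The upper bound $a+b-\sqrt{a^2+c^2}\le b$ is immediate from $\sqrt{a^2+c^2}\ge a$. For the matching lower bound I would rationalise and use $b\ge c$ together with $\sqrt{a^2+c^2}\le a+c\le a+b$ to obtain
\[
a+b-\sqrt{a^2+c^2}=\frac{2ab+(b^2-c^2)}{(a+b)+\sqrt{a^2+c^2}}\ge\frac{ab}{a+b}.
\]
The singular-value ordering then delivers $a=\sigma_r(X)\ge\sigma_{r+1}(X)\ge b/(n-r)$, equivalently $(n-r)a\ge b$, so $\tfrac{ab}{a+b}\ge\tfrac{b}{n-r+1}$. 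Putting the estimates together produces the universal two-sided inequality
\[
\tfrac{1}{n-r+1}\bigl[\|X\|_*-\|X\|_{(r)}\bigr]\le\|X\|_*-H_{r}(X)\le\|X\|_*-\|X\|_{(r)}\quad\text{for every }X\in\mathbb{X},
\]
which transports any local error bound with one residual into one with the other, up to a factor of $n-r+1$, and thereby gives the asserted equivalence between calmness of $\mathcal{S}_{r}$ and of $\mathcal{M}_{r}$ at $0$ for $\overline{X}$.

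For the consequence, under \eqref{criterion1} or \eqref{criterion2} Theorem~\ref{Lcriterion1} or \ref{Lcriterion2} yields calmness of $\mathcal{S}_{r}$ at $0$ for every $\overline{X}\in\Gamma_{\!r}$, and by the equivalence just proved the same is true of $\mathcal{M}_{r}$. Invoking \cite[Lemma 2.1 and Proposition 2.1]{LiuBiPan18} for the DC reformulation \eqref{Erank-constr2}---exactly as was done in the proof of Theorem~\ref{theorem1-rconstr} for \eqref{Erank-constr}---then produces partial calmness of \eqref{Erank-constr2} at every local minimiser and hence the existence of a threshold $\overline{\rho}>0$ above which \eqref{epenalty-Nnorm} and \eqref{rank-constr} share the same global optimal solution set.

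The only step that requires real thought is the lower bound on $a+b-\sqrt{a^2+c^2}$. The delicate case is $\sigma_r(\overline{X})=0$, in which $a$ and $b$ both vanish along sequences approaching $\overline{X}$, so one must prevent the ratio $a/(a+b)$ from collapsing; the monotonicity-driven inequality $a\ge b/(n-r)$ is precisely what controls this quotient. Since no neighbourhood restriction enters the comparison, the resulting equivalence is fully quantitative and dimension-only, and the rest of the argument is essentially a repeat of the Theorem~\ref{theorem1-rconstr} machinery applied to \eqref{Erank-constr2}.
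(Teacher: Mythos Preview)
Your proposal is correct and follows essentially the same route as the paper: reduce the calmness equivalence to a two-sided pointwise comparison between the residuals $\|X\|_*-\|X\|_{(r)}$ and $\|X\|_*-H_r(X)$, then invoke the same partial-calmness machinery (\cite[Lemma~2.1 and Proposition~2.1]{LiuBiPan18}) for the consequence.

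The only noteworthy difference is in the lower bound constant. The paper (Lemma~\ref{alemma1}) writes the second residual as $b+\bigl[a-\sqrt{a^2+c^2}\,\bigr]$, rationalises the bracket, and uses $\sigma_i(X)/\sigma_r(X)\le 1$ to obtain the dimension-free estimate $\|X\|_*-H_r(X)\ge\tfrac{1}{2}\bigl[\|X\|_*-\|X\|_{(r)}\bigr]$. Your rationalisation of the whole expression together with $(n-r)a\ge b$ yields the weaker constant $1/(n-r+1)$. Either constant suffices for the calmness equivalence, but the paper's $\tfrac{1}{2}$ is sharper and independent of $n$ and $r$, which is worth knowing if one ever needs a quantitative version. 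Your closing remark about ``preventing the ratio $a/(a+b)$ from collapsing along sequences'' is a slight over-dramatisation: the inequality is purely pointwise (when $a=0$ one has $b=c=0$ and both residuals vanish), and no limiting argument is needed.
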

 \begin{proof}
  Suppose that $\mathcal{S}_{r}$ is calm at $0$ for $\overline{X}$.
  Then there exist $\delta>0$ and $\kappa\ge0$ such that
  \begin{equation}\label{inclusion1}
    \mathcal{S}_{r}(\tau)\cap\mathbb{B}(\overline{X},\delta)
    \subseteq \mathcal{S}_{r}(0)+\kappa|\tau|
    \quad{\rm for\ all}\ \tau\in\mathbb{R}.
  \end{equation}
  Fix an arbitrary $\tau\in\mathbb{R}$. If $\mathcal{M}_{r}(\tau)=\emptyset$,
  then the following inclusion holds for any $\gamma\ge0$:
  \begin{equation}\label{inclusion2}
   \mathcal{M}_{r}(\tau)\cap\mathbb{B}(\overline{X},\delta)
    \subseteq \mathcal{M}_{r}(0)+\gamma|\tau|.
  \end{equation}
  Now assume that $\mathcal{M}_{r}(\tau)\ne\emptyset$.
  Pick any $X\!\in\mathcal{M}_{r}(\tau)\cap\mathbb{B}(\overline{X},\delta)$.
  Clearly, $\tau=\|X\|_*-H_{r}(X)$. Observe that $X\in\mathcal{S}_{r}(\omega)\cap(\overline{X},\delta)$
  with $\omega=\|X\|_*-\|X\|_{(r)}$. From inclusion \eqref{inclusion1}, we have
  \[
    {\rm dist}(X,\mathcal{M}_{r}(0))={\rm dist}(X,\mathcal{S}_{r}(0))\le \kappa|\omega|
    =\kappa[\|X\|_*\!-\!\|X\|_{(r)}]\le2\kappa[\|X\|_*\!-\!H_{r}(X)]=2\kappa|\tau|
  \]
  where the second inequality is by Lemma \ref{alemma1}. This, along with the arbitrariness
  of $X$ in $\mathcal{M}_{r}(\tau)\cap\mathbb{B}(\overline{X},\delta)$, implies that
  $\mathcal{M}_{r}(\tau)\cap\mathbb{B}(\overline{X},\delta)\subseteq\mathcal{M}_{r}(0)+2\kappa|\tau|$.
  Together with \eqref{inclusion2} and the arbitrariness of $\tau$,
  we conclude that $\mathcal{M}_{r}$ is calm at $0$ for $\overline{X}$.
  Conversely, suppose that the mapping $\mathcal{M}_{r}$ is calm at $0$ for $\overline{X}$.
  Then, there exist $\varepsilon>0$ and $\gamma\ge 0$ such that
  \begin{equation}\label{inclusion11}
   \mathcal{M}_{r}(\tau)\cap\mathbb{B}(\overline{X},\varepsilon)
    \subseteq \mathcal{M}_{r}(0)+\gamma|\tau|
    \quad{\rm for\ all}\ \tau\in\mathbb{R}.
  \end{equation}
  Fix an arbitrary $\tau\in\mathbb{R}$. If $\mathcal{S}_{r}(\tau)=\emptyset$,
  then the following inclusion holds for any $\kappa\ge0$:
  \begin{equation}\label{inclusion21}
   \mathcal{S}_{r}(\tau)\cap\mathbb{B}(\overline{X},\varepsilon)
    \subseteq \mathcal{S}_{r}(0)+\kappa|\tau|.
  \end{equation}
  Now assume that $\mathcal{S}_{r}(\tau)\ne\emptyset$.
  Pick any $X\in\mathcal{S}_{r}(\tau)\cap\mathbb{B}(\overline{X},\varepsilon)$.
  Clearly, $\tau=\|X\|_*-\|X\|_{(r)}$. Note that $X\in\mathcal{M}_{r}(\omega)\cap(\overline{X},\varepsilon)$
  with $\omega=\|X\|_*-H_{r}(X)$. From inclusion \eqref{inclusion11}, we have
  \[
    {\rm dist}(X,\mathcal{S}_{r}(0))={\rm dist}(X,\mathcal{M}_{r}(0))\le \gamma|\omega|
    =\gamma[\|X\|_*\!-\!H_{r}(X)]\le\gamma[\|X\|_*\!-\!\|X\|_{(r)}]=\gamma|\tau|,
  \]
  where the second inequality is due to Lemma \ref{alemma1}. This, by the arbitrariness
  of $X$ in $\mathcal{S}_{r}(\tau)\cap\mathbb{B}(\overline{X},\varepsilon)$, implies that
  \(
    \mathcal{S}_{r}(\tau)\cap\mathbb{B}(\overline{X},\varepsilon)
    \subseteq \mathcal{S}_{r}(0)+\gamma|\tau|.
  \)
  Together with \eqref{inclusion21} and the arbitrariness of $\tau$,
  we conclude that the mapping $\mathcal{S}_{r}$ is calm at $0$ for $\overline{X}$.
 \end{proof}
 \subsection{Equivalent DC surrogates for problem \eqref{rank-reg}}\label{sec4.2}

 Let $\mathscr{L}$ denote the family of proper lsc convex functions $\phi$
 on $\mathbb{R}$ satisfying the conditions:
 \begin{equation*}
   {\rm int}({\rm dom}\,\phi)\supseteq[0,1],\
   t^*\!:=\mathop{\arg\min}_{0\le t\le 1}\phi(t),\ \phi(t^*)=0
   \ \ {\rm and}\ \ \phi(1)=1.
 \end{equation*}
 Many proper lsc convex functions $\phi$ belong to $\mathscr{L}$; see \cite[Appendix]{LiuBiPan18}
 for the examples. For each $\phi\in\mathscr{L}$, let $\psi\!:\mathbb{R}\to(-\infty,+\infty]$ be
 the closed proper convex function given by
 \begin{equation}\label{psifun}
   \psi(t):=\left\{\begin{array}{cl}
                   \phi(t)&{\rm if}\ t\in[0,1];\\
                   +\infty&{\rm otherwise}.
             \end{array}\right.
 \end{equation}
 Pick any $\phi\in\!\mathscr{L}$. It is easy to verify that the problem \eqref{rank-reg}
 is equivalent to the problem
 \begin{equation}\label{MPEC}
  \min_{X\in\Omega,W\in\mathbb{X}}\bigg\{f(X)+\nu\sum_{i=1}^n\phi(\sigma_i(W))
  \ \ {\rm s.t.}\ \ \|X\|_*-\langle W,X\rangle=0,\,\|W\|\le 1\bigg\}
 \end{equation}
 in the sense that if $X^*$ is a global (local) optimal solution of \eqref{rank-reg},
 then $(X^*,W^*)$ with $W^*=U_1^*V_1^*{^{\mathbb{T}}}\!+\!t^*U_2^*V_2^*{^{\mathbb{T}}}$
 is globally (locally) optimal to \eqref{MPEC}, where $U_1^*$ and $V_1^*$ are the matrix
 consisting of the first $r^*={\rm rank}(X^*)$ columns of $U^*$ and $V^*$, and $U_2^*$ and $V_2^*$
 are the matrix consisting of the last $n-r^*$ and $m-r^*$ columns of $U^*$ and $V^*$;
 and if $(X^*,W^*)$ is a global (local) optimal solution of \eqref{MPEC}, then $X^*$ is
 globally (locally) optimal to \eqref{rank-reg}. Let $\mathcal{B}$ denote the spectral norm
 unit ball in $\mathbb{X}$. Notice that $\|X\|_*\!-\!\langle W,X\rangle=0$ and $\|W\|\le 1$
 if and only if $X\!\in\!\mathcal{N}_{\mathcal{B}}(W)$. So, problem \eqref{MPEC} is
 a mathematical program with the equilibrium constraint $X\!\in\!\mathcal{N}_{\mathcal{B}}(W)$.
 In fact, when $\Omega\subseteq\mathbb{S}_{+}^n$, it reduces to
 \begin{equation*}
  \min_{X\in\Omega,W\in\mathbb{S}^n}\bigg\{f(X)+\nu\sum_{i=1}^n\phi(\lambda_i(W))
  \ \ {\rm s.t.}\ \ \langle I-W,X\rangle=0,\,0\preceq W\preceq I\bigg\}.
 \end{equation*}

 The following lemma implies that the MPEC reformulation \eqref{MPEC} for the
 problem \eqref{rank-reg}
 with $\Omega$ from Example \ref{example3.1}-\ref{example3.7} is partially calm
 at every global optimal solution.
 \begin{lemma}\label{partial-calm}
  Suppose that for each $r\in\{1,2,\ldots,n\}$ the mapping $\mathcal{S}_{r}$ is calm at $0$
  for all $X\in\Gamma_{\!r}$, and that $f+\delta_{\Omega}$ is coercive. Then,
  the MPEC problem \eqref{MPEC} is partially calm at every global optimal solution $(X^*,W^*)$,
  i.e., there exist $\delta>0$ and $\overline{\rho}>0$ such that for all $\epsilon\ge 0$
  and all $(X,W)\in\mathbb{B}((X^*,W^*),\delta)\cap\big\{(X,W)\in\Omega\times\mathcal{B}\,|\,
  \|X\|_*-\langle X,W\rangle=\epsilon\big\}$,
  \begin{equation*}
   f(X)+\!\nu\sum_{i=1}^n\phi(\sigma_i(W))
   -\Big[f(X^*)+\!\nu\sum_{i=1}^n\phi(\sigma_i(W^*))\Big]
   +\overline{\rho}\nu\Big(\|X\|_*\!-\langle X,W\rangle\Big)\ge 0.
  \end{equation*}
 \end{lemma}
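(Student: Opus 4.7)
The plan is to leverage the equivalence between \eqref{MPEC} and \eqref{rank-reg} stated just before the lemma: since $(X^*,W^*)$ is globally optimal for \eqref{MPEC}, $X^*$ is globally optimal for \eqref{rank-reg} and the spectrum of $W^*$ is pinned down as $\sigma_i(W^*)=1$ for $i\le r^*:={\rm rank}(X^*)$ and $\sigma_i(W^*)=t^*$ for $i>r^*$, so that $\Phi(W^*):=\sum_i\phi(\sigma_i(W^*))=r^*$. The key analytic tool is the von~Neumann trace inequality, which together with $\|W\|\le 1$ yields
\[
 \epsilon \;=\; \|X\|_*-\langle X,W\rangle \;\ge\; \sum_{i=1}^n\sigma_i(X)\bigl(1-\sigma_i(W)\bigr),
\]
a sum of nonnegative terms to be split according to $i\le r^*$ versus $i>r^*$.

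First I would shrink $\delta$ so that Weyl's inequality applied to $\|W-W^*\|_F,\|X-X^*\|_F\le\delta$ separates the spectrum: $\sigma_i(W)\ge 1-\delta$ and $\sigma_i(X)\ge\sigma_{r^*}(X^*)-\delta>0$ for $i\le r^*$, while $\sigma_i(W)\le t^*+\delta$ for $i>r^*$. Restricting the above sum to $i>r^*$ delivers the central estimate
\[
 \|X\|_*-\|X\|_{(r^*)}=\sum_{i>r^*}\sigma_i(X)\le C_1\,\epsilon,\qquad C_1:=\tfrac{1}{1-t^*-\delta}.
\]
Now I invoke the assumed calmness of $\mathcal{S}_{r^*}$ at $0$ for $X^*\in\Gamma_{r^*}$ (equivalently, the local error bound in Theorem~\ref{Lcriterion1}(i), with ${\rm dist}(X,\Omega)=0$) to obtain $\hat X\in\Gamma_{r^*}$ with $\|X-\hat X\|_F\le\gamma C_1\epsilon$. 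Since $\hat X\in\Omega$ and ${\rm rank}(\hat X)\le r^*$, global optimality of $X^*$ for \eqref{rank-reg} yields $f(X^*)+\nu r^*\le f(\hat X)+\nu\,{\rm rank}(\hat X)\le f(\hat X)+\nu r^*$, i.e.\ $f(X^*)\le f(\hat X)$; coupled with the local Lipschitz continuity of $f$ (uniform on a bounded enlargement of $\mathbb{B}(X^*,\delta)$, guaranteed by the coercivity of $f+\delta_\Omega$ and the a priori bound $\epsilon\le 2(\|X^*\|_*+\sqrt n\,\delta)$), this gives $f(X)\ge f(X^*)-L\gamma C_1\epsilon$.

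For the $\Phi$-piece I would restrict the same trace inequality to $i\le r^*$ to get $\sum_{i\le r^*}(1-\sigma_i(W))\le C_2\,\epsilon$ with $C_2:=(\sigma_{r^*}(X^*)-\delta)^{-1}$. Since $[0,1]\subset{\rm int}({\rm dom}\,\phi)$, $\phi$ is convex and finite on a neighbourhood of $[0,1]$, hence Lipschitz on $[0,1]$ with some constant $L_\phi$, so $\phi(\sigma_i(W))\ge 1-L_\phi(1-\sigma_i(W))$ for $i\le r^*$; meanwhile $\phi(\sigma_i(W))\ge 0=\phi(t^*)=\phi(\sigma_i(W^*))$ for $i>r^*$ because $\phi$ attains its minimum $0$ at $t^*$. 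Summing yields $\Phi(W)-\Phi(W^*)\ge -L_\phi C_2\epsilon$. Adding this to the $f$-estimate and choosing any $\overline\rho\ge L\gamma C_1/\nu+L_\phi C_2$ closes the proof. The degenerate case $r^*=0$ (i.e.\ $X^*=0$) is treated separately: $\Phi(W^*)=0$, the sum over $i\le r^*$ is vacuous and $\Phi(W)\ge 0$ automatically, while calmness of $\mathcal{S}_1$ at $X^*$ together with $\|X\|_F\le\|X\|_*\le C_1\epsilon$ finishes it.

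The main obstacle is orchestrating a single $\delta$ that simultaneously enforces the Weyl spectral separation (so that the implicit thresholding picks out exactly $r^*$ ``large'' singular values of $W$), lies inside the calmness neighbourhood of $\mathcal{S}_{r^*}$ at $X^*$, and keeps $\hat X$ inside a bounded region on which $f$ enjoys a uniform Lipschitz constant. A subsidiary subtlety is that $W^*$ is not fully determined by $X^*$ — its singular vectors inside ${\rm ker}\,X^*$ remain free — but because the whole argument only uses the singular values of $W$ (through Weyl's inequality), this nonuniqueness causes no trouble.
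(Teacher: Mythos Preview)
Your argument is correct and takes a genuinely different route from the paper's. The paper does not exploit the proximity of $W$ to $W^*$ at all; instead it immediately absorbs $W$ via
\[
\sum_{i=1}^n\phi(\sigma_i(W))+\overline\rho\big(\|X\|_*-\langle W,X\rangle\big)\ \ge\ \sum_{i=1}^n\min_{t\in[0,1]}\big\{\phi(t)+\overline\rho\,\sigma_i(X)(1-t)\big\},
\]
and then invokes \cite[Lemma~1]{LiuBiPan18} to lower bound each summand. This produces a \emph{data-dependent} cutoff rank $\overline r:=\big|\{i:\overline\rho\,\sigma_i(X)>\phi_-'(1)\}\big|$, and the comparison point is a projection of $X$ onto $\Gamma_{\overline r}$. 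Because $\overline r$ varies with $X$, the paper genuinely needs calmness of $\mathcal{S}_r$ for \emph{every} $r$ and at \emph{every} point of $\Gamma_r$, upgrading this via Theorem~\ref{gebound} (hence the coercivity hypothesis) to a global error bound on a sublevel set. Your approach, by contrast, freezes the rank at $r^*$ by using Weyl's inequality on $W$ to split the von~Neumann trace bound at the known spectral gap of $W^*$; it only requires calmness of $\mathcal{S}_{r^*}$ at the single point $X^*$ and avoids the external lemma entirely. So you trade the paper's uniform-in-$X$ penalty analysis for a more elementary perturbation argument under a weaker hypothesis.

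One simplification: your appeal to coercivity for a uniform Lipschitz constant on an enlarged ball is unnecessary. Since $X^*\in\Gamma_{r^*}$, the projection $\hat X$ automatically satisfies $\|X-\hat X\|_F={\rm dist}(X,\Gamma_{r^*})\le\|X-X^*\|_F\le\delta$, hence $\hat X\in\mathbb{B}(X^*,2\delta)\cap\Omega$, and the ordinary local Lipschitz constant of $f$ on that small ball suffices. Coercivity plays no role in your route. (Likewise, in the degenerate case $r^*=0$ you may simply take $\hat X=X^*=0$; no calmness of any $\mathcal{S}_r$ is needed there.)
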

 \begin{proof}
  Recall that $f$ is assumed to be locally Lipschitz continuous on the set $\Omega$.
  Hence, there exist $\delta'>0$ and $L_{\!f}>0$ such that for any $Z,Z'\in\mathbb{B}(X^*,\delta')\cap\Omega$,
  \begin{equation}\label{fLip}
   |f(Z)-f(Z')|\le L_{\!f}\|Z-Z'\|_F.
  \end{equation}
  Let $\alpha:=\sup_{X\in\Omega\cap\mathbb{B}(X^*,\delta')}f(X)<+\infty$, which is well defined by the continuity of $f$
  on the set $\Omega$. Then, by the coerciveness of $f+\delta_{\Omega}$,
  the set $\mathcal{L}_{\alpha}:=\{X\in\Omega\,|\,f(X)\le\alpha\}$ is compact.
  Since for each $r\in\{1,\ldots,n\}$ the mapping $\mathcal{S}_{r}$ is calm at $0$
  for all $X\in\Gamma_{\!r}$, from Theorem \ref{gebound} it follows that
  for each $r\in\{1,2,\ldots,n\}$ there exists $\kappa_r>0$ such that
  \begin{equation}\label{error-bound1}
   {\rm dist}(Z,\mathcal{S}_{r}(0))\le\kappa_r\big[\|Z\|_*-\|Z\|_{(r)}\big]
   \quad{\rm for\ all}\ Z\in\Omega\cap\mathcal{L}_{\alpha}.
  \end{equation}
  Let $\kappa\!:=\max\{\kappa_1,\ldots,\kappa_n\}$ and $\delta\!:=\!\delta'/2$.
  Take $\overline{\rho}\!:=\!\frac{\kappa\phi_{-}'(1)(1-t^*)L_{\!f}}{\nu(1-t_0)}$ where $t_0\in[0,1)$
  is such that $\frac{1}{1-t^*}\in\partial\phi(t_0)$ and its existence is due to \cite[Lemma 1]{LiuBiPan18}.
  Fix any $\epsilon\ge 0$ and pick any $(X,W)\in\mathbb{B}((X^*,W^*),\delta)\cap
  \big\{(X,W)\in\Omega\!\times\mathcal{B}\,|\,\|X\|_*-\langle X,W\rangle=\epsilon\big\}$.
  Clearly, $X\in\Omega\cap\mathcal{L}_{\alpha}$.
  Define the index set $\overline{J}:=\big\{i\,|\,\overline{\rho}\sigma_i(X)>\phi_{-}'(1)\big\}$
  and write $\overline{r}:=|\overline{J}|$.
  By invoking inequality \eqref{error-bound1}, there necessarily exists a point
  $\overline{X}\in\Pi_{\mathcal{S}_{\overline{r}}(0)}(X)$ such that
  \begin{equation}\label{Xbar-rho}
    \|X-\overline{X}\|_F\le\kappa_{\overline{r}}{\textstyle\sum_{i=\overline{r}+1}^n}\sigma_i(X).
  \end{equation}
  Let $J_1=\big\{i\,|\, \frac{1}{1-t^*}\le\overline{\rho}\sigma_i(X)\le\phi_{-}'(1)\big\}$
  and $J_2=\big\{i\,|\, 0\le\overline{\rho}\sigma_i(X)<\frac{1}{1-t^*}\big\}$.
  Notice that
  \[
   \sum_{i=1}^n\phi(\sigma_i(W))+\overline{\rho}\big(\|X\|_*-\langle W,X\rangle\big)
   \ge\sum_{i=1}^n\min_{t\in[0,1]}\Big\{\phi(t)+\overline{\rho}\sigma_i(X)(1-t)\Big\}.
  \]
  By invoking \cite[Lemma 1]{LiuBiPan18} with $\omega=\sigma_i(X)$,
  we obtain the following inequalities
  \begin{align*}
   &{\textstyle\sum_{i=1}^n}\phi(\sigma_i(W))+\overline{\rho}\big(\|X\|_*-\langle W,X\rangle\big)\\
   &\ge\overline{r}+\frac{\overline{\rho}(1\!-t_0)}{\phi_{-}'(1)(1\!-t^*)}
     \sum_{j\in J_1}\sigma_j(X)+\overline{\rho}(1-t_0)\sum_{j\in J_2}\sigma_j(X)\nonumber\\
   &\ge{\rm rank}(\overline{X})+\frac{\overline{\rho}(1\!-t_0)}{\phi_{-}'(1)(1\!-\!t^*)}
       \sum_{j\in J_1\cup J_2}\!\sigma_j(X)\\
   &\ge {\rm rank}(\overline{X})+ \frac{\overline{\rho}(1\!-t_0)}{\kappa\phi_{-}'(1)(1\!-\!t^*)}\|X\!-\!\overline{X}\|_F\\
   &\ge {\rm rank}(\overline{X})+\nu^{-1}L_{\!f}\|X\!-\!\overline{X}\|_F
    \ge {\rm rank}(\overline{X})+\nu^{-1}\big[f(\overline{X})\!-\!f(X)\big]
  \end{align*}
  where the second inequality is since $\overline{X}\in\Gamma_{\!\overline{r}}$
  and $\phi(t^*)-\phi(1)\ge\phi_{-}'(1)(t^*\!-1)$,
  the third one is due to $J_1\cup J_2=\{j\,|\,\sigma_j(X)<\sigma_{\overline{r}}(X)\}$
  and inequality \eqref{Xbar-rho}, and the last one is using
  $\|\overline{X}-X^*\|_F\le 2\|X-X^*\|_F\le\delta'$ and inequality \eqref{fLip}.
  Now assume that $\overline{X}$ has the SVD given by
  $\overline{U}{\rm Diag}(\sigma(\overline{X}))\overline{V}^{\mathbb{T}}$
  with $\overline{U}\in\mathbb{O}^{n}$ and $\overline{V}\in\mathbb{O}^m$.
  Let $\overline{W}=\overline{U}_1\overline{V}_1^{\mathbb{T}}\!+\!t^*\overline{U}_2\overline{V}_2^{\mathbb{T}}$,
  where $\overline{U}_1$ and $\overline{U}_2$ are the matrix consisting of the first $\overline{r}$
  columns and the rest $n-\overline{r}$ columns of $\overline{U}$, and $\overline{V}_1$
  and $\overline{V}_2$ are the matrix consisting of the first $\overline{r}$
  columns and the rest $m-\overline{r}$ columns of $\overline{V}$.
  Clearly, $(\overline{X},\overline{W})$ is a feasible point of the MPEC \eqref{MPEC} and
  $\sum_{i=1}^n\phi(\sigma_i(\overline{W}))={\rm rank}(\overline{X})$.
  From the last inequality, it immediately follows that
  \begin{align*}
   f(X)+\nu{\textstyle\sum_{i=1}^n}\phi(\sigma_i(W))+\overline{\rho}\nu\big(\|X\|_*\!-\!\langle W,X\rangle\big)
   &\ge f(\overline{X})+\nu{\textstyle\sum_{i=1}^n}\phi(\sigma_i(\overline{W}))\\
   &\ge f(X^*)+\nu{\textstyle\sum_{i=1}^n}\phi(\sigma_i(W^*)).
  \end{align*}
  This gives the desired inequality. The MPEC \eqref{MPEC} is partially calm at $(X^*,W^*)$.
 \end{proof}

 Now we are in a position to provide a family of equivalent DC surrogates for the rank
 regularized problem \eqref{rank-reg}, which greatly improves the result of
 \cite[Corollary 4.2]{BiPan17} where the equivalent surrogates are only achieved
 for the unitarily invariant matrix norm ball.
 \begin{theorem}\label{esurrogate-rankreg}
  If for each $r\in\{1,2,\ldots,n\}$ the mapping $\mathcal{S}_{r}$ is calm
  at $0$ for all $X\in\mathcal{S}_{r}(0)$, then there exists a threshold
  $\overline{\rho}>0$ such that the following problem associated to $\rho\ge\overline{\rho}$
  \begin{equation}\label{Equi-surrogate}
    \min_{X\in\Omega}\bigg\{f(X)+\nu\rho\Big[\|X\|_*-\rho^{-1}\sum_{i=1}^n\!\psi^*(\rho\sigma_i(X))\Big]\bigg\}
  \end{equation}
  has the same global optimal solution set as problem \eqref{rank-reg} does,
  where $\psi^*$ is the conjugate of $\psi$.
  Also, for the set $\Omega$ from Example \ref{example3.1}-\ref{example3.7},
 the assumption automatically holds.
 \end{theorem}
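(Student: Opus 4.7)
The plan is to derive \eqref{Equi-surrogate} from the MPEC reformulation \eqref{MPEC} via partial minimization in $W$, using the partial calmness supplied by Lemma \ref{partial-calm}. Three equivalences are needed: \eqref{rank-reg} matches \eqref{MPEC} (already recorded in the paragraph preceding Lemma \ref{partial-calm}); \eqref{MPEC} matches a penalized MPEC whenever $\rho$ exceeds a uniform threshold; and the penalized MPEC matches \eqref{Equi-surrogate} after optimizing out $W$.

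First, I would upgrade Lemma \ref{partial-calm} from a per-point partial calmness into a global exact penalty with a single threshold $\overline{\rho}>0$. The calmness hypothesis together with Theorem \ref{gebound} furnishes, for each $r$, an error bound constant $\kappa_r$ that is uniform on any compact subset of $\Omega$. Coercivity of $f+\delta_\Omega$ (implied by the standing coercivity of $f$ on $\Omega$) confines the global optimal set of \eqref{rank-reg} to a compact sublevel set $\mathcal{L}_{\alpha_0}$. Inspecting the proof of Lemma \ref{partial-calm}, the quantity $\overline{\rho}$ constructed there depends on the reference $X^*$ only through the sublevel parameter $\alpha$ and a local Lipschitz modulus $L_f$, both of which admit uniform bounds over $\mathcal{L}_{\alpha_0}$. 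A finite covering (or directly taking $\alpha_0$ in place of $\alpha$) yields a single threshold $\overline{\rho}>0$ such that for every $\rho\ge\overline{\rho}$, the penalized MPEC
\begin{equation*}
  \min_{X\in\Omega,\,W\in\mathcal{B}}\Big\{f(X)+\nu{\textstyle\sum_{i=1}^n}\phi(\sigma_i(W))+\rho\nu\big(\|X\|_*-\langle W,X\rangle\big)\Big\}
\end{equation*}
shares its global optimal solution set with \eqref{MPEC}.

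Second, I would eliminate $W$ from the penalized MPEC. For fixed $X\in\Omega$, von Neumann's trace inequality gives $\langle W,X\rangle\le\sum_{i=1}^n\sigma_i(W)\sigma_i(X)$ with equality attained by any $W$ sharing an SVD with $X$; since $W\in\mathcal{B}$ is equivalent to $\sigma_i(W)\in[0,1]$ for every $i$, the inner minimization over $W$ separates into $n$ scalar problems whose minima equal $-\psi^*(\rho\sigma_i(X))$ by the definition of $\psi$ in \eqref{psifun}. Substituting back reproduces precisely the objective of \eqref{Equi-surrogate}, so the global optimal $X$-components of the penalized MPEC coincide with the global optima of \eqref{Equi-surrogate}. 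Chaining the three equivalences yields the first assertion of the theorem, and the second assertion follows because Section \ref{sec3.2} has already established the calmness of $\mathcal{S}_r$ for every $r\in\{1,\ldots,n\}$ when $\Omega$ comes from any of Examples \ref{example3.1}--\ref{example3.7}.

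The principal obstacle is the first step: Lemma \ref{partial-calm} delivers partial calmness at a single global optimum with constants that a priori vary with the reference point, so the delicate task is to verify that these constants are uniformly controlled over the (compact) optimal set of \eqref{rank-reg}. Either one reruns the proof of Lemma \ref{partial-calm} with a single $\alpha_0$ independent of $X^*$, or one invokes compactness of the optimal set together with a finite covering. Once this uniform threshold is in hand the Fenchel reduction is routine, with the symmetric/PSD cases in Examples \ref{example3.2}--\ref{example3.5} and \ref{example3.8} handled by choosing $W$ to share the eigenbasis of $X$ rather than a general SVD.
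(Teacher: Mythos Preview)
Your proposal is correct and follows essentially the same three-step route as the paper: (i) pass from partial calmness of the MPEC to a uniform global exact penalty threshold, (ii) eliminate $W$ by von Neumann's inequality and the conjugate formula, and (iii) chain with the equivalence of \eqref{MPEC} and \eqref{rank-reg}. The only difference is in step (i): the paper dispatches the uniformity of $\overline{\rho}$ over all global optima by citing \cite[Proposition 2.1]{LiuBiPan18}, whereas you propose to extract it directly by rerunning the proof of Lemma \ref{partial-calm} with a single sublevel parameter (or by a compactness/covering argument); both are valid, and your direct route makes the argument self-contained at the cost of a bit more bookkeeping.
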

 \begin{proof}
  By combining Lemma \ref{partial-calm} with \cite[Proposition 2.1]{LiuBiPan18}
  and using the expression of $\psi$ in \eqref{psifun},
  there exists a threshold $\overline{\rho}>0$ such that the following penalized problem
  \[
   \min_{X\in\Omega,W\in\mathbb{X}}\Big\{f(X)+\nu\sum_{i=1}^n\psi(\sigma_i(W))
    +\rho\nu\big(\|X\|_*-\langle W,X\rangle\big)\Big\}
  \]
  associated to every $\rho\ge\overline{\rho}$ has the same global optimal solution set
  as the MPEC \eqref{MPEC} does. From the definition of conjugate functions and
  von Neumann's trace inequality, for every $X\in\Omega$ it holds that
  \(
    \sup_{W\in\mathbb{X}}\big\{\langle W,\rho X\rangle-\sum\nolimits_{i=1}^n\!\psi(\sigma_i(W))\big\}
    =\sum\nolimits_{i=1}^n\psi^*(\rho\sigma_i(X)).
  \)
  Consequently, the last penalized problem is simplified as the one in \eqref{Equi-surrogate}.
  The conclusion then follows from the equivalence between \eqref{MPEC} and \eqref{rank-reg}
  in a global sense.
 \end{proof}

 From Theorem \ref{esurrogate-rankreg} and the examples in \cite[Appendix]{LiuBiPan18},
 it follows that for the set $\Omega$ from Example \ref{example3.1}-\ref{example3.7},
 the matrix version of the capped-$\ell_1$, the SCAD and MCP, and the truncated
 $\ell_p\ (0<p<1)$ are all the equivalent DC surrogates for the problem \eqref{rank-reg}.

 \section{Conclusions}\label{sec5}

  For the composite rank constraint system $X\in\Gamma_{\!r}$, we obtained
  two criteria for identifying those closed $\Omega$ such that the associated
  partial perturbation $\mathcal{S}_r$ possesses the calmness at $0$,
  and also illustrated their practicality by a collection of common nonnegative
  and PSD composite rank constraint sets. The calmness of $\mathcal{S}_r$ was
  also used to achieve several global exact penalties for problem \eqref{rank-constr}
  and a family of equivalent DC surrogates for problem \eqref{rank-reg} involving
  more types of $\Omega$. Notice that the results in Section \ref{sec3} are easily
  extended to the mapping $\mathcal{S}_{r}(\tau)\times\Upsilon_{\!s}(\varpi)$, where
  $\Upsilon_{\!s}(\varpi)=\{S\in\Delta\,|\,\|{\rm vec}(S)\|_1-\|{\rm vec}(S)\|_{(r)}=\varpi\}$
  for $\varpi\in\mathbb{R}$ is a partial perturbation to the zero-norm constraint
  $\{S\!\in\Delta\,|\,\|{\rm vec}(S)\|_0\le s\}$. Then, for the rank plus
  zero-norm constrained or regularized problem, one can obtain the corresponding
  global exact penalties and equivalent DC surrogates. Our future work will
  explore other practical criteria to establish error bounds for more
  structured sets.


 \bigskip
 \noindent
 {\bf\large Appendix}
 \begin{alemma}\label{lemma-SVD}
  Let $G\in\!\mathbb{X}$ have the SVD as $U{\rm Diag}(\sigma(G))V^{\mathbb{T}}$.
  Then, for any $r\in\{1,2,\ldots,n\}$,
  \[
    U_1\Sigma_{r}(G)V_1^{\mathbb{T}}\in\mathop{\arg\min}_{Z\in\Lambda_{r}}\|Z-G\|_*,
  \]
  where $U_1$ and $V_1$ are the matrix consisting of the first $r$ columns of $U$ and $V$,
  respectively, and $\Sigma_{r}(G)={\rm diag}(\sigma_1(G),\ldots,\sigma_r(G))$
  with $\sigma_1(G)\ge\sigma_2(G)\ge\cdots\ge\sigma_r(G)$.
 \end{alemma}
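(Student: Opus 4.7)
The plan is to establish the lemma via a classical matching-bound argument: compute explicitly the nuclear-norm distance at the candidate minimizer and then show it is a global lower bound over $\Lambda_r$ by way of a Weyl-type singular-value inequality.

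First I would evaluate the objective at $Z^\star := U_1 \Sigma_r(G) V_1^{\mathbb{T}}$. Writing the full SVD $G = U\,\mathrm{Diag}(\sigma(G))\,V^{\mathbb{T}}$ and letting $U_2,V_2$ denote the trailing columns of $U,V$, one sees $G - Z^\star = U_2\,\mathrm{Diag}(\sigma_{r+1}(G),\ldots,\sigma_n(G))\,V_2^{\mathbb{T}}$, which is already in (a zero-padded) SVD form, so its singular values are exactly $\sigma_{r+1}(G),\ldots,\sigma_n(G)$. Hence $\|Z^\star - G\|_* = \sum_{i=r+1}^n \sigma_i(G)$, and clearly $Z^\star \in \Lambda_r$.

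Next I would prove the matching lower bound: for every $Z\in\Lambda_r$,
\[
  \|Z-G\|_* \;\ge\; \sum_{i=r+1}^n \sigma_i(G).
\]
This is where the only real ingredient from matrix perturbation theory enters. I would invoke the Weyl inequality for singular values (see, e.g., Stewart--Sun~\cite{Stewart90}): for any $A,B\in\mathbb{X}$ and any indices $i,j\ge 1$ with $i+j-1\le n$, $\sigma_{i+j-1}(A+B)\le \sigma_i(A)+\sigma_j(B)$. Applied with $A=G-Z$, $B=Z$ and $j=r+1$, the rank bound $\mathrm{rank}(Z)\le r$ forces $\sigma_{r+1}(Z)=0$, giving $\sigma_{i+r}(G)\le \sigma_i(G-Z)$ for all $1\le i\le n-r$. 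Summing over $i=1,\ldots,n-r$ yields $\sum_{i=r+1}^n \sigma_i(G)\le \sum_{i=1}^{n-r}\sigma_i(G-Z)\le \|G-Z\|_*$, which is the desired inequality.

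Combining the two steps shows $Z^\star$ attains the infimum, i.e.\ $Z^\star \in \arg\min_{Z\in\Lambda_r}\|Z-G\|_*$. There is no real obstacle here; the only point that needs care is the correct index bookkeeping in the Weyl inequality so that the rank constraint on $Z$ kills exactly the tail of its singular-value sequence, producing a bound that precisely matches the value at $Z^\star$.
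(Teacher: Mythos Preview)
Your proof is correct, but it takes a different route from the paper's own argument. The paper invokes Mirsky's perturbation theorem (\cite[IV Theorem~4.11]{Stewart90}), namely $\|Z-G\|_*\ge\|\sigma(Z)-\sigma(G)\|_1$, to reduce the matrix problem $\min_{Z\in\Lambda_r}\|Z-G\|_*$ to the vector sparsity problem $\min_{\|z\|_0\le r}\|z-\sigma(G)\|_1$, and then argues that keeping the $r$ largest entries of $\sigma(G)$ solves the latter. Your argument instead computes the candidate value $\sum_{i=r+1}^n\sigma_i(G)$ explicitly and matches it with a lower bound obtained directly from the Weyl singular-value inequality $\sigma_{i+r}(G)\le\sigma_i(G-Z)+\sigma_{r+1}(Z)=\sigma_i(G-Z)$. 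Your approach is the classical Eckart--Young--Mirsky argument and is somewhat more direct: it avoids the detour through the vector $\ell_1$ problem and needs only the pointwise Weyl bound rather than the full Mirsky inequality. The paper's route, on the other hand, makes the equivalence between the matrix and vector problems explicit, which can be convenient if one also wants to characterize \emph{all} minimizers rather than just exhibit one.
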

 \begin{proof}
  By Mirsky's theorem (see \cite[IV Theorem 4.11]{Stewart90}),
  $\|Z-G\|_*\ge\|\sigma(Z)-\sigma(G)\|_1$ for any $Z\in\mathbb{X}$.
  Then it is easy to argue that if $Z^*$ is an optimal solution of
  $\min_{Z\in\Lambda_r}\|Z-G\|_*$, then $\sigma(Z^*)$ is optimal to
  $\min_{\|z\|_0\le r}\|z-\sigma(G)\|_{1}$. Conversely,
  if $z^*$ is an optimal solution of $\min_{\|z\|_0\le r}\|z-\sigma(G)\|_{1}$,
  then $U{\rm Diag}(|z^*|^{\downarrow})V^{\mathbb{T}}$ is optimal to
  $\min_{Z\in\Lambda_r}\|Z-G\|_*$. Clearly, ${\rm diag}(\Sigma_r(G))$
  is an optimal solution of $\min_{\|z\|_0\le r}\|z-\sigma(G)\|_{1}$.
  The result then holds.
 \end{proof}
 \begin{alemma}\label{alemma1}
  Fix an integer $r\in\{1,2,\ldots,n\}$. Then, for any $X\in\mathbb{X}$,
  it holds that
  \begin{equation}\label{aim-ineq}
     \frac{1}{2}\big[\|X\|_*-\|X\|_{(r)}\big]
     \le{\textstyle\sum_{i=r}^n}\sigma_i(X)-\!\sqrt{{\textstyle\sum_{i=r}^n}\sigma_i^2(X)}
     \le\|X\|_*-\|X\|_{(r)}.
  \end{equation}
 \end{alemma}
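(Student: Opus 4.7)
The plan is to rewrite everything in terms of the singular values and reduce the inequality to an elementary one-variable estimate. Set $\sigma_i := \sigma_i(X)$, so that $\sigma_1 \geq \sigma_2 \geq \cdots \geq \sigma_n \geq 0$, and recall $\|X\|_* - \|X\|_{(r)} = \sum_{i=r+1}^n \sigma_i$. Let $a := \sigma_r$, $s := \sum_{i=r+1}^n \sigma_i$ and $q := \sum_{i=r+1}^n \sigma_i^2$. Then the middle quantity in \eqref{aim-ineq} equals $a + s - \sqrt{a^2 + q}$, and the desired inequalities become
\[
  \tfrac{1}{2}s \;\le\; a + s - \sqrt{a^2 + q} \;\le\; s.
\]

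The right-hand inequality is immediate, since $q \ge 0$ gives $\sqrt{a^2+q} \ge a$. For the left-hand inequality, after rearranging it is equivalent to $\sqrt{a^2+q} \le a + s/2$, which (both sides being nonnegative) is equivalent after squaring to $q \le a s + s^2/4$. The key point is that $\sigma_i \le \sigma_r = a$ for every $i \ge r+1$, so
\[
  q \;=\; \sum_{i=r+1}^n \sigma_i^2 \;\le\; \sigma_r \sum_{i=r+1}^n \sigma_i \;=\; a\,s,
\]
and $a s \le a s + s^2/4$ trivially; this closes the estimate.

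There is no real obstacle here: once the problem is recast using $a, s, q$ as above, both inequalities collapse to elementary bounds, with the ordering $\sigma_r \ge \sigma_{r+1} \ge \cdots \ge \sigma_n$ supplying the only nontrivial ingredient. One small bookkeeping point is the degenerate case $s = 0$ (equivalently $\mathrm{rank}(X) \le r$), where all three expressions in \eqref{aim-ineq} vanish and the inequalities hold trivially; this is already covered by the argument above without a separate case split.
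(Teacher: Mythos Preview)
Your proof is correct and rests on the same key estimate as the paper's: using $\sigma_i\le\sigma_r$ for $i\ge r+1$ to bound $q=\sum_{i=r+1}^n\sigma_i^2\le \sigma_r\sum_{i=r+1}^n\sigma_i=as$. The only difference is algebraic packaging: the paper rationalizes $\sigma_r-\sqrt{\sigma_r^2+q}$ and then bounds the resulting fraction (which requires dividing by $\sigma_r$ and hence a separate case for $\sigma_{r+1}(X)=0$), whereas you rearrange to $\sqrt{a^2+q}\le a+s/2$ and square. Your route is slightly cleaner precisely because it never divides by $a$, so the degenerate case $s=0$ (or even $a=0$) needs no special treatment.
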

 \begin{proof}
  Fix any $X\in\mathbb{X}$. Since $\sqrt{{\textstyle\sum_{i=r}^n}\sigma_i^2(X)}\!\ge\sigma_r(X)$,
  it immediately follows that
  \(
    {\textstyle\sum_{i=r}^n}\sigma_i(X)-\sqrt{{\textstyle\sum_{i=r}^n}\sigma_i^2(X)}
    \le{\textstyle\sum_{i=r+1}^n}\sigma_i(X)=\|X\|_*-\|X\|_{(r)},
  \)
  and the second inequality in \eqref{aim-ineq} holds.
  Next we prove that the first inequality in \eqref{aim-ineq} holds by two cases.

  \noindent
  {\bf Case 1: $\sigma_{r+1}(X)\ne 0$.} Recall that $\sigma_1(X)\ge\sigma_2(X)\ge\cdots\ge\sigma_n(X)$.
  Therefore,
  \begin{align*}
   {\textstyle\sum_{i=r}^n}\sigma_i(X)\!-\!\sqrt{{\textstyle\sum_{i=r}^n}\sigma_i^2(X)}
   &=\sum_{i=r+1}^n\sigma_i(X)+\Big[\sigma_r(X)\!-\!\sqrt{\sigma_r^2(X)+{\textstyle\sum_{i=r+1}^n}\sigma_i^2(X)}\Big]\\
   &=\sum_{i=r+1}^n\sigma_i(X)-\frac{{\textstyle\sum_{i=r+1}^n}\sigma_i^2(X)}{\sigma_r(X)\!+\!\sqrt{\sigma_r^2(X)\!+\!{\textstyle\sum_{i=r+1}^n}\sigma_i^2(X)}}\\
   &
   \ge\sum_{i=r+1}^n\sigma_i(X)-\frac{1}{2}\sum_{i=r+1}^n\frac{\sigma_i(X)}{\sigma_r(X)}\sigma_i(X)\\
   &\ge\sum_{i=r+1}^n\sigma_i(X)-\frac{1}{2}\sum_{i=r+1}^n\sigma_i(X)
   =\frac{1}{2}\big[\|X\|_*-\|X\|_{(r)}\big],
  \end{align*}
  where the last inequality is due to $\sigma_i(X)/\sigma_r(X)\le 1$ for $i=r\!+\!1,\ldots,n$.

  \medskip
  \noindent
  {\bf Case 2: $\sigma_{r+1}(X)=0$.} Now $\|X\|_*-\|X\|_{(r)}=0$
  and ${\textstyle\sum_{i=r}^n}\sigma_i(X)\!-\!\sqrt{{\textstyle\sum_{i=r}^n}\sigma_i^2(X)}=0$.
  Then, it is immediate to have $\frac{1}{2}\big[\|X\|_*-\|X\|_{(r)}\big]
     \le{\textstyle\sum_{i=r}^n}\sigma_i(X)\!-\!\sqrt{{\textstyle\sum_{i=r}^n}\sigma_i^2(X)}$.
 \end{proof}

 \end{document}